\setlist[enumerate,1]{label=\normalfont(\roman*)}
\declaretheorem[numberwithin=section]{theorem}
\declaretheorem[sibling=theorem]{proposition}
\declaretheorem[sibling=theorem]{corollary}
\declaretheorem[sibling=theorem]{lemma}
\declaretheorem[sibling=theorem]{conjecture}
\declaretheorem[sibling=theorem,style=definition]{definition}
\declaretheorem[numbered=no,style=remark]{remark}
\declaretheorem[numbered=no,style=remark]{remarks}
\declaretheorem[numbered=no, name=Claim]{claim*}
\declaretheorem[name=Theorem, Refname={Theorem,Theorems}]{thmintro}
\declaretheorem{claim}
\numberwithin{equation}{section}
\DeclareMathOperator{\id}{id}
\DeclareMathOperator{\Rat}{Rat}
\DeclareMathOperator{\ARat}{\overline{Rat}}
\newcommand{\cF}{\mathcal{F}}
\newcommand{\cG}{\mathcal{G}}
\newcommand{\cN}{\mathcal{N}}
\newcommand{\cS}{\mathcal{S}}
\newcommand{\sT}{\mathscr{T}}
\newcommand{\sH}{\mathscr{H}}
\newcommand{\C}{\mathbb{C}}
\newcommand{\D}{\mathbb{D}}
\newcommand{\N}{\mathbb{N}}
\newcommand{\R}{\mathbb{R}}
\newcommand{\T}{\mathbb{T}}
\newcommand{\Z}{\mathbb{Z}}
\newcommand{\CC}{\mathbb{\hat{C}}}
\newcommand{\cA}{\mathcal{A}}
\newcommand{\cC}{\mathcal{C}}
\newcommand{\cO}{\mathcal{O}}
\newcommand{\cT}{\mathcal{T}}
\newcommand{\TT}{\mathsf{T}}
\newcommand{\QC}{QC_0}
\newcommand{\Homeo}{\operatorname{Homeo}}
\newcommand{\Mod}{\operatorname{Mod}}
\newcommand{\Mob}{\textnormal{Möb}}
\newif\ifcomments
\newcommand{\red}[1]{
  \ifcomments{\textcolor{red}{#1}}\fi
}
\newcommand{\antiThurston}{(anti-)\allowbreak{}Thurston\ }
\newcommand{\antiRational}{(anti-)\allowbreak{}rational\ }
\subjclass[2010]{Primary 37F20, 37F10; Secondary 30D05}
\keywords{Thurston maps, Thurston's pullback map, moduli correspondence, curve attractor, invariant graphs, tilings.}
\keywords{Rational and anti-rational maps, Thurston and anti-Thurston maps,  decomposition theory, critically fixed maps, classification problem, global curve attractor problem, twisting problem.}
\begin{document}
\title{Classification of critically fixed anti-Thurston maps}
\author{Lukas Geyer}
\address{Montana State University \\ Department of Mathematical
  Sciences \\ Bozeman, MT 59717--2400 \\ USA}
\email{geyer@montana.edu}
\author{Mikhail Hlushchanka}
\address{Korteweg-de Vries Instituut voor Wiskunde, Universiteit van Amsterdam,  1090 GE \newline Amsterdam, The Netherlands}
\address{Mathematisch Instituut,  Universiteit Utrecht,
3508 TA Utrecht,  The Netherlands}
\email{mikhail.hlushchanka@gmail.com}

\thanks{This material is based partially upon work supported by the
  National Science Foundation under Grant No.\ DMS-1928930 for the
  first author and Grant No.\ 1440140 for the second author, while the
  authors participated in a program hosted by the Mathematical
  Sciences Research Institute in Berkeley, California, during the
  Spring semester of 2022. The second author was also partially
  supported by the Marie Skłodowska-Curie Postdoctoral Fellowship
  under Grant No.\ 101068362.}

\begin{abstract}
  We provide a complete combinatorial classification of critically fixed
  anti-Thurston maps, i.e., orientation-reversing branched covers of
  the $2$-sphere that fix every critical point. The first step in the
  proof, and an interesting result in its own right, is a
  combinatorial classification of critically fixed anti-rational maps as ``Schottky
  maps'' associated to certain plane graphs. Both of these
  classification results heavily rely on an orientation-reversing
  version of 
  Thurstons's theory, including the canonical
  decomposition of anti-Thurston maps, which we develop in this
  paper. Lastly, we give some applications to the global curve attractor and twisting problems, as well as to anti-rational maps with
  symmetries and to critically fixed anti-polynomials.
\end{abstract}

\maketitle

\setcounter{tocdepth}{1}
\tableofcontents

\section{Introduction}
\label{sec:introduction}
\subsection{Overview}
The main subject of this paper is orientation-reversing
complex (holomorphic) dynamics. The prefix ``anti-'' (as in
``anti-holomorphic'', ``anti-conformal'', or ``anti-Thurston map'')
will always refer to orientation-reversing versions of familiar concepts
from complex analysis and complex dynamics. E.g., an
\emph{anti-rational} map is the complex conjugate of a rational
map. Many of the results in holomorphic dynamics also hold in
the anti-holomorphic setting, but there are several subtle and
not-so-subtle differences, see
Section~\ref{sec:revi-anti-holom}. In addition to exploring anti-holomorphic dynamics as an 
extension of complex dynamics, this subject also 
has intriguing connections to Kleinian groups
\cite{lazebnikUnivalentPolynomialsHubbard2021,
  lodgeDynamicalGasketsGenerated2019, lodgeCirclePackingsKissing2022}
and to the theory of gravitational lensing
\cite{khavinsonFundamentalTheoremAlgebra2008}.

The two main results in this paper are the complete combinatorial
classifications of \emph{critically fixed anti-rational maps} and
\emph{critically fixed anti-Thurston maps}, which are anti-rational maps and anti-Thurston maps that fix each of their critical points. The analogous classification problems in
the orientation-preserving setting were studied and resolved by the second author in
\cite{hlushchankaTischlerGraphsCritically2019} and
\cite{hlushchankaCriticallyFixedThurston2022}.  Further partial
results in this direction were obtained in
\cite{geyerSharpBoundsValence2008} and
\cite{lazebnikUnivalentPolynomialsHubbard2021} in the anti-polynomial case. A similar, closely related, and ultimately equivalent classification
of critically fixed anti-rational maps was independently obtained in
\cite{lodgeCirclePackingsKissing2022}.

As a major technical tool, we develop an orientation-reversing version
of the theory of Thurston maps, including analogs of Thurston's characterization of rational maps 
and the canonical decomposition of obstructed maps. The specialization of this theory to the setting of critically fixed maps is a crucial ingredient in our classification results. We also provide several applications, for example, to the global curve attractor and twisting problems in our setting. 

\subsection{Classification of critically fixed anti-rational maps}

Given a critically fixed anti-rational map $f:\CC\to\CC$, its
(\emph{alternating}) \emph{Tischler graph} $T_f$ is the plane graph in $\CC$
whose edges are the fixed internal rays in the immediate superattracting basins of $f$ and
whose vertices are the endpoints of these rays. We named this graph in analogy with the \emph{Tischler graphs} of critically fixed rational maps studied in \cite{cordwellClassificationCriticallyFixed2015,hlushchankaTischlerGraphsCritically2019}, see also \cite{tischlerCriticalPointsValues1989}. 

In
\Cref{thm:characterization-1}, we provide the following properties of the Tischler graph $T_f$:
\begin{enumerate}
\item $T_f$ is a connected bipartite plane graph.
\item The vertices of $T_f$ are exactly the fixed points of $f$, and
  each edge of $T_f$ connects a critical fixed point to a repelling
  fixed point.
\item Every repelling vertex of $T_f$ has degree 2, and every
  critical vertex has degree $m+2\ge 3$, where $m$ is the multiplicity
  of the critical point.
\item $T_f$ has exactly $d+1$ faces, where $d$ is the degree of $f$.
\item Every face $A$ of $T_f$ is a Jordan domain, and $f$ maps $A$
  anti-conformally onto $\CC \setminus \overline{A}$. 
\end{enumerate}

The \emph{reduced Tischler graph} of $f$ is the plane (multi-)graph obtained from the Tischler graph $T_f$ by removing all  its vertices of degree $2$ and joining the respective incident edges. These reduced graphs may have multiple edges, but no loops. As topological analogs of these, we define \emph{reduced topological Tischler
  graphs} to be connected plane (multi-)graphs $T \subset S^2$ such that all of their vertices have degree $\ge 3$ 
  and such that each of their faces is a 
  Jordan domain. To any reduced topological Tischler graph $T$, we associate a
\emph{Schottky map} $f_T: S^2\to S^2$ as a map whose restriction to the closure $\overline{A}$ of every face $A$
of $T$ is a \emph{topological reflection} in its boundary $\partial A$ (see
\Cref{def:top-reflection,def:schottky-map}). Equivalently, the map $f_T$ is set to be the identity on the graph $T$, 
and it should map each face $A$ homeomorphically onto $S^2\setminus A$.
In particular, $f_T$ is an
orientation-reversing branched covering map on the 2-sphere whose critical points are exactly the vertices of $T$, and thus they are fixed pointwise by the map. In other words, $f_T$ is a critically fixed anti-Thurston map.

The name ``Schottky map'' is obviously chosen by analogy with Schottky
groups. There is indeed a rigorous connection between Schottky maps
and associated Kleinian reflection groups; see
\cite{lodgeDynamicalGasketsGenerated2019} and
\cite{lodgeCirclePackingsKissing2022}.  The constructions in these
papers are closely related, but not quite identical, to our
constructions. Instead of Tischler graphs, the authors
focus on their dual graphs and associated circle packings.

In \Cref{sec:plane-graphs-schottk}, we prove the following first main result of this paper.

\begin{thmintro}
\label{thm:a}
There is an explicit and canonical one-to-one correspondence between
the following sets:
\begin{itemize}
    \item the set of Möbius conjugacy classes of critically
    fixed anti-rational maps;
    \item the set of isomorphism classes of unobstructed reduced 
    topological Tischler graphs;
        \item the set of equivalence classes of realizable
    Schottky maps.
\end{itemize}
\end{thmintro}

Here we say that a reduced topological Tischler graph $T$ is \emph{unobstructed} if any two distinct faces of $T$ share at most one boundary edge. (This property is equivalent to the requirement that the dual graph of $T$ has no multiple edges.) Two topological Tischler graphs $T$ and $T'$
are \emph{isomorphic} if there is an orientation-preserving homeomorphism of
the sphere mapping $T$ onto $T'$. Equivalence of Schottky maps is
defined as combinatorial equivalence for anti-Thurston maps, see \Cref{def:Thurston-equiv}. Roughly speaking, two Schottky maps are \emph{equivalent} if they are conjugate up to isotopy relative to their critical points. Finally, a Schottky map is called \emph{realizable} if it is combinatorially equivalent to a (critically fixed) anti-rational map. 

We use \Cref{thm:a} to provide a combinatorial classification of critically fixed anti-polynomial maps in \Cref{sec:crit-fixed-anti-poly}. 
Another useful consequence is that symmetries of unobstructed reduced topological Tischler
graphs are inherited by the equivalent anti-rational maps; see \Cref{thm:symmetries}.

  

\subsection{Classification of critically fixed anti-Thurston maps} Not all critically fixed anti-Thurston maps are combinatorially equivalent to a Schottky map; see \Cref{subsec:non-schottky-ex} for a non-Schottky example. In light of this, we introduce more general combinatorial models of critically fixed anti-Thurston maps, which we call \emph{multi-Schottky maps}. Here, we only provide an informal description of these models, and refer the reader to \Cref{subsec:multi-schottky} for the precise definition. 

Let $\sT=\{S^2_t\}_t$ and $\sH=\{S^2_h\}_h$ be two finite collections
of (disjoint) 2-spheres with $\#\sT \geq 1$ and $\#\sH \geq 0$. We
make the following assumptions:
\begin{itemize}
\item every sphere $S^2_t$ contains an unobstructed reduced
  topological Tischler graph $T_t$;
\item every sphere $S^2_h$ contains a plane graph $H_h$ with at least
  2 connected components, each of which must simply be the closure of
  an edge;
\item each sphere $S^2_h$ comes with an orientation-reversing
  self-homeomorphism $\xi_h$ that fixes the graph $H_h$ pointwise.
\end{itemize}

We now glue these spheres together to form a topological space $\cS$
while respecting the following conditions:
\begin{enumerate}
\item\label{item: S-i} Whenever two spheres are glued together, one of
  them must be from $\sT$ and the other one must be from
  $\sH$. Furthermore, the gluing happens along (the closure of)
  exactly one edge in each of the respective graphs. The image of
  these closed arcs in $\cS$ is called a \emph{seal}.
\item\label{item: S-ii} For all $S^2_h\in \sH$, every edge of $H_h$
  must be used in the gluing process (to form a seal).
\item\label{item: S-iii} The resulting space $\cS$ must be connected,
  and the removal of any seal from $\cS$ must split it into exactly
  two connected components.
\end{enumerate}
Roughly speaking, the resulting space $\cS$ resembles a tree-like
structure formed by $2$-spheres that are glued to each other along
closed arcs.

\begin{figure}[t]
  \centering
  \includegraphics[width=.9\textwidth]{multi-tischler.jpeg}
  \caption{Constructing a multi-Tischler graph. On the left picture,
    each of the three gray spheres contains an unobstructed reduced
    topological Tischler graph, and the blue sphere contains a graph
    formed by three disjoint edges. The picture in the middle shows
    the topological space $\cS$ obtained by gluing these four spheres
    along the red edges. Finally, the picture on the right illustrates
    the $2$-sphere obtained by opening up each seal in $\cS$ to a
    Jordan curve (in red) and the resulting multi-Tischler graph.}
  \label{fig:multi-tischler}
\end{figure}

Let us now ``open up'' each seal in $\cS$ to a Jordan curve; we refer
the reader to \Cref{fig:multi-tischler} for an illustration. One can
check that the resulting space is actually a $2$-sphere. We denote by
$M$ the plane graph in this sphere induced by the graphs $T_t\subset
S^2_t$; we call this graph a \emph{reduced multi-Tischler graph}. Note
that each edge of $T_t$ that forms a seal in $\cS$ is doubled in the
graph $M$. Moreover, spheres $S^2_h\in \sH$ are in (canonical)
bijective correspondence with multiply connected faces $A_h$ of
$M$. Clearly, each homeomorphism $\xi_h: S^2_h\to S^2_h$ induces an
orientation-reversing self-homeomorphism $\mu_h$ of the respective
closure $\overline{A_h}$, such that $\mu_h$ acts as an involution on
each boundary component of $A_h$. We denote by $M^\mu$ the union of the
closures of all multiply connected faces of $M$, and by $\mu: M^\mu\to
M^\mu$ the orientation-reversing homeomorphism that agrees with
$\mu_h$ on each multiply connected face $A_h$.

We call the pair $(M,\mu)$ constructed above a \emph{reduced
  multi-Tischler pair}. Now we define a self-map $f=f_{(M,\mu)}$ of
the underlying $2$-sphere in the following way. First, we require that
$f$ fixes the vertex set $V(M)$ of $M$ pointwise.  Next, given an edge
$e$ of $M$, we set $f|_e=\mu|_e$ if $e\subset \partial M^\mu$, and
$f|_e = \id_e$ otherwise. Finally, for every multiply connected face $A$
of $M$, we define $f|_A=\mu|_A$, and for each simply connected face
$A$ we define $f|_A$ to be an orientation-reversing homeomorphism
extending the boundary map $f:\partial A \to f(\partial A)$. Note
that, in the latter case, since $\partial A$ and $f(\partial A)$ are
Jordan curves, $f|_{\overline{A}}$ is uniquely defined up to isotopy rel.\
$\partial A$ by the Alexander trick (see \Cref{thm:alexander}). It is
straightforward to check that $f$ is an orientation-reversing branched
covering map with $C_f=V(M)$, and thus a critically fixed
anti-Thurston map. We call such a map $f$ a \emph{multi-Schottky map}
associated to the reduced multi-Tischler pair $(M,\mu)$.

With this setup, we can now state our second main result.

\begin{thmintro}
  \label{thm:b}
There is a canonical one-to-one correspondence between the following sets:
\begin{itemize}
    \item the set of combinatorial (resp.\ isotopy) equivalence classes of critically
    fixed anti-Thurston maps;
    \item the set of equivalence (resp.\ isotopy) classes of reduced multi-Tischler pairs;
        \item the set of equivalence (resp.\ isotopy) classes of multi-Schottky maps.
\end{itemize}
\end{thmintro}

Here, equivalence (resp.\ isotopy) of multi-Schottky maps is defined as combinatorial (resp.\ isotopy) equivalence of anti-Thurston maps. Furthermore, two multi-Tischler pairs $(M,\mu)$ and $(M',\mu')$ are called \emph{equivalent} if there is an orientation-preserving homeomorphism $\phi$ of the underlying sphere that maps $M$ onto $M'$ and such that $\phi^{-1}\circ \mu' \circ \phi$ is isotopic to $\mu$ on $M^\mu$ rel.\ $M^\mu\cap V(M)$. If, in addition, $\phi$ is isotopic to the identity rel.\ $ V(M)$, we say that the pairs $(M,\mu)$ and $(M',\mu')$ are \emph{isotopic}.

A crucial ingredient in the proof of \Cref{thm:b} is the description of canonical decompositions of critically fixed anti-Thurston maps; see \Cref{cor:can_crit_fix_decomposition}. A decomposition theory for general Thurston maps was developed by Pilgrim in \cite{pilgrimCanonicalThurstonObstructions2001,pilgrimCombinationsComplexDynamical2003}. This theory was further specialized to the setting of critically fixed Thurston maps by the second author and Prochorov in \cite{hlushchankaCriticallyFixedThurston2022}. In this paper, we work out the relevant details in the orientation-reversing setting. In upcoming joint work with Lodge, Luo, and Mukherjee \cite{GeyerAntiThurstonAndReflectionGroups}, we provide an explicit relation between canonical decompositions of (Schottky) anti-Thurston maps with geometric decompositions of (Schottky) pared 3-manifolds. In this way, we add a ``decomposition theory'' link to the \emph{Sullivan dictionary} \cite{SullivanQuasiconformal1985}, connecting the theory of
Kleinian groups and rational dynamics.

\subsection{Structure of the paper}
The structure of this paper is as
follows. Sections~\ref{sec:preliminaries}--\ref{sec:plane-graphs} are
devoted to the discussion of necessary background. We introduce the
basic notation used throughout this paper in \Cref{sec:preliminaries}
and provide a review of anti-holomorphic dynamics in
\Cref{sec:revi-anti-holom}. In \Cref{sec:thurst-theor-orient}, we
briefly review Thurston's theory of branched covers of the $2$-sphere
and extend it to the orientation-reversing setting. In particular, we
prove an orientation-reversing version of Thurston's characterization
of rational maps (see \Cref{thm:anti-thurston}), as well as
discuss the orientation-reversing versions of canonical obstructions and decompositions in
\Cref{subsec:canon-obstr,subsec:canon-decomp}. Lastly, we introduce
the necessary terminology and results from the theory of plane graphs
in \Cref{sec:plane-graphs}.

In Section~\ref{sec:crit-fixed-anti-rational}, we define Tischler
graphs of critically fixed anti-rational maps and describe their
properties in \Cref{thm:characterization-1}. Following this, in
Section~\ref{sec:plane-graphs-schottk}, we introduce topological
Tischler graphs and prove our combinatorial classification for
critically fixed anti-rational maps (see
\Cref{thm:tischler-fixed-correspondence,thm:crit-fix-rational-classification}). In
Section~\ref{sec:crit-fixed-anti-poly}, we use this to characterize
critically fixed anti-polynomials via \emph{topological Tischler
  trees}, which can be thought of either as topological versions of
the respective Tischler graphs with the vertex at $\infty$ removed, or
as an extension of Hubbard trees for polynomials.

In \Cref{sec:decomp-crit-fix}, we discuss special properties of completely invariant multicurves and respective decompositions for (marked) critically fixed (anti-)Thurston maps and, in particular, of canonical obstructions and respective canonical decompositions (see \Cref{cor:can_crit_fix_decomposition}). We also construct a non-Schottky critically fixed anti-Thurston map in \Cref{subsec:non-schottky-ex}. In Section~\ref{sec:multi-schottky}, we start with providing formal definitions of multi-Tischler pairs and associated multi-Schottky maps, and afterward prove our combinatorial classification of critically fixed anti-Thurston maps (see \Cref{thm:class-crit-fix-anti-thurst}).

Finally, Section~\ref{sec:exampl-appl} contains applications of our
results to the global curve attractor problem for critically fixed rational maps (\Cref{subsec:glob-curve-attr}) and to the twisting problem  
for Schottky maps (\Cref{subsec:twisting}). We also provide a census of critically fixed rational maps of small degrees in \Cref{subsec:census}, and an application to anti-rational maps with symmetries in \Cref{subsec:maps-with-symmetries}.

\subsection{Acknowledgments}
Discussions, feedback, and encouragement from many people considerably
improved this paper. Special thanks are due to Russell Lodge, Yusheng
Luo, Chris McKay, Sabyasachi Mukherjee, and Palina Salanevich. We are also indebted to
the organizers and other participants of the MSRI special program on
Complex Dynamics in Spring~2022. In addition, the first author extends
his gratitude to the Department of Mathematical Sciences at Montana
State University (in particular to department head Beth Burroughs) and
the second author to the Department of Mathematics at Utrecht
University (in particular to department head Jason Frank) for the
support of their visit to MSRI.


\section{Notation and preliminaries}
\label{sec:preliminaries}

We use the notation $S^2$ for the topological $2$-sphere and $\CC$ for
the Riemann sphere, equipped with the standard complex structure. We
denote the disjoint union of sets $A$ and $B$ by $A \sqcup B$, the
identity map on a set $X$ by $\id_X$, the restriction of a map $f$ to
a set $X$ by $f|_X$, and the $n$-th iterate of a map $f$ by $f^n = f
\circ f \circ \ldots \circ f$. Given an equivalence relation on a set
$X$ and an element $x\in X$, we write $[x]$ for the equivalence class
of $x$ as usual.

In this paper, by ``analytic'' we will always mean ``complex
analytic'', and an \emph{anti-analytic} function of the complex
argument $z$ is an analytic function of $\bar{z}$, defined in some
open subset of $\C$. In the case where the domain and/or codomain
contains $\infty$, analyticity and anti-analyticity are defined in the
usual way, using the coordinate $w=1/z$ in a neighborhood of
$\infty$. We will use the notation $\partial f$ and $\bar{\partial} f$
for the Wirtinger derivatives $\frac{\partial f}{\partial z} = \frac12
\left(\frac{\partial f}{\partial x} - i \frac{\partial f}{\partial y}
\right)$ and $\frac{\partial f}{\partial \bar{z}} = \frac12
\left(\frac{\partial f}{\partial x} + i \frac{\partial f}{\partial y}
\right)$ of a quasiconformal map $f$ of the complex argument $z=x+iy$.

As usual, by a \emph{plane curve} we mean the image of a continuous parameterization map $\eta: I \to S^2$, where $I$ is an interval in $\R$. We say that two plane curves $\gamma_1$ and $\gamma_2$ \emph{intersect
transversely at a point $p\in \gamma_1\cap \gamma_2$} if $p$ is an isolated point in $\gamma_1\cap \gamma_2$ and if $\gamma_1$ locally separates $\gamma_2$ at $p$, and vice versa. (Less formally, this means that  $\gamma_1$ and $\gamma_2$ actually cross each other at $p$.) We say that the plane curves $\gamma_1$ and $\gamma_2$ \emph{intersect transversely} (or that they have \emph{transverse intersections}) if $\gamma_1\cap\gamma_2$ is finite and if $\gamma_1$ and $\gamma_2$ intersect transversely at each point $p\in \gamma_1\cap \gamma_2$.

For a finite set $Q\subset S^2$, we denote by $\Homeo^+(S^2,Q)$ the
set of all orientation-preserving homeomorphisms of $S^2$ that fix $Q$
pointwise, and we write $\Homeo^+_0(S^2,Q)$ for those elements of
$\Homeo^+(S^2,Q)$ that are isotopic to the identity on $S^2$ relative
to $Q$.

We will need the following elementary, but very useful result.
\begin{theorem}[Alexander trick]
  \label{thm:alexander}
  Let $B^n$ be the closed unit ball in $\R^n$, and let $S^{n-1} =
  \partial B^n$ be its boundary sphere. Then the following statements are
  true for all $n\geq 1$:
  \begin{enumerate}
  \item\label{item:alexander_1} Every homeomorphism
    $f: S^{n-1} \to S^{n-1}$ can be extended to a homeomorphism
    $F:B^n \to B^n$.
  \item\label{item:alexander_2} If $F_0, F_1: B^n \to B^n$ are
    homeomorphisms with $F_0 = F_1$ on $S^{n-1}$, then $F_0$ and $F_1$
    are isotopic relative to $S^{n-1}$.
  \item\label{item:alexander_3}
    If $F_0,F_1: B^n \to B^n$ are homeomorphisms extending two
    isotopic homeomorphisms $f_0,f_1:S^{n-1} \to S^{n-1}$, then $F_0$
    and $F_1$ are isotopic.
  \end{enumerate}
\end{theorem}
\begin{proof}
  A proof of at least \ref{item:alexander_1} and
  \ref{item:alexander_2} can be found in many places, e.g.,
  \cite[Appendix~C2]{hubbardTeichmullerTheoryApplications2016}.  While
  \ref{item:alexander_3} is not explicitly proved in this reference,
  the methods used to prove \ref{item:alexander_1} and
  \ref{item:alexander_2} easily yield it as well. Namely, it follows
  from \ref{item:alexander_2} that any two homeomorphic extensions of
  each $f_k:S^{n-1}\to S^{n-1}$, $k\in\{0,1\}$, are isotopic relative
  to $S^{n-1}$. So without loss of generality we may assume that each
  homeomorphism $F_k: B^n\to B^n$ is the \emph{radial extension} of
  $f_k$, that is, $F_k(rx) = r f_k(x)$, where $x \in S^{n-1}$ and
  $r \in [0,1]$ (which is used to prove \ref{item:alexander_1}). For
  these, given an isotopy $(f_t)_{t \in [0,1]}$ from $f_0$ to $f_1$,
  the radial extensions $F_t(rx) = r f_t(x)$, $t\in[0,1]$, provide an
  isotopy from $F_0$ to $F_1$.
\end{proof}
\begin{remark}
  This theorem is obviously still true if we replace $B^n$ by a
  homeomorphic set $D$, with $S^{n-1}$ replaced by $\partial D$. We
  will need it in the case where $D$ is the closure of a Jordan
  domain in $S^2$.
\end{remark}


\section{Review of anti-holomorphic dynamics}
\label{sec:revi-anti-holom}
In this section, we summarize a few basic definitions and results from anti-holomorphic dynamics.
Most of the previous work in this area has focused on the iteration of anti-polynomials, 
particularly on studying the analogs of the Mandelbrot set, now called ``tricorns'' and
``multicorns'' for the families $f_{c,d}(z) = \bar{z}^d+c$, see
\cite{croweStructureMandelbarSet1989,
  mukherjeeMulticornsUnicornsII2017,
  nakaneMulticornsUnicornsAntiholomorphic2003,
  nakaneConnectednessTricorn1993, hubbardMulticornsAreNot2014}.

\begin{definition}
  Let $f$ be an anti-analytic map, and let $z_0 \in \C$ be a fixed point
  of $f$ with \emph{multiplier} $\lambda = \bar{\partial} f(z_0)$ and
  \emph{real multiplier} $L = |\lambda| = |\bar{\partial}
  f(z_0)|$. Then $z_0$ is
  \begin{itemize}
  \item \emph{superattracting} (or \emph{critical}) if $L = 0$,
  \item \emph{attracting} if $0<L<1$,
  \item \emph{repelling} if $L>1$, and
  \item \emph{indifferent} if $L=1$.
  \end{itemize}
  If $f$ has a fixed point at $\infty$, we
  define its multiplier as the multiplier of the conjugate
  function $g(w)=1/f(1/w)$ at $w=0$. The (real) multiplier of a periodic point $z_0$ of period $m\geq 2$ is defined to be the (real) multiplier of the $m$-th iterate $f^m$ at $z_0$. Periodic points are classified analogously to fixed points in terms of their multipliers.
\end{definition}
\begin{remark} 
  Note that the multiplier of $z_0$ with respect to the second iterate $f^2=f\circ f$ is not
  $\lambda^2$, but $\lambda \bar{\lambda} = |\lambda|^2$. In
  particular, $f^2$ has a superattracting, attracting, or repelling
  fixed point at $z_0$ if $f$ does. If $z_0$ is an indifferent fixed point of $f$, then $f^2$
  has a parabolic fixed point with multiplier $1$ at $z_0$, i.e., $z_0$ is a fixed point of multiplicity $\ge 2$ for $f^2$.  Note also that the multipliers for
  anti-analytic maps are not invariant under conjugacy by analytic
  maps. For instance, if $f(z) = \lambda \bar{z} + O(z^2)$ and
  $\phi(z) = az+O(z^2)$ with $a \ne 0$, then
  $\phi \circ f \circ \phi^{-1}(z) = \lambda' \bar{z} + O(z^2)$ with
  $\lambda' = (a/\bar{a}) \lambda = \frac{a^2}{|a|^2} \lambda$.  However, it
  is easy to see that the real multiplier $L = |\lambda|$ is invariant
  under analytic conjugacy.
\end{remark}

\begin{definition}
  An \emph{anti-rational map} $f(z)$ on $\CC$ is a rational map of $\bar{z}$,
  or alternatively, it is a map $f:\CC \to \CC$ such that its complex conjugate $\bar{f}:\CC \to \CC$, 
  $z\mapsto \bar{f}(z) = \overline{f(z)}$, is a rational map on $\CC$. The \emph{degree} of an
  anti-rational map $f$ is the degree of the corresponding rational map
  $\bar{f}$. Similarly, the \emph{local degree} of $f$ at a
  point $z_0\in \CC$, denoted $\deg(f, z_0)$, is defined as the local degree of the rational map
  $\bar f$ at $z_0$.
\end{definition}
\begin{remark}
  If we were to keep track of the orientation of preimages, the degree of
  an orientation-reversing map should be negative. However, in our
  context not much is gained by this convention, so we define (local) degrees to be non-negative, both for anti-rational maps and later for
  anti-Thurston maps.
\end{remark}

 We will always assume that our functions are
  non-constant, and typically also that the degree is at least 2.
  Note also that the second iterate $f^2$ of an anti-rational
  map $f$ of degree $d$ is a rational map of degree $d^2$.

Let $f: \CC\to \CC$ be an anti-rational or rational map. Just as in the theory of complex dynamics of rational maps
\cite{beardonIterationRationalFunctions1991,
  carlesonComplexDynamics1993,
  steinmetzRationalIteration1993,milnorDynamicsOneComplex2006}, we
define the \emph{Fatou set} $F_f$ of $f$ as the set of points in $\CC$ where the
family of iterates $(f^n)$ is normal, the \emph{Julia set} $J_f$ as
its complement, the set of \emph{critical points} $C_f$ as the set of
points where the local degree of $f$ is larger than one, the set of
\emph{critical values} as $f(C_f)$, and the \emph{postcritical set}
$P_f := \bigcup\limits_{n=1}^\infty f^n(C_f)$ as the forward orbit of
the critical set. We say that $f$ is \emph{postcritically finite} if
$P_f$ is a finite set. 

It is easy to see that $J_{f^2} = J_f$,
$F_{f^2}= F_f$, $C_{f^2} = C_f \cup f^{-1}(C_f)$, and $P_{f^2} = P_f$,
so iteration of an anti-rational map $f$ is in some sense equivalent to iteration of the
rational map $f^2$. However, most rational maps are not second
iterates of anti-rational maps, so the latter have some special
properties.

Whereas a rational map of degree $d$ always has $d+1$ fixed points,
when counted with multiplicity, the situation is more complicated for
anti-rational maps. The following is well-known and has been proved
using the harmonic argument principle in
\cite{khavinsonNumberZerosCertain2006}
and using the Lefschetz fixed point theorem in
\cite{geyerSharpBoundsValence2008}.

\begin{theorem}
  \label{thm:counting-fixed-points}
  Let $f$ be an anti-rational map of degree $d \ge 2$ with a total of $N_{attr}$
  attracting and superattracting fixed points, with $N_{rep}$ repelling
  fixed points, and with no indifferent fixed points. Then $N_{rep}-N_{attr}
  = d-1$. In particular, the total number of fixed points of $f$ is $N =
  N_{rep}+N_{attr} = 2N_{attr}+d-1$.
\end{theorem}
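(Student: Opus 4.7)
The plan is to apply the Lefschetz fixed-point theorem to $f$, viewed as a continuous self-map of the sphere $S^2 = \CC$. Since $f$ is an orientation-reversing branched cover of degree $d$, its induced action on $H_0(S^2)$ is the identity and on $H_2(S^2)$ is multiplication by the topological degree $-d$, so the Lefschetz number is $L(f) = 1 + (-d) = 1-d$. If every fixed point is isolated and has local Lefschetz index $+1$ when attracting or superattracting and $-1$ when repelling, then equating the sum of local indices with $L(f)$ yields $N_{attr} - N_{rep} = 1-d$, i.e.\ $N_{rep} - N_{attr} = d-1$, from which $N = 2N_{attr} + d-1$ is immediate.

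For a non-critical fixed point $z_0$ I would expand $f(z) = z_0 + \lambda \overline{w} + O(w^2)$ with $w = z - z_0$ and $\lambda = a+ib$. The real-linear map $w \mapsto \lambda \bar w$ is represented by the matrix $\bigl(\begin{smallmatrix} a & b \\ b & -a \end{smallmatrix}\bigr)$, which has determinant $-L^2$, so a short computation gives
\[
\det\bigl(I - Df(z_0)\bigr) = 1 - a^2 - b^2 = 1 - L^2.
\]
The hypothesis that no fixed point is indifferent forces $L \ne 1$, so this determinant is nonzero: it is positive when $L < 1$ (attracting) and negative when $L > 1$ (repelling), giving the desired local Lefschetz indices $+1$ and $-1$.

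The main obstacle is the superattracting case, where the linearization vanishes and the sign formula above does not directly apply. Here I would argue topologically: at a critical fixed point of multiplicity $m$, one has $f(z) = z_0 + c\,\bar w^{m+1} + o(\bar w^{m+1})$ for some $c \ne 0$, hence
\[
f(z) - z = -w + c\,\bar w^{m+1} + o(\bar w^{m+1}),
\]
so on a sufficiently small circle around $z_0$ the linear term $-w$ dominates. The local index, defined as the winding number of $(f(z)-z)/|f(z)-z|$ around the origin, therefore equals the winding number of $-w/|w|$, which is $+1$; this simultaneously shows that $z_0$ is isolated. Summing local indices finishes the proof. An equivalent alternative, corresponding to the other citation, runs via the harmonic argument principle applied to the harmonic function $h(z) = z - \overline{R(z)}$ with $R = \overline{f}$, whose zeros are exactly the fixed points of $f$ and whose orientation-sensitive count of zeros reproduces the same identity.
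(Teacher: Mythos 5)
Your proposal is correct and is exactly the Lefschetz fixed-point argument that the paper itself invokes: it gives no proof of Theorem~\ref{thm:counting-fixed-points} but attributes it to a harmonic-argument-principle proof and to a Lefschetz-theorem proof in \cite{geyerSharpBoundsValence2008}, and your computation ($L(f)=1-d$, index $\operatorname{sign}(1-L^2)=\pm 1$ at non-critical fixed points, index $+1$ at superattracting ones via the dominating $-w$ term) reproduces the latter. The only cosmetic omission is a remark that a fixed point at $\infty$ is handled in the chart $w=1/z$, which does not affect the argument.
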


It should be noted that we will restrict ourselves to maps with only
superattracting and repelling periodic points, so that we can ultimately ignore
attracting periodic points. However, in the interest of giving a slightly more comprehensive background of results in
anti-holomorphic dynamics, we also include results on attracting fixed
points.

The following orientation-reversing versions of the local normal
forms as provided by Koenigs' and Boettcher's theorems in complex
dynamics are straightforward consequences of the corresponding
theorems for analytic maps.
\begin{theorem}\label{thm:koenigs}\cite{nakaneMulticornsUnicornsAntiholomorphic2003}
  Let $f$ be an anti-analytic map defined in a neighborhood of a fixed
  point $z_0$ with real multiplier $L \notin \{ 0,1\}$. Then there
  exists an analytic map $\phi$ defined in a neighborhood
  of $z_0$, with $\phi(z_0)=0$ and $|\phi'(z_0)|=1$, such that
  \[
    \phi(f(z)) = L \overline{\phi(z)}
  \]
  in a neighborhood of $z_0$.
\end{theorem}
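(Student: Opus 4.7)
The plan is to reduce to the classical analytic case by passing to $f^2$, which is analytic. Since $z_0$ is a fixed point of $f$ with multiplier $\lambda$, $\bar\partial f(z_0) = \lambda$, and $|\lambda| = L \notin \{0,1\}$, the remark preceding the theorem tells us that $f^2$ is analytic at $z_0$ with multiplier $|\lambda|^2 = L^2 \notin \{0,1\}$. Classical Koenigs linearization then gives an analytic $\psi$ in a neighborhood of $z_0$, with $\psi(z_0)=0$ and $\psi'(z_0) \ne 0$, satisfying
\[
\psi(f^2(z)) = L^2 \psi(z).
\]

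Next I would manufacture a second linearizer of $f^2$ from $\psi$ and $f$, and invoke Koenigs uniqueness. Define
\[
\tilde\phi(z) := \frac{1}{L}\,\overline{\psi(f(z))}.
\]
Since $\psi$ is analytic and $f$ is anti-analytic, $\psi\circ f$ is anti-analytic, so $\tilde\phi$ is analytic. Using $\psi\circ f^2 = L^2 \psi$ one checks directly that $\tilde\phi(f^2(z)) = L^2 \tilde\phi(z)$, and $\tilde\phi(z_0)=0$, $\tilde\phi'(z_0)\ne 0$. The uniqueness part of Koenigs' theorem (linearizers for an analytic germ with non-exceptional multiplier are unique up to a multiplicative constant) produces $c \in \C^*$ with $\tilde\phi = c\psi$, i.e.
\[
\psi(f(z)) = \bar c\, L\, \overline{\psi(z)}.
\]

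The key observation is that $|c|=1$. Writing $\psi(z) = a(z-z_0) + O((z-z_0)^2)$ and $f(z) = z_0 + \lambda\overline{(z-z_0)} + O((z-z_0)^2)$, the leading coefficient of $\tilde\phi$ at $z_0$ is $\bar a\bar\lambda/L$, while that of $c\psi$ is $ca$, so $c = \bar a\bar\lambda/(aL)$ and $|c|=|\lambda|/L=1$. Then I would look for $\phi$ in the form $\phi = \alpha\psi$. The desired functional equation $\phi\circ f = L\bar\phi$ rewrites (using the relation above) as $\alpha\bar c = \bar\alpha$, i.e.\ $\alpha/\bar\alpha = c$; since $|c|=1$, such an $\alpha$ exists (take $\alpha$ with argument equal to half the argument of $c$), and it is determined up to a positive real factor. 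I would use this remaining positive scalar to normalize $|\phi'(z_0)| = |\alpha||a| = 1$, which gives the stated form.

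This scheme covers both the attracting and repelling cases uniformly. The only substantive step is the identification $|c|=1$, which is forced by the local form of $f$ at $z_0$ together with the real multiplier $L$; after that, producing $\phi$ is a one-line algebraic adjustment of $\psi$. I do not anticipate any serious obstacle beyond verifying these computations carefully and keeping track of the conjugates.
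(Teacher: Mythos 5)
Your argument is correct. The paper does not prove this statement at all --- it is quoted from Nakane--Schleicher with only the remark that such local normal forms ``are straightforward consequences of the corresponding theorems for analytic maps'' --- so there is no proof in the paper to compare against; your write-up is a legitimate way of making that remark precise. The chain of reductions checks out: $f^2$ is analytic with multiplier $L^2\notin\{0,1\}$, so Koenigs applies; $\tilde\phi=\tfrac1L\,\overline{\psi\circ f}$ is analytic (conjugate of an anti-analytic germ) and linearizes $f^2$ with nonvanishing derivative, so Koenigs uniqueness gives $\tilde\phi=c\psi$; the leading-coefficient computation $c=\bar a\bar\lambda/(aL)$ forces $|c|=|\lambda|/L=1$ since $L=|\lambda|$; and the equation $\alpha/\bar\alpha=c$ is solvable precisely because $|c|=1$, with the leftover positive scalar used to arrange $|\phi'(z_0)|=1$. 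For comparison, the more common direct route is to first conjugate by a rotation $z\mapsto az$ so that the multiplier becomes the positive real number $L$ (possible because, as the paper's own remark notes, $\lambda$ transforms to $(a/\bar a)\lambda$), and then run the Koenigs iteration $\phi_n=f^{2n}/L^{2n}$ or $f^n$ divided by the appropriate real power of $L$ directly; that avoids the uniqueness-constant bookkeeping but is not otherwise shorter. Both the attracting and repelling cases are indeed covered uniformly, since Koenigs only requires $0<|\mu|\ne 1$.
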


\begin{theorem}\label{thm:boettcher}\cite{nakaneConnectednessTricorn1993}
  Let $f$ be an anti-analytic map defined in a neighborhood of a
  superattracting fixed point $z_0$ with local expansion $f(z) = z_0 +
  a_m \overline{(z-z_0)}^m + O((z-z_0)^{m+1})$, where $m \ge 2$ and
  $a_m \ne 0$. Then there exists an analytic map $\phi$ defined in a
  neighborhood of $z_0$, with $\phi(z_0)=0$ and $\phi'(z_0) \ne 0$,
  such that
  \[
    \phi(f(z)) = \overline{\phi(z)}^m
  \]
  in a neighborhood of $z_0$.
\end{theorem}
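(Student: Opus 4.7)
The plan is to reduce to the classical (analytic) Böttcher theorem applied to the analytic second iterate $f^2$, and then match leading coefficients to lift that conjugacy to an anti-holomorphic one for $f$ itself. Writing $w = z - z_0$ and substituting the expansion of $f$ into itself, one computes
\[
  f^2(z) - z_0 = |a_m|^2\,\bar a_m^{\,m-1}\, w^{m^2} + O(w^{m^2+1}),
\]
so $f^2$ is analytic at $z_0$ with a superattracting fixed point of local degree $m^2$. Classical Böttcher therefore provides an analytic map $\psi$ near $z_0$ with $\psi(z_0)=0$, $\psi'(z_0) = \alpha \neq 0$, and $\psi \circ f^2 = \psi^{m^2}$; here $\alpha$ is determined only up to an $(m^2-1)$-th root of unity and satisfies $\alpha^{m^2-1} = |a_m|^2\,\bar a_m^{\,m-1}$.

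Next, I would introduce the auxiliary analytic function $\theta(z) := \overline{\psi(f(z))}$, which vanishes to order $m$ at $z_0$ with leading coefficient $\bar\alpha\,\bar a_m$. The target identity $\psi \circ f = \overline{\psi}^{\,m}$ is equivalent to $\theta = \psi^m$, and both $\theta$ and $\psi^m$ satisfy the functional equation $u \circ f^2 = u^{m^2}$; for $\theta$ this uses $\psi(f^2(f(z))) = \psi(f(z))^{m^2}$, which is just the Böttcher relation for $f^2$ applied at $f(z)$. Hence it is enough to match their leading coefficients, i.e., to arrange $\alpha^m = \bar\alpha\,\bar a_m$. Taking moduli automatically yields $|\alpha|^{m-1} = |a_m|$, which is already a consequence of the Böttcher constraint; the remaining argument equation is $(m+1)\arg\alpha \equiv -\arg a_m \pmod{2\pi}$. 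Using $m^2 - 1 = (m-1)(m+1)$, one verifies that the map $\zeta \mapsto \zeta^{m+1}$ from $(m^2-1)$-th roots of unity to themselves surjects onto the $(m-1)$-th roots of unity, and a direct calculation shows the required correction factor lies in this image; so among the $m^2-1$ admissible Böttcher normalizations of $\psi$, exactly $m+1$ of them satisfy the refined equation, and I would fix one.

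With such $\psi$ in hand, the quotient $r := \theta/\psi^m$ is analytic near $z_0$ with $r(z_0) = 1$, and the common functional equation gives $r \circ f^2 = r^{m^2}$. Passing to the principal branch $g := \log r$ (well defined near $z_0$) yields $g \circ f^2 = m^2 g$, and iterating gives $g(z) = g(f^{2n}(z))/m^{2n} \to 0$ since $f^{2n}(z) \to z_0$. Hence $r \equiv 1$, so $\theta = \psi^m$, and $\phi := \psi$ is the required conjugacy. The main obstacle is the root-of-unity bookkeeping in the leading-coefficient step: it is essential that the $(m^2-1)$-fold ambiguity in Böttcher's theorem contains an $(m+1)$-fold family of normalizations compatible with the stronger anti-holomorphic functional equation, and this is precisely what the factorization $m^2 - 1 = (m-1)(m+1)$ delivers.
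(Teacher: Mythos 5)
Your argument is correct, and it is worth noting that the paper itself offers no proof of this statement: it cites Nakane--Schleicher and remarks only that the orientation-reversing local normal forms are ``straightforward consequences of the corresponding theorems for analytic maps.'' Your reduction to the analytic second iterate is one legitimate way to make that remark precise, and the details check out: the leading coefficient of $f^2$ is indeed $a_m\bar a_m^{\,m}=|a_m|^2\bar a_m^{\,m-1}$; the auxiliary function $\theta=\overline{\psi\circ f}$ is analytic, vanishes to order $m$ with leading coefficient $\bar\alpha\,\bar a_m$, and satisfies $\theta\circ f^2=\theta^{m^2}$ via $f\circ f^2=f^2\circ f$; and the quantity $q=\bar\alpha_0\bar a_m/\alpha_0^{m}$ measuring the coefficient mismatch satisfies $q^{m-1}=1$ by the Böttcher constraint, so it lies in the image of $\zeta\mapsto\zeta^{m+1}$ on $\mu_{m^2-1}$ and the normalization $\alpha^m=\bar\alpha\,\bar a_m$ can be achieved (in exactly $m+1$ ways). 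The concluding uniqueness step via $r=\theta/\psi^m$, $\log r\circ f^2=m^2\log r$, and $f^{2n}\to z_0$ is the standard rigidity argument and is sound. For comparison, the more common route in the literature is to first conjugate by a linear map $z\mapsto\lambda(z-z_0)$, using the solvability of $\lambda\bar\lambda^{-m}a_m=1$ (the same $(m+1)$-fold root extraction that appears in your coefficient matching), to normalize $a_m=1$, and then run the classical Böttcher iteration $\phi_n$ directly on $f$; that approach avoids the root-of-unity bookkeeping inside $\mu_{m^2-1}$ but requires redoing the convergence estimate, whereas yours quotes classical Böttcher as a black box. Either way the $(m+1)$-fold ambiguity you identify is the correct count of normalizations.
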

We will call the conjugating maps $\phi$ as above Koenigs' (in the attracting
and repelling case) and Boettcher's (in the superattracting case) maps,
respectively. 

Suppose now that $f$ is an anti-rational map. The \emph{basin of attraction} of an attracting or superattracting fixed point $z_0$ is the set of all points in $\CC$ that converge to $z_0$ under iteration of $f$. The \emph{immediate basin} $A_f(z_0)$ is defined to be the connected component of the basin of attraction that contains $z_0$.

One can use the functional
equations from Theorems \ref{thm:koenigs} and \ref{thm:boettcher} to try to extend the respective conjugating maps $\phi$ to the whole basin of
attraction of the fixed point. Similar to the rational case, one may then deduce the following two results. 


\begin{theorem}
  \label{thm:attracting-basin}
  Let $f$ be an anti-rational map of degree $d \ge 2$ with an
  attracting fixed point $z_0$. Then the immediate basin $A_f(z_0)$
  must contain a critical point of $f$. If it contains only one
  critical point of $f$, then it is simply connected.
\end{theorem}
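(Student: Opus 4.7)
My plan is to handle the two assertions separately: I would reduce the existence of a critical point in $A$ to the classical holomorphic statement via the second iterate $f^2$, and deduce simple connectivity from a direct topological Riemann--Hurwitz computation applied to $f|_A$.

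For the existence statement, I would first note that $z_0$ is also an attracting fixed point of the rational map $f^2$, with multiplier $\lambda\bar\lambda = L^2 \in (0,1)$. A short continuity argument shows that the basin of $z_0$ under $f$ coincides with the basin under $f^2$: if $f^{2n}(z) \to z_0$, then $f^{2n+1}(z) = f(f^{2n}(z)) \to f(z_0) = z_0$, so the two basins (as sets) agree, and hence so do their connected components at $z_0$. The classical theorem for rational maps then supplies a critical point $c^* \in A$ of $f^2$. Invoking the identity $C_{f^2} = C_f \cup f^{-1}(C_f)$ recalled earlier: either $c^* \in C_f$ and we are done, or $f(c^*) \in C_f$, in which case forward invariance yields $f(c^*) \in f(A) \subset A$, producing a critical point of $f$ in $A$.

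For simple connectivity under the uniqueness assumption, I would apply the topological Riemann--Hurwitz formula to $f|_A \colon A \to A$. This first requires $f(A) = A$, which is the standard fact that a proper open map preserving the Fatou/Julia partition carries Fatou components onto Fatou components; the usual proof goes through verbatim in the anti-holomorphic setting. Letting $k = \deg(f|_A) \ge n$, where $n \ge 2$ is the local degree of $f$ at the unique critical point $c \in A$, Riemann--Hurwitz (a purely topological statement, insensitive to orientation) gives
\[
  \chi(A)(k-1) = n - 1 \ge 1.
\]
Combined with the standard inequality $\chi(A) \le 1$ for any connected proper open subset of $\CC$ (arising from $\chi(A) = 2 - c$ where $c \ge 1$ is the number of complementary components), this forces $\chi(A) = 1$, i.e., $A$ is simply connected. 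The only mildly technical points are the verification that $f(A) = A$ and that Riemann--Hurwitz applies to anti-holomorphic branched covers; both are routine once one observes that orientation plays no role in the underlying topology.
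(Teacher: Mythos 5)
Your existence argument is correct but takes a genuinely different route from the paper. The paper obtains the critical point by attempting to extend the anti-holomorphic Koenigs linearization $\phi$ of $z_0$ along the functional equation $\phi(f(z))=L\overline{\phi(z)}$ and observing that the extension must be obstructed by a critical point; you instead reduce to the classical Fatou theorem for the rational map $f^2$ and transfer the critical point back to $f$ via the identity $C_{f^2}=C_f\cup f^{-1}(C_f)$ and forward invariance of the immediate basin. Your reduction is complete -- the identification of the immediate basins of $f$ and $f^2$ and the two-case analysis on $c^*$ are both handled -- and it has the advantage of not requiring any anti-holomorphic linearization machinery at all.

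The simple-connectivity argument, however, has a genuine gap. The identity $\chi(A)(k-1)=n-1$ obtained by applying Riemann--Hurwitz to the proper self-map $f|_A\colon A\to A$ is only meaningful when $\chi(A)$ is finite, i.e.\ when $A$ is finitely connected, and that is not known a priori. Invariant Fatou components of rational (and anti-rational) maps can be infinitely connected, and in fact a fixed Fatou component has connectivity $1$, $2$, or $\infty$; so the entire content of the statement is to rule out infinite connectivity, which is exactly the case in which your Riemann--Hurwitz relation degenerates to $-\infty=-\infty$ and yields no information. Your inequality $\chi(A)\le 1$ ``arising from $\chi(A)=2-c$'' silently assumes the number $c$ of complementary components is finite, which is the point at issue. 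The standard repair -- and what the paper's phrase ``in the same way as for rational functions'' is pointing to -- is an exhaustion argument: let $U_0$ be a simply connected linearizing neighborhood of $z_0$, let $U_{m+1}$ be the component of $f^{-1}(U_m)$ containing $z_0$, and show inductively that each $U_m$ is simply connected by applying Riemann--Hurwitz to the proper map $f\colon U_{m+1}\to U_m$ of some degree $k_m$; this application is legitimate because $U_m$ is simply connected, hence $U_{m+1}$ is finitely connected, and it gives $\chi(U_{m+1})=k_m-\delta_m\ge k_m-(n-1)\ge 1$, forcing $\chi(U_{m+1})=1$. Since $A=\bigcup_m U_m$ is an increasing union of simply connected domains, $A$ is simply connected. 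The remaining ingredients of your computation -- the verification that $f(A)=A$, the fact that Riemann--Hurwitz is purely topological and indifferent to orientation, and the bound $n\le k$ -- are all fine and carry over once the finiteness issue is addressed.
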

\begin{theorem}
  \label{thm:superattracting-basin}
  Let $f$ be an anti-rational map of degree $d \ge 2$ with a
  superattracting fixed point~$z_0$. If the immediate basin $A_f(z_0)$
  contains no other critical point besides $z_0$, then it is simply
  connected and the Boettcher map $\phi$ extends to a conformal map
  from $A_f(z_0)$ onto the unit disk $\D$.
\end{theorem}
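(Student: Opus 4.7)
My plan is to prove simple connectivity of $U := A_f(z_0)$ directly by exhausting it with simply connected sub-domains, and then to invoke the classical (holomorphic) Boettcher theorem for $f^2$ to produce the conformal isomorphism. Since $F_f = F_{f^2}$, $U$ is also the immediate basin $A_{f^2}(z_0)$, and $f^2$ has a superattracting fixed point at $z_0$ of local degree $m^2$. To invoke the classical theorem I need that $U$ contains no critical point of $f^2$ other than $z_0$, and using $C_{f^2} = C_f \cup f^{-1}(C_f)$, $C_f \cap U = \{z_0\}$, and $f(U) \subseteq U$, this reduces to showing $f^{-1}(z_0) \cap U = \{z_0\}$. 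Both this identity and the simple connectivity of $U$ will fall out of the exhaustion construction.

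Let $V_0 \subseteq U$ be a small Boettcher neighborhood of $z_0$ on which the local Boettcher map $\phi$ is a conformal isomorphism onto a disk, and define $V_n$ recursively as the connected component of $f^{-1}(V_{n-1}) \cap U$ containing $z_0$. Since $f$ preserves the Julia set, $f|_U \colon U \to U$ is proper (a boundary point of $U$ cannot have its $f$-image in the Fatou component $U$), and from this each restriction $f|_{V_n} \colon V_n \to V_{n-1}$ is also proper and hence surjective. I prove by induction that each $V_n$ is a topological disk: assuming $V_{n-1}$ is a disk, the hypothesis $C_f \cap U = \{z_0\}$ implies that $f|_{V_n}$ is branched only at $z_0$ with local degree $m$, so $f \colon V_n \setminus \{z_0\} \to V_{n-1} \setminus \{z_0\}$ is an unramified finite-degree cover of a punctured disk, classified by a finite-index subgroup of $\mathbb{Z} = \pi_1(V_{n-1} \setminus \{z_0\})$; this forces $V_n \setminus \{z_0\}$ to be a punctured disk, hence $V_n$ a disk, and the degree of $f|_{V_n}$ is exactly $m$. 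Any $z \in U$ lies in some $V_n$, because along a path $\gamma \subseteq U$ from $z$ to $z_0$ the iterates $f^n(\gamma)$ converge uniformly to $z_0$ and eventually lie in $V_0$, putting $\gamma$ inside the component of $f^{-n}(V_0) \cap U$ containing $z_0$. Thus $\bigcup V_n = U$ is a nested union of disks and so is itself simply connected. The same degree count gives $f^{-1}(z_0) \cap U = \{z_0\}$, which in turn gives $C_{f^2} \cap U = \{z_0\}$.

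With the hypothesis verified, the classical Boettcher theorem applied to $f^2$ produces a conformal isomorphism $\Phi \colon U \to \D$ with $\Phi(f^2(z)) = \Phi(z)^{m^2}$, and I normalize $\Phi$ so that it agrees on $V_0$ with the local anti-holomorphic Boettcher $\phi$ of $f$. Then $\Phi(f(z)) = \overline{\Phi(z)}^m$ holds on $V_0$; both sides are anti-holomorphic on the connected open set $U$, so by the identity theorem (applied to their complex conjugates) the equality extends to all of $U$, and $\Phi$ is the desired Boettcher extension. I expect the inductive covering-space argument showing each $V_n$ is a disk over which $f$ has degree exactly $m$ to be the most delicate step; everything else is book-keeping or an appeal to the classical holomorphic theory.
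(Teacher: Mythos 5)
Your argument is essentially correct, but it takes a genuinely different route from the one the paper intends. The paper treats this theorem as a direct corollary of the local Boettcher normal form: one extends $\phi$ (equivalently $\phi^{-1}$) step by step through the immediate basin using the functional equation $\phi(f(z))=\overline{\phi(z)}^m$, exactly as in the classical orientation-preserving argument, and the absence of further critical points is what allows the extension to exhaust the basin; simple connectivity then comes out as a byproduct of $\phi$ being a conformal bijection onto $\D$. You instead establish simple connectivity first, by exhausting $U=A_f(z_0)$ with the disks $V_n$, then outsource the global extension entirely to the classical holomorphic Boettcher theorem applied to $f^2$ (after verifying $C_{f^2}\cap U=\{z_0\}$ via $C_{f^2}=C_f\cup f^{-1}(C_f)$ and $f^{-1}(z_0)\cap U=\{z_0\}$), and finally transfer the anti-holomorphic functional equation from $V_0$ to all of $U$ by the identity theorem applied to the holomorphic functions $\overline{\Phi\circ f}$ and $\Phi^m$. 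This reduction to $f^2$ is clean and avoids redoing any extension argument in the anti-holomorphic category; its cost is the separate properness/exhaustion bookkeeping, which you handle correctly. The normalization step is legitimate: $\phi$ is itself a local Boettcher coordinate for $f^2$, so $\Phi$ and $\phi$ differ near $z_0$ by an $(m^2-1)$-th root of unity, which can be absorbed.

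The one step that needs repair is the inductive covering-space argument. As written, you pass to the restriction $f\colon V_n\setminus\{z_0\}\to V_{n-1}\setminus\{z_0\}$ and call it an unramified cover of the punctured disk, but this presupposes $f^{-1}(z_0)\cap V_n=\{z_0\}$, which is precisely one of the conclusions you later extract from the degree count; if $V_n$ contained another (necessarily non-critical) preimage $w$ of $z_0$, the displayed restriction would not map into $V_{n-1}\setminus\{z_0\}$. The circularity is easily removed: since $f\colon V_n\to V_{n-1}$ is a proper branched cover of some degree $k\ge m$ whose only possible branch point is $z_0$ (of local degree $m$), the Riemann--Hurwitz formula gives $\chi(V_n)=k\,\chi(V_{n-1})-(m-1)=k-m+1$, and since $V_n$ is a proper connected open subset of the sphere we have $\chi(V_n)\le 1$; hence $k=m$ and $\chi(V_n)=1$ simultaneously, yielding in one stroke that $V_n$ is a disk, that the degree is exactly $m$, and that $z_0$ is the only preimage of $z_0$ in $V_n$. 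With that substitution the proof goes through.
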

The following weak version of a theorem by Fatou is an immediate
corollary.
\begin{corollary}
  An anti-rational map $f$ of degree $d \ge 2$ can have at most $2d-2$
  attracting or superattracting fixed points.
\end{corollary}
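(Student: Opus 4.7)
The plan is to injectively assign to each attracting or superattracting fixed point a distinct critical point of $f$, and then bound the size of the critical set via Riemann--Hurwitz applied to the rational map $R(z) = \overline{f(z)}$.

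First, I would set up the injection. If $z_0$ is a superattracting fixed point, then by definition $z_0 \in C_f$, and of course $z_0 \in A_f(z_0)$. If $z_0$ is merely attracting, then Theorem~\ref{thm:attracting-basin} guarantees that its immediate basin $A_f(z_0)$ contains at least one critical point of $f$. Since distinct attracting/superattracting fixed points lie in distinct connected components of the Fatou set, their immediate basins are pairwise disjoint; consequently, the critical points selected in each basin are pairwise distinct. This yields an injective map from the set of attracting and superattracting fixed points into $C_f$.

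Next, I would count $C_f$. Because $f$ and $R(z) = \overline{f(z)}$ share the same critical points with the same local degrees, $C_f = C_R$, and $R$ is an ordinary rational map of degree $d$. Riemann--Hurwitz gives $\sum_{c \in C_R}(m_c - 1) = 2d - 2$, and since every critical point satisfies $m_c \ge 2$, we get $|C_f| \le 2d-2$. Composing with the injection above produces the desired inequality $N_{\text{attr}} \le 2d-2$.

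There is no real obstacle in this argument; the corollary is genuinely immediate given Theorem~\ref{thm:attracting-basin}, Theorem~\ref{thm:superattracting-basin}, and Riemann--Hurwitz. The only minor point to verify is that distinct attracting/superattracting fixed points really do give disjoint immediate basins, which is standard: each immediate basin is a Fatou component, and a Fatou component can contain at most one attracting fixed point of $f$.
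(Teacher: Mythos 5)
Your proof is correct and follows exactly the route the paper intends: the paper states this as an ``immediate corollary'' of Theorems~\ref{thm:attracting-basin} and \ref{thm:superattracting-basin}, with the implicit argument being precisely your injection from attracting/superattracting fixed points into the critical set (via disjoint immediate basins) combined with the Riemann--Hurwitz bound $|C_f|\le 2d-2$. Nothing is missing.
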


In this paper, we are mainly interested in \emph{critically fixed}
anti-rational maps, i.e., those anti-rational maps $f$ for which all
points in $C_f$ are fixed. It is easy to show that the second
iterate $f^2$ of such a map $f$ is never critically fixed,
unless $f$ is conjugate to a map of the form
$z \mapsto \bar{z}^{d}$. The next result summarizes some basic facts about the dynamics of critically fixed
anti-rational maps.

\begin{theorem}
  \label{thm:crit-fixed-basic}
  Let $f$ be a critically fixed anti-rational map of degree $d \ge
  2$. Then all periodic points of $f$ are either superattracting or
  repelling, every orbit in the Fatou set
 converges to a superattracting fixed point, all Fatou components of $f$ are simply connected, and the
  Julia set $J_f$ is connected.
\end{theorem}
\begin{proof}
  Since $f$ is critically fixed, the rational map $g=f^2$ has the same critical values as $f$, and so
  every critical value of $g$ is a superattracting fixed point of $g$. By standard results of complex dynamics \cite{milnorDynamicsOneComplex2006}, this shows that $g$
  can not have any attracting or indifferent periodic points (because such periodic points require an infinite critical orbit), which
  implies that $f$ can not have any attracting or indifferent periodic
  points either. Similarly, since Sullivan's classification of Fatou components applied to $g$ implies that
  every orbit in the Fatou set of $g$ must converge to a superattracting fixed point of $g$, we have that the same is true for $f$, because the Fatou sets (and superattracting fixed points) of $f$ and $g=f^2$ coincide.

By Theorem~\ref{thm:superattracting-basin}, the immediate basin of every superattracting fixed point of $f$ is simply connected. For any other Fatou component $U$, there exists a minimal $n \ge 1$ such that
$V = f^n(U)$ is the immediate basin of a superattracting fixed point of $f$. Under our assumptions, $f^n: U\to V$ is a branched covering with no
critical points in $U$, so $f^n$ is a
homeomorphism from $U$ onto the simply connected domain $V$, and thus $U$ is simply connected as well.
Finally, since all Fatou components of $f$ are simply connected, the Julia set
$J_f$ must be connected.
\end{proof}


\section{Thurston theory for orientation-reversing maps}
\label{sec:thurst-theor-orient}

\subsection{Background and setup}
\label{sec:background-setup}

\emph{Thurston maps} are topological versions of postcritically
finite rational maps, and since their introduction by William Thurston, they
have been studied extensively in complex dynamics; see, e.g.,
\cite{douadyProofThurstonTopological1993,
  pilgrimCombiningRationalMaps1998,
  pilgrimCanonicalThurstonObstructions2001,
  haissinskyCoarseExpandingConformal2009,
  bonnotThurstonEquivalenceRational2012,
  selingerThurstonPullbackMap2012,
  selingerTopologicalCharacterizationCanonical2013,
  buffTeichmullerSpacesHolomorphic2014,
  hubbardTeichmullerTheoryApplications2016,
  thurstonRubberBandsRational2016, bonkExpandingThurstonMaps2017,
  thurstonElasticGraphs2019}. 
  In this section, we define
orientation-reversing versions of Thurston maps and of their combinatorial
equivalence, and show that the analog of Thurston's characterization
of rational maps still holds in the orientation-reversing setting. We
should remark that \cite{lodgeDynamicalGasketsGenerated2019}
implicitly contains the same result, even though the explicitly stated
theorem in that paper seems a little weaker. 
We also introduce the notions of canonical obstruction and decomposition and summarize their properties.


Let $f:S^2 \to S^2$ be a branched covering.  The \emph{critical set}
$C_f$ of $f$ is the set of its branch points, i.e., the set of points
where $f$ is not a local homeomorphism.  It is well-known that there
is an integer $d \ge 1$ such that every $w \notin f(C_f)$ has $d$
preimages under $f$, and we say that $d$ is the \emph{degree} of $f$. We denote by $\deg(f,q)$ the \emph{local degree} of $f$ at a point $q\in S^2$. Then $d=\sum_{q\in f^{-1}(\{p\})} \deg(f,q)$ for all $p\in S^2$. Just as in the case of rational maps, we define the
\emph{postcritical set} $P_f := \bigcup\limits_{n=1}^\infty f^n(C_f)$
as the forward orbit of the critical set, and we say that $f$ is
\emph{postcritically finite} if $P_f$ is a finite set. 

\begin{definition}
  A \emph{marked Thurston map} $(f,Q)$ is an orientation-preserving
  branched covering $f:S^2 \to S^2$ of degree $d \ge 2$ together with
  a finite forward-invariant set $Q \subseteq S^2$ containing the
  postcritical set $P_f$. A \emph{marked anti-Thurston map} $(f,Q)$
  is an orientation-reversing branched covering $f:S^2 \to S^2$ with
  the same properties.  An (\emph{anti-})\emph{Thurston map} is a marked
  \antiThurston map with $Q = P_f$; we will also call such \antiThurston maps \emph{unmarked}. 
\end{definition}

Marked Thurston maps were introduced in
\cite{buffTeichmullerSpacesHolomorphic2014}, where the
generalization of Thurston's characterization to these maps was
proved. Marked anti-Thurston maps are just the orientation-reversing
variants of these maps. It follows directly from the definition that
these maps are postcritically finite.

\begin{definition}\label{def:Thurston-equiv}
  Two marked \antiThurston maps $(f,Q_f)$ and $(g,Q_g)$ are
  \emph{(combinatorially) equivalent} if there exist
  orientation-preserving homeomorphisms $\phi,\psi:S^2 \to S^2$
  such that $\phi(Q_f) = \psi(Q_f) = Q_g$ and
  $\phi \circ f = g \circ \psi$, and such that $\phi$ and $\psi$ are
  isotopic relative to $Q_f$. We say that a marked \antiThurston map
  is \emph{realizable} if it is equivalent to a marked (anti-)rational
  map.
\end{definition}

\begin{remark}
  It is an exercise in homotopy lifting techniques to show that
  equivalence between $f$ and $g$ implies equivalence between the
  iterates $f^n$ and $g^n$ for all $n\geq 1$, see
  \cite[Cor.~11.6]{bonkExpandingThurstonMaps2017}.
\end{remark}

\begin{definition}
  Two marked \antiThurston maps $(f,Q_f)$ and $(g,Q_g)$ are called
  \emph{isotopic} if $Q_f = Q_g$ and there exist
  $\phi,\psi\in \Homeo_0^+(S^2,Q_f)$ such that
  $\phi \circ f = g \circ \psi$.
\end{definition}

\begin{remark}
  By homotopy lifting, two marked \antiThurston maps $(f,Q_f)$ and
  $(g,Q_g)$ are isotopic if and only if $Q_f = Q_g$ and
  $f = g \circ \psi$ for some $\psi\in \Homeo_0^+(S^2,Q_f)$.
\end{remark}

Let $(f,Q)$ be a marked Thurston or anti-Thurston map with
postcritical set $P_f \subseteq Q$. The \emph{orbifold $\cO_f$
  associated with $f$} is the topological orbifold with underlying
space $S^2$ and cone points at every $z \in P_f$ of order $\nu(z)$,
where $\nu(z)$ is the least common multiple of the local degrees of
$f^n$ at all points $w \in f^{-n}(\{z\})$, over all $n \ge 1$.  Note
that $2\leq \nu(z)\leq \infty$ for all $z\in P_f$. Furthermore,
$ \nu(z)= \infty$ if and only if $z$ is in a periodic cycle containing
a critical point, and we use the common convention that a cone point
of order $\infty$ is viewed as a puncture in $\cO_f$. The
\emph{signature} of the orbifold $\cO_f$ is the tuple
$(\nu(z_1), \nu(z_2), \ldots, \nu(z_m))$, $m=\# P_f$, of the orders of
all its cone points, where we adopt the convention to list them in
increasing order, i.e.,
$\nu(z_1) \le \nu(z_2) \le \ldots \le \nu(z_m)$.

The \emph{Euler characteristic} of $\cO_f$ is 
\[
  \chi(\cO_f) := 2-\sum\limits_{z \in P_f}
  \left(1-\frac{1}{\nu(z)}\right).
\] 
In our context we always have $\chi(\cO_f)\le 0$, and in the case
$\chi(\cO_f) = 0$ we have that $f$ is an orbifold covering map
\cite[Prop.~9.1]{douadyProofThurstonTopological1993}. (The proof given
in the reference does not use the fact that $f$ is
orientation-preserving and works both for Thurston and anti-Thurston
maps.)  We say that $\cO_f$ is \emph{hyperbolic} if $\chi(\cO_f)<0$,
and \emph{parabolic} otherwise.  There are exactly 6 possible
parabolic signatures: $(\infty, \infty)$, $(2,2,\infty)$, $(2,2,4)$,
$(2,3,6)$, $(3,3,3)$, and $(2,2,2,2)$. All of them except the last one
induce a unique complex structure on $S^2$, and thus the corresponding unmarked 
\antiThurston map is always
equivalent to an (anti-)rational map. In fact, one can list all
possible equivalence classes of such anti-Thurston maps similar to the
orientation-preserving case, as in
\cite[Prop.~9.2]{douadyProofThurstonTopological1993}.  In the
following, a marked \antiThurston map whose
associated orbifold has signature $(2,2,2,2)$ will be called a
\emph{$(2,2,2,2)$-map}.  Note that these are the only maps with
parabolic orbifold and more than 3 postcritical points.

In the particular case where all critical points of $f$ are periodic,
the orbifold $\cO_f$ is just the topological surface
$S^2 \setminus P_f$, and it is hyperbolic if and only if $P_f$
contains at least 3 points.  Most \antiThurston maps have hyperbolic
orbifolds, and in our applications we will not consider 
anti-rational maps with
non-hyperbolic orbifolds, except for the simple examples
$f(z)=\bar{z}^d$ 
($d
\geq 2$), with the parabolic orbifold
$\C^* := \C \setminus \{0\}$ of signature $(\infty,\infty)$.

(Marked) Thurston maps are a very flexible tool for creating
postcritically finite branched coverings of $S^2$ with prescribed dynamical behavior, and it
is of great interest to know whether a given Thurston map is
equivalent to a rational map.  
In order to state Thurston's characterization from
\cite{douadyProofThurstonTopological1993} and
\cite{buffTeichmullerSpacesHolomorphic2014}, as well as its orientation-reversing version (see \Cref{thm:anti-thurston}), we first have to introduce
the notion of multicurve obstructions. 
We should note that our
definition differs slightly from the original one in that we do not
require our obstructions to be \emph{stable}, i.e., backward
invariant. However, we will show below in
Lemma~\ref{lem:obstruction-types} that a marked \antiThurston map has a
stable obstruction if and only if it has an obstruction, so this
distinction is immaterial.

In the following, let $(f,Q)$ be a marked Thurston or anti-Thurston
map.  In such context, ``curves'' will always mean simple closed
(unoriented) curves in $S^2 \setminus Q$, and we will adopt the
convention that \emph{homotopy} will always be understood as free
homotopy in $S^2 \setminus Q$.

A simple closed curve $\gamma \subset S^2 \setminus Q$ is \emph{essential}
if each component of $S^2 \setminus \gamma$ contains at least two
points from $Q$, is \emph{peripheral} if one component of $S^2
\setminus \gamma$ contains exactly one point from $Q$, and is
\emph{null-homotopic} otherwise. A \emph{multicurve} in $S^2 \setminus
Q$ is a (possibly empty) tuple $\Gamma=(\gamma_1, \ldots, \gamma_m)$ of pairwise
disjoint and pairwise non-homotopic essential simple closed curves in
$S^2 \setminus Q$.  

We say that a multicurve $\Gamma$ is \emph{stable} if for all
$\gamma \in \Gamma$, every essential component of $f^{-1}(\gamma)$ is
homotopic to an element of $\Gamma$. The multicurve $\Gamma$ is called
\emph{completely invariant} if it is stable and if, in addition, for
every $\gamma' \in \Gamma$ there exists $\gamma \in \Gamma$ such that
$\gamma'$ is homotopic to a component of $f^{-1}(\gamma)$.

\begin{remark}
  Our notion of ``completely invariant'' multicurves follows
  \cite{selingerTopologicalCharacterizationCanonical2013}. Originally,
  this property was introduced in
  \cite{pilgrimCombinationsComplexDynamical2003}, where it was simply
  called ``invariant''. Some authors use ``invariant'' for the
  property that we call ``stable'', so our choice hopefully
  minimizes possible confusion between these two notions.
\end{remark}

Associated with any (non-empty) multicurve $\Gamma=(\gamma_1, \ldots, \gamma_m)$ is the
\emph{Thurston linear map} $L = L_{f,Q,\Gamma}: \R^\Gamma \to
\R^\Gamma$ with the \emph{Thurston matrix} $A =A_{f,Q,\Gamma}=
(a_{jk})_{j,k=1}^m$ defined as follows. Let $(\gamma_{j,k,\alpha})_\alpha$ be
the components of $f^{-1}(\gamma_k)$ homotopic to $\gamma_j$, and let
$d_{j,k,\alpha}=\deg(f: \gamma_{j,k,\alpha}\to \gamma_k)$ be the
degree of $f$ on each $\gamma_{j,k,\alpha}$. Then
\begin{equation*}
  L(\gamma_k) = \sum_{j=1}^m a_{jk} \gamma_j  \qquad \text{with}
  \qquad a_{jk} =  \sum_{\alpha} \frac{1}{d_{j,k,\alpha}} \ge 0.
\end{equation*}
In the definition of $L$ and $A$, we ignore any components of
$f^{-1}(\gamma_k)$ that are not essential or not homotopic to some
curve in $\Gamma$. Permuting the elements in $\Gamma$ obviously leads to
matrices that are conjugate by permutation matrices. Since we will only be interested in the equivalence class of $A$ under this conjugacy relation, with a slight abuse of notation, we may sometimes treat $\Gamma$ as a set of curves instead of an ordered tuple.
We also note that the linear map $L$ and the matrix $A$ depend only on
the homotopy classes of the curves in $\Gamma$.

By the Perron-Frobenius theorem, $A$ has a real non-negative leading 
eigenvalue $\lambda(f, Q, \Gamma)$, so that all other
eigenvalues $\lambda'$ satisfy $|\lambda'| \le \lambda(f, Q, \Gamma)$.

\begin{definition}
  Let $(f,Q)$ be a marked \antiThurston map.  A \emph{(multicurve)
    obstruction} for $(f,Q)$ is a multicurve $\Gamma$ in $S^2
  \setminus Q$ with $\lambda(f,Q,\Gamma) \ge 1$. It is
  \emph{irreducible} if the associated matrix $A = A_{f,Q,\Gamma}$ is
  irreducible, i.e., if no permutation of the curves in
  $\Gamma$ puts the matrix $A$ in the block lower-triangular form $
  \begin{bmatrix}
    A_{11} & 0 \\
    A_{21} & A_{22} \\
  \end{bmatrix}$. It is \emph{simple} if no permutation of the curves in
  $\Gamma$ puts the matrix $A$ in the block form $
  \begin{bmatrix}
    A_{11} & 0 \\
    A_{21} & A_{22} \\
  \end{bmatrix}
  $ with $A_{11}$ having leading eigenvalue strictly less than 1. 
  We say that $(f,Q)$ is \emph{obstructed} if there is a multicurve obstruction for $(f,Q)$; otherwise, we say that $(f,Q)$ is \emph{unobstructed}.
\end{definition}

Simple obstructions were introduced in
\cite{pilgrimCanonicalThurstonObstructions2001}, see also
\cite{selingerTopologicalCharacterizationCanonical2013}. From the
definition it is immediate that any union of disjoint irreducible
obstructions is simple.
In \cite{douadyProofThurstonTopological1993}, only stable
obstructions are considered. However, the following shows that this
distinction is immaterial.

\begin{lemma}
  \label{lem:obstruction-types}
  Let $(f,Q)$ be a marked \antiThurston map. Then every obstruction
  for $(f,Q)$ contains an irreducible obstruction, and every
  irreducible obstruction for $(f,Q)$ is contained in a completely
  invariant simple obstruction. In particular, if $(f,Q)$ has an
  obstruction, then it has both a completely invariant (and thus also
  stable) obstruction and an irreducible (and thus also simple)
  obstruction.
\end{lemma}
\begin{proof}
  The Perron-Frobenius theory (see, e.g., \cite[Vol.~2,
  Chap.~XIII]{gantmacherTheoryMatricesVols1959}) implies that any
  obstruction $\Gamma$ for $(f,Q)$ contains an irreducible
  submulticurve $\Gamma'$ with $\lambda(f,Q,
  \Gamma')=\lambda(f,Q,\Gamma)\geq 1$. To prove the second part,
  suppose that $\Gamma_0$ is an irreducible obstruction for $(f,Q)$.
  Let $\Gamma_n$, $n\geq 0$, be a multicurve in $S^2\setminus Q$
  obtained from $f^{-n}(\Gamma_0)$ by removing the non-essential
  curves and identifying the homotopic ones. Note that for every
  $\gamma \in \Gamma_0$ there is a component of $f^{-1}(\Gamma_0)$
  that is homotopic to $\gamma$; for otherwise, the $\gamma$-row of
  the matrix $A_{f,Q,\Gamma}$ consists of zeros, and $A_{f,Q,\Gamma}$
  is not irreducible. It follows that we may assume that
  $\Gamma_0\subseteq \Gamma_1$, and, by using induction, that
  $\Gamma_n\subseteq \Gamma_{n+1}$ for all $n\geq 0$. Since the curves
  in every $\Gamma_n$ are pairwise non-homotopic and $Q$ is finite,
  the multicurves $\Gamma_n$ must stabilize, that is, we must have
  $\Gamma_N=\Gamma_{N+1}=\Gamma_{N+2}=\dots$ for some sufficiently
  large $N$. It is immediate that the multicurve $\Gamma_N$ is
  completely invariant. To show that $\Gamma_N$ is simple, assume that
  some permutation of the curves in $\Gamma_N$ puts the associated
  matrix $A_{f,Q,\Gamma_N}$ in the block lower-triangular form $
  \begin{bmatrix}
    A_{11} & 0 \\
    A_{21} & A_{22} \\
  \end{bmatrix}$.
  Let $\Gamma' \subset \Gamma_N$ be the non-empty submulticurve that
  corresponds to the block $A_{11}$. Suppose that $\Gamma'$ contains a
  curve $\gamma_k\in \Gamma_k$ with $k\geq 1$. By construction, there
  is a curve $\gamma_{k-1} \in \Gamma_{k-1}$ such that one of the
  components of $f^{-1}(\gamma_{k-1})$ is homotopic to
  $\gamma_k$. Thus $\gamma_{k-1}$ must be in $\Gamma'$, and by reverse
  induction it follows that $\Gamma'$ must contain a curve from
  $\Gamma_0$. Since $\Gamma_0$ is irreducible, we have
  $\Gamma_0\subset \Gamma'$, and therefore the leading eigenvalue of
  $A_{11}$ is at least $\lambda(f,Q,\Gamma_0)\geq 1$. This finishes
  the proof of the lemma.
\end{proof}

One particularly important type of obstruction is a \emph{Levy cycle},
defined as follows.

\begin{definition}
  Let $(f,Q)$ be a marked \antiThurston map.  A multicurve $\Gamma =
  (\gamma_1, \ldots, \gamma_m )$ in $S^2 \setminus Q$ is called a
  \emph{Levy cycle} for $f$ if, for all $k=1,\ldots,m$, there exists a
  component $\gamma_k'$ of $f^{-1}(\gamma_k)$ that is homotopic to
  $\gamma_{k-1}$ (with the convention $\gamma_0 = \gamma_m$) and such
  that $\deg(f\colon \gamma'_k \to \gamma_k)=1$. If a Levy cycle
  $\Gamma$ consists of a single curve $\gamma$, we call $\gamma$ a
  \emph{fixed Levy curve}.
\end{definition}
It is easy to see that Levy cycles are irreducible obstructions. They
were first introduced and studied in
\cite{levyCriticallyFiniteRational1985}, where it was shown that these
are the only possible obstructions for topological polynomials and
for Thurston maps of degree 2. However, this is not true anymore for
general Thurston maps of degree $\ge 3$.

We close this subsection by finally stating Thurston's topological
characterization of marked rational maps
\cite{buffTeichmullerSpacesHolomorphic2014}; see
\cite{douadyProofThurstonTopological1993} for the unmarked case.
\begin{theorem}\label{thm:Thurston_theorem}
  Let $(f,Q_f)$ be a marked Thurston map that is not a
  $(2,2,2,2)$-map. Then $(f,Q_f)$ is equivalent to a marked rational
  map $(g,Q_g)$ if and only if $f$ does not have a stable
  obstruction. Moreover, if $(g',Q_{g'})$ is another marked rational
  map equivalent to $(f,Q_f)$ then there is a Möbius
  transformation $\theta $ satisfying $\theta(Q_g)=Q_{g'}$ and conjugating
  $g$ and $g'$, i.e., such that $\theta\circ g \circ \theta^{-1} =
  g'$.
\end{theorem}

\subsection{Induced map on Teichmüller space}
\label{sec:thurstons-theorem}

The proof of Thurston's theorem employs the iteration of the induced
\emph{Thurston pullback map} $\sigma_f:\cT_f \to \cT_f$ on a
Teichmüller space $\cT_f$ associated to a Thurston map $f$.  In
order to prove its orientation-reversing version, we first have to
define this induced map $\sigma_f$ on the respective Teichmüller
space in the orientation-reversing setting. We will assume that the
reader is familiar with quasiconformal mappings and the Teichmüller
theory of Riemann surfaces, as well as their applications to Thurston
maps as used in \cite{douadyProofThurstonTopological1993}. Good
expositions of the material can be found in
\cite{buffTeichmullerSpacesHolomorphic2014} and
\cite{hubbardTeichmullerTheoryApplications2016}.

We call a map $f:\CC \to \CC$ \emph{quasiregular} if it is a
non-constant map of the form $f=g \circ \phi$ with $g: \CC\to \CC$
rational and $\phi: \CC \to \CC$ quasiconformal.  We say that $f:\CC
\to \CC$ is \emph{anti-quasiregular} if its complex conjugate
$\bar{f}$ is quasiregular, or equivalently if the map
  $z\mapsto f(\bar z)$ is quasiregular.  It is well-known and not too
hard to show that every Thurston map is equivalent to a quasiregular
map, and every anti-Thurston map is equivalent to an anti-quasiregular
map. For an idea of the proof, see
\cite[Exercise~10.6.7]{hubbardTeichmullerTheoryApplications2016}.

We say that a quasiconformal map $\phi: \CC \to \CC$ is
\emph{normalized} if it fixes $0$, $1$, and $\infty$, and we denote
the space of all such normalized quasiconformal maps by $\QC$.  For a
quasiconformal or quasiregular map $f$, we denote its \emph{complex
  dilatation} by $\mu_f = \bar{\partial} f / \partial f$, and
equalities involving complex dilatations are always understood to hold
almost everywhere. By the measurable Riemann mapping theorem, for
every Lebesgue measurable function $\mu\colon \CC\to \CC$ with $\| \mu
\|_\infty < 1$ there exists a unique $\phi \in \QC$ with $\mu_\phi =
\mu$.

Suppose $(f,Q_f)$ is a marked (anti-)quasiregular \antiThurston map
with $Q_f\supseteq \{0,1,\infty\}$. We call a combinatorial
equivalence $\phi \circ f = g \circ \psi$ between $(f,Q_f)$ and a
marked (anti)-Thurston $(g,Q_g)$ \emph{normalized} if the
homeomorphisms $\phi$ and $\psi$ are normalized quasiconformal maps,
that is, if $\phi, \psi \in \QC$.  Note that the existence of such a
normalized equivalence implies $\{0, 1, \infty\} \subseteq
Q_g$. Consideration of only normalized equivalences is not too
restrictive: If $(f,Q_f)$ is equivalent to $(g,Q_g)$ via a not
necessarily normalized equivalence $\phi \circ f = g \circ \psi$, then
there is a unique Möbius transformation $\theta$ such that $\theta
\circ \phi$ fixes 0, 1, and $\infty$. If we now define $g_0 = \theta
\circ g \circ \theta^{-1}$, $\phi_0 = \theta \circ \phi$, and $\psi_0
= \theta \circ \psi$, then $g_0$ is Möbius conjugate to $g$, and
$(f,Q_f)$ is equivalent to $(g_0,\theta(Q_g))$ via the normalized
equivalence $\phi_0 \circ f = g_0 \circ \psi_0$.

A \emph{marked Riemann sphere} is a pair $(\CC, Q)$, where $\CC$ is
the standard Riemann sphere, equipped with the standard complex
structure, and $Q$ is a finite subset of $\CC$. We will always assume
that $Q$ contains at least three points, and for convenience we will
assume that $\{0,1,\infty\} \subseteq Q$. We denote the complex
conjugation map by $\iota(z) = \bar{z}$, and write $Q^* = \iota(Q)$
for the complex conjugates of the points in $Q$. The marked Riemann
surface $(\CC, Q^*)$ is the \emph{mirror image} of $(\CC, Q)$, and
$\iota$ is obviously a canonical (anti-conformal) isomorphism between
$(\CC, Q)$ and $(\CC, Q^*)$.

For a marked Riemann sphere $(\CC, Q)$, the \emph{Teichmüller
  equivalence relation} $\overset{Q}{\sim}$ on the space of
quasiconformal maps on $\CC$ is defined by declaring $\phi
\overset{Q}{\sim} \psi$ if there is a Möbius transformation $\theta$
such that $\theta \circ \phi$ and $\psi$ are isotopic relative to $Q$;
it is easy to see that the isotopy can be assumed to be through
quasiconformal maps. The associated \emph{Teichmüller space} $\cT_Q$
is the space of Teichmüller equivalence classes $[\phi]$ of
quasiconformal maps $\phi: \CC\to\CC$, or equivalently the space of
all normalized quasiconformal maps $\phi\in \QC$ modulo isotopy
relative to $Q$. This space has a natural complex analytic structure
\cite[Thm.~6.5.1]{hubbardTeichmullerTheoryApplications2006} and a
natural metric, the \emph{Teichmüller metric} $d$, which makes
$\cT_Q$ a complete metric space \cite[Prop.~and
Def.~6.4.4]{hubbardTeichmullerTheoryApplications2006}.

The standard (orientation-preserving) setup for proving Thurston's
theorem is as follows. Given a marked quasiregular Thurston map
$(f,Q)$ with $Q\supseteq \{0,1,\infty\}$ and $\phi \in \QC$, there is
a unique $\psi \in \QC$ that makes the map $g = \phi \circ f \circ
\psi^{-1}$ analytic (this is guaranteed by the uniformization
theorem). One useful way to think about the induced map $\phi \mapsto
\psi$ on $\QC$ is as a pullback of conformal structures on $\CC$,
where $\phi$ and $\psi$ provide the global charts. This pullback
respects the Teichmüller equivalence relation $\sim_Q$ on $\QC$, so
it induces a map $\sigma = \sigma_{f,Q}: \cT_Q \to \cT_Q$ on the
corresponding Teichmüller space, called \emph{Thurston's pullback
  map}. The map $\sigma$ is analytic (with respect to the natural
complex analytic structure on $\cT_Q$) and does not increase the
Teichmüller distance~$d$, i.e., $d(\sigma(\tau), \sigma(\tau')) \le
d(\tau,\tau')$ for all $\tau,\tau' \in \cT_Q$. Furthermore, if $(f,Q)$
is not a $(2, 2, 2, 2)$-map, then some iterate $\sigma^k$ is a strict
(but not necessarily uniform) contraction, i.e., $d(\sigma^k(\tau),
\sigma^k(\tau')) < d(\tau,\tau')$ for all distinct $\tau,\tau' \in
\cT_Q$; see \cite[Lem.~2.9]{buffTeichmullerSpacesHolomorphic2014}.

\begin{figure}[t]
  \centering
  \begin{tikzcd}
    (\CC, Q^*) \arrow[r, "\iota"] \arrow[d, "\widetilde{\psi}"] \arrow[rr,
    bend left, "\widetilde{f}"] & (\CC, Q)
    \arrow[r, "f"] \arrow[d, "\psi"] & (\CC, Q) \arrow[d, "\phi"] \\ 
    (\CC, \widetilde{\psi}(Q)) \arrow[r, "\iota"] \arrow[rr, bend right,
    "\widetilde{g}"] & (\CC, \psi(Q)) \arrow[r, "g"] & (\CC, \phi(Q))
  \end{tikzcd}
  \caption{Orientation-reversing setup. The map $f$ is
    anti-quasiregular, $\widetilde{f}$ is quasiregular, $g$ is
    anti-rational, and $\widetilde{g}$ is rational. The maps $\phi$,
    $\psi$, and $\widetilde{\psi}$ are normalized quasiconformal
    mappings.}
  \label{fig:anti-teich}
\end{figure}

The following lemma will enable us to give a unified definition of
Thurston's pullback map in the orientation-preserving and orientation-reversing
cases. 

\begin{lemma}
  \label{lem:anti-sigma}
  Let $(f,Q)$ be a marked (anti-)quasiregular \antiThurston map of
  degree $d$ with $\{ 0,1,\infty \} \subseteq Q$.  Then for every
  $\phi\in \QC$ there exists a unique $\psi \in \QC$ such that $g
  =\phi \circ f \circ \psi^{-1}$ is an (anti-)rational map of degree
  $d$.  Furthermore, if we have another map $\phi' \in \QC$ with
  $\phi' \overset{Q}\sim \phi$ and a map $\psi' \in \QC$ such that
  $g'= \phi' \circ f \circ (\psi')^{-1}$ is (anti-)rational, then
  $\psi'\overset{Q}\sim \psi$ and $g' = g$.
\end{lemma}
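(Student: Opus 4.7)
The plan is to first establish the orientation-preserving (quasiregular) case via the standard Douady--Hubbard pullback construction, and then reduce the anti-quasiregular case to it using complex conjugation $\iota$, as encoded in Figure~\ref{fig:anti-teich}.

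For the quasiregular case, given $\phi \in \QC$, I would set $\mu := f^{*}\mu_{\phi}$, which has $\|\mu\|_{\infty} < 1$ because $f$ is quasiregular. By the Measurable Riemann Mapping Theorem there is a unique $\psi \in \QC$ with $\mu_{\psi} = \mu$, and then $g := \phi \circ f \circ \psi^{-1}$ has vanishing Beltrami coefficient, hence is holomorphic. Being a degree-$d$ branched covering of $\CC$, it is rational of degree $d$. Uniqueness of $\psi$ is immediate from MRMT, since any competitor $\psi''$ producing a rational $g''$ must satisfy $\mu_{\psi''} = f^{*}\mu_{\phi}$ by the chain rule. The homotopy part in this case is the standard well-definedness of the Thurston pullback on $\cT_{P}$, for which I would cite \cite{douadyProofThurstonTopological1993, buffTeichmullerSpacesHolomorphic2014}.

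For the anti-quasiregular case, set $\tilde{f} := f \circ \iota$, which is quasiregular as the composition of two orientation-reversing maps. Apply the quasiregular case to $\tilde{f}$ with input $\phi$ to obtain $\tilde{\psi} \in \QC$ and rational $\tilde{g} = \phi \circ \tilde{f} \circ \tilde{\psi}^{-1}$. Define $\psi := \iota \circ \tilde{\psi} \circ \iota$ and $g := \tilde{g} \circ \iota$. Then $\psi \in \QC$, since $\iota$ is an anti-conformal involution fixing $\{0,1,\infty\}$, so conjugation by $\iota$ preserves $\QC$. Direct substitution yields $g = \phi \circ f \circ \psi^{-1}$, and since $\tilde{g}$ is rational and $\iota$ is anti-holomorphic, $g$ is anti-rational of degree $d$. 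Uniqueness of $\psi$ reduces to uniqueness of $\tilde{\psi}$ via the same $\iota$-conjugation trick: any competing $\psi''$ yields $\tilde{\psi}'' := \iota \circ \psi'' \circ \iota$ with $\phi \circ \tilde{f} \circ (\tilde{\psi}'')^{-1} = g'' \circ \iota$ rational, forcing $\tilde{\psi}'' = \tilde{\psi}$ and hence $\psi'' = \psi$.

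For the homotopy statement in the anti-quasiregular case, one must show that if $\phi' \overset{P}{\sim} \phi$, then $\psi' \overset{P}{\sim} \psi$ and $g' = g$. The cleanest route is to repeat the Douady--Hubbard Teichm\"uller argument directly, observing that its two essential ingredients, the forward invariance $f(P) \subseteq P$ (which holds for $P = P_{f}$ irrespective of orientation) and the fact that $f : S^{2} \setminus f^{-1}(P) \to S^{2} \setminus P$ is an unramified covering, do not rely on orientation. The step I expect to be the main obstacle is precisely that one cannot cite the orientation-preserving homotopy statement as a black box for $\tilde{f}$ relative to $P = P_{f}$, since the post-critical set $P_{\tilde{f}}$ of the auxiliary map need not equal $P_{f}$. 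I would circumvent this either by enlarging $P$ to a common forward-invariant finite set for $f$ and $\tilde{f}$, or, preferably, by verifying directly that the MRMT-with-parameters argument passes to the anti-quasiregular setting, yielding a continuous family $\psi_{t}$ with constant rational $\tilde{g}_{t}$ and hence constant anti-rational $g_{t}$.
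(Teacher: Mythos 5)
Your existence and uniqueness argument is essentially identical to the paper's: reduce to the quasiregular case via $\tilde f = f\circ\iota$, solve the Beltrami equation by the Measurable Riemann Mapping Theorem, and transport back by $\psi=\iota\circ\tilde\psi\circ\iota$ and $g=\tilde g\circ\iota$; your verification that $\phi\circ f\circ\psi^{-1}=\tilde g\circ\iota$ and your $\iota$-conjugation trick for uniqueness are exactly what the paper does. The only divergence is the homotopy statement, and here the paper's route is cleaner than either of your two alternatives: it never passes the isotopy through $\tilde f$ (so the worry about $P_{\tilde f}\ne P_f$ simply does not arise), but instead lifts the isotopy $\phi_t$ rel $P$ directly through the already-constructed anti-rational branched cover $g$, producing $\psi_t$ with $\phi_t\circ f=g\circ\psi_t$. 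This lifting uses only that the critical values of $g$ lie in $\phi(P)$, which is fixed pointwise by the isotopy, and that $g$ is an unramified covering off that set --- orientation is irrelevant. One then reads off $\psi_1\overset{P}\sim\psi$ and that $g=\phi'\circ f\circ\psi_1^{-1}$ is anti-rational, so the uniqueness part already proved forces $\psi_1=\psi'$ and $g'=g$. Your first suggested route (rerun the Douady--Hubbard argument, noting its ingredients are orientation-blind) amounts to the same thing. Your second route, however, has a genuine gap as stated: the Measurable Riemann Mapping Theorem with parameters yields continuity of $t\mapsto\psi_t$ and hence of $t\mapsto g_t$, but continuity alone does not make $g_t$ constant, since rational maps of fixed degree form a continuum; the constancy must come from the covering-space lifting identity $g_0\circ\psi_t=\phi_t\circ f$, not from MRMT. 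Since you offer the correct alternative as well, this is a flaw in one branch of your plan rather than in the proof as a whole.
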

\begin{proof}
  The orientation-preserving case of these (or equivalent) statements
  is covered in \cite{douadyProofThurstonTopological1993} and
  \cite{buffTeichmullerSpacesHolomorphic2014}, so we will only provide
  the argument in the orientation-reversing case. For the following
  discussion, the commutative diagram in Figure~\ref{fig:anti-teich}
  should be helpful.

  Let $(f,Q)$ be a marked anti-quasiregular anti-Thurston map with
  $\{ 0,1,\infty \} \subseteq Q$. Then $\widetilde{f}(z) = f(\bar{z}) = (f
  \circ \iota)(z)$ is quasiregular, so there exists a unique
  $\widetilde{\psi} \in \QC$ with complex dilatation $\mu_{\widetilde{\psi}} =
  \mu_{\widetilde{f}}$, or equivalently, such that $\widetilde{g} = \phi \circ
  \widetilde{f} \circ \widetilde{\psi}^{-1}$ is rational. Then $\psi = \iota
  \circ \widetilde{\psi} \circ \iota \in \QC$ is again a normalized
  quasiconformal map, and $g = \phi \circ f \circ \psi^{-1} =
  \widetilde{g} \circ \iota$ is anti-rational. From the uniqueness of
  $\widetilde{\psi}$, it immediately follows that $\psi \in \QC$ is the
  unique map in $\QC$ with this property.

  Now assume we have $\phi' \in \QC$ with $\psi' \overset{Q}\sim
  \phi$. Then the isotopy $\phi_t$ from $\phi=\phi_0$ to $\phi' =
  \phi_1$ relative to $Q$ lifts to an isotopy $\psi_t$ with $\psi_0
  = \psi$, and $\phi_t \circ f = g \circ \psi_t$. This shows in
  particular that $\psi_1 \in \QC$ with $\psi_1 \overset{Q}\sim
  \psi$, and that $g = \phi_1 \circ f \circ \psi_1^{-1} = \phi' \circ
  f \circ \psi_1^{-1}$ is anti-rational. By the uniqueness argument
  above, this shows that $\psi_1 = \psi'$ and $g_1 = g$.
\end{proof}
For the following corollary, let $\Rat_d$ and $\ARat_d$ denote the
space of rational and anti-rational maps of degree $d$, respectively.
\begin{corollary}
  \label{cor:sigma-def}
  With the same assumptions and notations as in
  Lemma~\ref{lem:anti-sigma}, the maps $S_f(\phi) = \psi$ and
  $R_f(\phi) = g$ on $\QC$ induce maps $\sigma = \sigma_{f,Q} :\cT_Q
  \to \cT_Q$ and $\rho = \rho_{f,Q}:\cT_P \to \Rat_d \cup \ARat_d$
  defined by $\sigma([\phi]) = [\psi]$ and $\rho([\phi]) = g$.
\end{corollary}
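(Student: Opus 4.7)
The content of this corollary is that the assignments $\phi \mapsto \psi$ and $\phi \mapsto g$ constructed in Lemma~\ref{lem:anti-sigma} descend to well-defined maps on the Teichm\"uller space $\cT_P$. My plan is to verify that (i) $S_f$ and $R_f$ are honestly single-valued maps on $\QC$, and (ii) they respect the Teichm\"uller equivalence relation $\overset{P}\sim$; both of these will be essentially immediate from Lemma~\ref{lem:anti-sigma}.

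First, I would invoke the existence-and-uniqueness half of Lemma~\ref{lem:anti-sigma}: for each $\phi \in \QC$ there is a unique $\psi \in \QC$ making $g = \phi \circ f \circ \psi^{-1}$ rational (or anti-rational, according to whether $f$ is quasiregular or anti-quasiregular), and this $g$ is also uniquely determined. Consequently $S_f(\phi) := \psi$ and $R_f(\phi) := g$ are well-defined on all of $\QC$, with values in $\QC$ and in $\Rat_d \cup \ARat_d$ respectively.

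Next, to push these down to $\cT_P$, I must check that if $\phi \overset{P}\sim \phi'$, then $S_f(\phi) \overset{P}\sim S_f(\phi')$ and $R_f(\phi) = R_f(\phi')$. This is exactly the isotopy-lifting half of Lemma~\ref{lem:anti-sigma}: an isotopy from $\phi$ to $\phi'$ relative to $P$ lifts to an isotopy from $\psi$ to $\psi'$ relative to $P$, and the associated (anti-)rational map is constant along the lift. Hence the formulas $\sigma_f([\phi]) := [S_f(\phi)]$ and $\rho_f([\phi]) := R_f(\phi)$ do not depend on the representative $\phi$, giving well-defined maps $\sigma_f: \cT_P \to \cT_P$ and $\rho_f: \cT_P \to \Rat_d \cup \ARat_d$.

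There is no genuine obstacle here --- the corollary is bookkeeping on top of Lemma~\ref{lem:anti-sigma}. The only minor observation worth making is that the image of $\rho_f$ lies in a single stratum: $\rho_f(\cT_P) \subseteq \Rat_d$ when $f$ is quasiregular and $\rho_f(\cT_P) \subseteq \ARat_d$ when $f$ is anti-quasiregular, since the orientation character of $g = \phi \circ f \circ \psi^{-1}$ is determined by that of $f$ alone.
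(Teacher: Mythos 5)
Your proposal is correct and follows exactly the same route as the paper: the paper's proof is a one-line appeal to Lemma~\ref{lem:anti-sigma}, noting that $S_f$ respects the Teichm\"uller equivalence relation and $R_f$ is constant on equivalence classes, which is precisely the bookkeeping you spell out. The extra remark about the image of $\rho_f$ lying in a single stratum is a harmless and accurate observation.
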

\begin{proof}
  This follows immediately from Lemma~\ref{lem:anti-sigma}. The map
  $S_f$ respects the Teichmüller equivalence relation, and the map
  $R_f$ is constant on Teichmüller equivalence classes.
\end{proof}

\begin{proposition}
  \label{prop:anti-sigma}
  Let $(f,Q)$ be a quasiregular or anti-quasiregular marked Thurston
  or anti-Thurston map with $\{ 0,1,\infty \} \subseteq Q$. Then the
  following statements hold true for the induced maps $\sigma =
  \sigma_{f,Q}$ and $\rho = \rho_{f,Q}$.
  \begin{enumerate}
  \item $\sigma$ does not increase Teichmüller distances, i.e.,
    $d(\sigma(\tau), \sigma(\tau')) \le d(\tau, \tau')$ for all
    $\tau, \tau' \in \cT_Q$.
  \item The map $f \mapsto \sigma_{f,Q}$ commutes with iteration,
    i.e., $\sigma_{f^n,Q} = (\sigma_{f,Q})^n$.
  \item \label{it:3} If $\tau = [\phi]$ is a fixed point of
    $\sigma$, then $g=\rho(\tau)$ is a rational or anti-rational
    map, and $\phi \circ f = g\circ \psi$ with $\psi = S_f(\phi)$ is a
    normalized combinatorial equivalence between $f$ and $g$.
  \item \label{it:4} If $\phi \circ f = g \circ \psi$ is a normalized
    combinatorial equivalence to a rational or anti-rational map $g$, then
    $\tau = [\phi]=[\psi]$ is a fixed point of $\sigma$, and
    $g = \rho(\tau)$.
  \end{enumerate}
\end{proposition}
\begin{remark}
  Together, \ref{it:3} and \ref{it:4} show that there is a
  one-to-one correspondence between fixed point of $\sigma_f$ and
  conjugacy classes of anti-rational maps equivalent to $f$. However,
  this slightly stronger ``normalized version'' will be used to show
  that $g$ inherits all the symmetries of $f$.
\end{remark}

\begin{proof}
  The reason for including the orientation-preserving case is that
  even iterates of orientation-reversing maps are
  orientation-preserving. However, the orientation-preserving part of
  this theorem is already proved in
  \cite{douadyProofThurstonTopological1993} and
  \cite{buffTeichmullerSpacesHolomorphic2014}, so we will assume that
  $f$ is anti-quasiregular.

  With the same notation as in the proof of Lemma~\ref{lem:anti-sigma}
  and Figure~\ref{fig:anti-teich}, the map $S_{\widetilde{f}}(\phi) =
  \widetilde{\psi}$ is the ordinary pullback of a complex structure by a
  quasiregular map, and it induces an analytic map
  $\sigma_{\widetilde{f}}: \cT_Q \to \cT_{Q^*}$ between the Teichmüller
  spaces of the marked sphere $(\CC, Q)$ and its mirror image $(\CC,
  Q^*)$. This implies that $\sigma_{\widetilde{f}}$ does not increase
  Teichmüller distance. (The arguments in
  \cite{douadyProofThurstonTopological1993,
    buffTeichmullerSpacesHolomorphic2014} which show that the induced
  map $\sigma_{f,Q}$ does not increase Teichmüller distance hold
  under quite general assumptions for branched covers between marked
  Riemann surfaces, it is not essential that $f$ is a self-map of
  $(\CC, Q)$.)  With slight abuse of notation, the map $\iota:
  \cT_{Q^*} \to \cT_Q$, given by conjugating by complex conjugation as
  $\iota([\widetilde{\psi}]) = [\iota \circ \widetilde{\psi} \circ \iota] =
  [\psi]$, is an (anti-analytic) isometric isomorphism between $\cT_Q$
  and $\cT_{Q^*}$ with respect to the Teichmüller metric. Since
  $\sigma_{f,Q} = \iota \circ \sigma_{\widetilde{f}, Q^*}$, this shows
  that $d(\sigma_{f,Q}(\tau), \sigma_{f,Q}(\tau')) \le d(\tau, \tau')$
  for all $\tau, \tau' \in \cT_Q$.

  The argument that $f \mapsto \sigma_{f,Q}$ commutes with iteration
  is nearly the same as in the orientation-preserving case, except
  that one has to take into account that the iterates are alternating
  between orientation-preserving and orientation-reversing. The basic
  idea is to use the characterization of $\sigma_{f,Q}$ given by
  Lemma~\ref{lem:anti-sigma}, combined with the fact that the
  composition of $n$ anti-rational maps is rational or anti-rational,
  depending on whether $n$ is even or odd. For an illustration of the
  argument, see Figure~\ref{fig:sigma-iterate}.

  The last two claims about the one-to-one correspondence between
  fixed points of $\sigma_{f,Q}$ and conjugacy classes of anti-rational
  maps equivalent to $f$ are direct consequences of our definitions
  and results above.
\end{proof}

\begin{figure}[t]
  \centering
  \begin{tikzcd}
    (\CC, Q) \arrow[r, "f"] \arrow[d, "\phi_n"]
    \arrow[rrrr, bend left, "f^n"]
    & \cdots \arrow
    [r,"f"] & (\CC, Q) \arrow[r, "f"] \arrow[d, "\phi_2"]
    & (\CC, Q) \arrow[r, "f"]
    \arrow[d, "\phi_1"] & (\CC, Q) \arrow[d, "\phi_0"] \\
    (\CC, \phi_n(Q)) \arrow[r, "g_n"]
    \arrow[rrrr, bend right, "G_n"]
    &
    \cdots \arrow [r,"g_3"] &
    (\CC, \phi_2 (Q)) \arrow[r, "g_2"]
    & (\CC, \phi_1(Q)) \arrow[r, "g_1"] & (\CC, \phi_0(Q))
  \end{tikzcd}
  \caption{Illustration of the fact that $\sigma_{f^n,Q} =
    \sigma_{f,Q}^n$. Given a point $\tau_0 \in \cT_Q$ represented by a
    normalized quasiconformal map
    $\phi_0$, its iterates $\tau_n = \sigma_{f,Q}^n(\tau_0)$ are
    represented by the unique normalized quasiconformal maps
    $\phi_n$ for which $g_n = \phi_{n-1} \circ f \circ \phi_n^{-1}$
    is anti-rational. This implies that $\phi_n$ is the unique
    normalized quasiconformal map which makes $G_n = g_1 \circ g_2
    \circ \ldots \circ g_n = \phi_0 \circ f^n \circ \phi_n^{-1}$
    rational or anti-rational, depending on whether $n$ is even or
    odd, and thus $\phi_n = \sigma_{f^n,Q}(\phi_0)$.}
  \label{fig:sigma-iterate}
\end{figure}

\subsection{Canonical obstructions}
\label{subsec:canon-obstr}

The notion of a \emph{canonical obstruction} was introduced by Pilgrim
in \cite{pilgrimCanonicalThurstonObstructions2001}, and was later
refined by Selinger in \cite{selingerThurstonPullbackMap2012,
  selingerTopologicalCharacterizationCanonical2013}. It consists of
all homotopy classes of essential curves whose hyperbolic length under
iteration of $\sigma_f$ tends to zero. If the canonical obstruction is
non-empty, it is indeed a (simple and completely invariant) multicurve
obstruction. If it is empty and $f$ has a hyperbolic orbifold, then
$f$ is always combinatorially equivalent to a rational map.  In
\cite{pilgrimCombinationsComplexDynamical2003}, canonical
obstructions were used to obtain a canonical decomposition of
obstructed Thurston maps.

In the orientation-reversing case, all of this still works, and we
summarize the relevant definitions and theorems in the following. As
above, we will always assume that the set of marked points $Q$ is
finite and contains at least three points.
\begin{definition}
  Let $(S^2, Q)$ be a marked sphere, $\tau \in \cT_Q$ a point in
  the corresponding Teichmüller space, and $\gamma$ be an essential
  closed curve in $S^2 \setminus Q$. Then we denote by
  $l_\tau(\gamma)$ the hyperbolic length of the unique geodesic on the
  surface $S^2 \setminus Q$ with the hyperbolic metric induced by
  $\tau$.
\end{definition}
\begin{remark}
  From the definition it is immediate that $l_\tau(\gamma)$ only
  depends on the homotopy class of $\gamma$.
\end{remark}
\begin{definition}
  \label{def:canonical-obstruction}
  Let $(f,Q)$ be a marked Thurston or anti-Thurston map. Let $\tau_0$
  be a base point in Teichmüller space, and let
  $\tau_n = \sigma_{f,Q}^n(\tau_0)$ be its iterates under the induced
  map on Teichmüller space. The \emph{canonical (multicurve) obstruction}
  $\Gamma_{f,Q}$ for $(f,Q)$ is the set of all homotopy classes of
  essential curves $\gamma$ in $S^2\setminus Q$ for which
  $l_{\tau_n}(\gamma) \to 0$ as $n \to \infty$.
\end{definition}
\begin{remark}
  It is easy to see that one could replace $\tau_0$ by any point in
  the Teichmüller space, so that this definition is independent of
  the choice of base point.
\end{remark}

The following are orientation-reversing versions of results from
\cite{pilgrimCanonicalThurstonObstructions2001}, see also
\cite{selingerThurstonPullbackMap2012,
  selingerTopologicalCharacterizationCanonical2013}.
\begin{lemma}
  \label{lem:canonical-obstruction-iterates}
  Let $(f,Q)$ be a marked Thurston or an anti-Thurston map. Then the
  canonical obstructions for $(f,Q)$ and its iterates $(f^k,Q)$ coincide.
\end{lemma}
\begin{proof}
  Let $k \ge 1$ be an integer. We claim that the canonical
  obstructions of $(f,Q)$ and $(f^k,Q)$ are identical.  Following the
  notation of \cite{pilgrimCanonicalThurstonObstructions2001}, we
  write $w(\gamma, \tau) = -\log l_\tau(\gamma)$ for an essential
  closed curve $\gamma$ in $S^2 \setminus Q$ and a point
  $\tau \in \cT_Q$. With the notation of
  Definition~\ref{def:canonical-obstruction}, we let
  $w_n(\gamma) = -\log l_{\tau_n}(\gamma)$. By
  \cite[Prop.~7.2]{douadyProofThurstonTopological1993}, we have
  $|w(\gamma, \tau) - w(\gamma, \tau')| \le 2 d(\tau, \tau')$ for any
  $\tau, \tau' \in \cT_Q$, so that
  $|w_{n+1}(\gamma)-w_n(\gamma)| \le 2 d(\tau_{n+1}, \tau_n) \le 2
  d(\tau_1, \tau_0)$.  In particular, this implies that
  $\lim_{n\to\infty} w_{kn}(\gamma) = \infty$ if and only if
  $\lim_{n\to\infty} w_n(\gamma) \to \infty$. Since the canonical
  obstruction for $f$ consists of the curves for which
  $\lim_{n\to\infty} w_n(\gamma) = \infty$, and the one for $f^k$
  consists of the curves for which
  $\lim_{n\to\infty} w_{kn}(\gamma) = \infty$, this shows that the
  canonical obstructions for $f$ and $f^k$ coincide.
\end{proof}

\begin{theorem}
  \label{thm:canonical-obstruction-is-empty}
  Let $(f,Q)$ be a marked anti-Thurston map which is not a $(2,2,2,2)$-map,
  and let $\Gamma_{f,Q}$ be its canonical obstruction. If
  $\Gamma_{f,Q} = \emptyset$, then $(f,Q)$ is unobstructed. Otherwise,
  $\Gamma_{f,Q}$ is a simple and completely invariant obstruction.
\end{theorem}
\begin{proof}
  If $(f,Q)$ is obstructed, then so is $(f^2,Q)$, which implies that
  $\Gamma_{f,Q} = \Gamma_{f^2,Q} \ne \emptyset$. If
  $\Gamma_{f,Q} = \Gamma_{f^2,Q} \ne \emptyset$, then $(f,Q)$ and
  $(f^2,Q)$ are both obstructed, This show that $(f,Q)$ is obstructed
  if and only if $(f^2,Q)$ is obstructed, if and only if the common
  canonical obstruction is non-empty. By \cite[Thm.~1.2, Remark
  (1)]{pilgrimCanonicalThurstonObstructions2001}, $\Gamma_{f^2,Q}$ is
  a simple obstruction for $(f^2,Q)$. This immediately implies that
  $\Gamma_{f,Q}$ is a simple obstruction for $(f,Q)$.
  
  In order to see that the canonical obstruction $\Gamma_{f,Q}$ is
  completely invariant, we first show that it is stable for $f$.
  Observe that for any curve $\gamma$ and any $\tau \in \cT_Q$ we have
  that $f$ is an antiholomorphic map of degree $d$ from $\CC$ with the
  structure given by $\sigma_f(\tau)$ to $\CC$ with the structure
  given by $\tau$, so that for any component $\gamma'$ of
  $f^{-1}(\gamma)$ we have that
  $l_{\sigma_f(\tau)}(\gamma') \le d l_\tau(\gamma)$. This implies
  that for any such $\gamma \in \Gamma_{f,Q}$ we also have
  $\gamma' \in \Gamma_{f,Q}$, which shows that $\Gamma_{f,Q}$ is a
  stable obstruction. It remains to show that for every
  $\gamma' \in \Gamma_{f,Q}$ there exists $\gamma \in \Gamma_{f,Q}$
  such that $\gamma'$ is homotopic to a component of $f^{-1}(\gamma)$
  in $S^2 \setminus Q$. This is equivalent to the assertion that the
  associated Thurston matrix $A$ does not have any zero rows. However,
  from
  \cite[Prop.~3.2]{selingerTopologicalCharacterizationCanonical2013},
  we know that $\Gamma_{f,Q}$ is a completely invariant obstruction
  for $f^2$, so that its associated Thurston matrix $A^2$ does not
  have any zero rows. This implies that $A$ does not have any zero
  rows either.
\end{proof}

Let $(f,Q)$ be a marked $(2,2,2,2)$-map of degree $d \ge 2$.  The
$(2,2,2,2)$-orbifold $\cO_f$ is the quotient of a torus $\T$ by an
involution $\iota\colon \T \to \T$. Denote the canonical projection
from $\T$ to $\cO_f = \T/\sim$ by $p$. Then $p$ is a 2-to-1 covering,
ramified over the points in $P_f$, which are the images of the fixed
points of $\iota$. By
\cite[Lem.~9.4]{douadyProofThurstonTopological1993} (which also holds
in the orientation-reversing case, with the same proof), the map $f$
lifts to a covering map $\widetilde{f}\colon \T \to \T$. By
\cite[Prop.~4.2]{selingerTopologicalCharacterizationCanonical2013} and
the discussion following it (see also
\cite[Prop.~2.6]{farbPrimerMappingClass2012}), every homotopy class of
simple closed curves in $S^2 \setminus P_f$ defines (by lifting) up to
sign a unique element $\langle \gamma \rangle$ of $H_1(\T, \Z) \cong
\Z^2$, and for every $h \in H_1(\T, \Z)$ there exists a simple closed
curve $\gamma$ in $S^2 \setminus P_f$ and an integer $n \in \Z$ such
that $h = n \langle \gamma \rangle$. The induced map on homology
$\widetilde{f}_*\colon H_1(\T, \Z) \to H_1(\T, \Z)$ is linear with
determinant $d$ in the orientation-preserving case, $-d$ in the
orientation-reversing case.

\begin{theorem}\label{thm:can-obstruction-is-empty-parabolic}
  The canonical obstruction of a marked $(2,2,2,2)$-map $(f,Q)$ is empty if
  and only if every curve of every simple obstruction for
  $(f,Q)$ has two postcritical points of $f$ in each complementary
  component and the two eigenvalues of $\widetilde{f}_*$ are
  non-integer or have the same absolute value.
\end{theorem}
\begin{proof}
  For Thurston maps this is
  \cite[Cor.~5.4]{selingerTopologicalCharacterizationCanonical2013}.
  If $f$ is an anti-Thurston map, then $f^2$ is a Thurston map. Note
  that a multicurve $\Gamma$ is a simple obstruction for $(f,Q)$ if
  and only if it is a simple obstruction for $(f^2,Q)$ (the necessity
  follows from
  \cite[Prop.~2.2]{selingerTopologicalCharacterizationCanonical2013}).
  At the same time, the two eigenvalues of $\widetilde{f}_*$ are of
  opposite signs or non-integer if and only if the two eigenvalues of
  $\widetilde{f^2}_*$ are equal or non-integer. The statement now
  follows from the orientation-preserving case and Lemma
  \ref{lem:canonical-obstruction-iterates}.
\end{proof}

\subsection{Orientation-reversing Thurston theorem}
The following theorem is the orientation-reversing version of the
generalized Thurston theorem from
\cite{buffTeichmullerSpacesHolomorphic2014}, allowing for additional
marked points besides the postcritical set, and including some maps
with parabolic orbifolds.

\begin{theorem}
  \label{thm:anti-thurston}
  Let $(f,Q_f)$ be a marked anti-Thurston map which is not a
  $(2,2,2,2)$-map. Then the following are equivalent:
  \begin{itemize}
  \item $(f,Q_f)$ is equivalent to an anti-rational map.
  \item $(f,Q_f)$ does not have an obstruction.
  \item The canonical obstruction $\Gamma_{f,Q}$ is empty.
  \end{itemize}
  Furthermore, if this is the case, the equivalent anti-rational map
  is unique up to Möbius conjugacy. More precisely, if
  $\{0,1,\infty\} \subseteq Q_f$, then there is exactly one marked
  anti-rational map $(g,Q_g)$ such that there exists a normalized
  combinatorial equivalence between $(f,Q_f)$ and $(g,Q_g)$.
\end{theorem}
\begin{proof}
  From
  Theorem~\ref{thm:canonical-obstruction-is-empty} and
  Lemma~\ref{lem:canonical-obstruction-iterates} we know that the canonical
  obstructions for $(f,Q)$ and $(f^2,Q)$ are identical, and either map
  is unobstructed iff its canonical obstruction is empty. If either
  map is obstructed, then both are, and neither can be equivalent to
  an anti-rational or rational map.

  It remains to show that $(f,Q)$ is equivalent to an anti-rational
  map if it is unobstructed.  For this direction we use the properties
  of $\sigma = \sigma_{f,Q_f}$ given by \Cref{prop:anti-sigma}.
  Since $(f^2, Q_f)$ is an unobstructed orientation-preserving marked
  Thurston map, by
  \cite[Thm.~2.2]{buffTeichmullerSpacesHolomorphic2014} the even
  iterates $\sigma^{2n}(\tau)$ of any $\tau \in \cT_{Q_f}$ converge to
  a unique fixed point $\tau^*$ of $\sigma^2$. Since this is true for
  both the even iterates $\sigma^{2n}(\tau)$ of $\tau$ and the even
  iterates $\sigma^{2n}(\sigma(\tau)) = \sigma^{2n+1}(\tau)$ of
  $\sigma(\tau)$, we conclude that $\sigma^{n}(\tau) \to \tau^*$,
  which by continuity of $\sigma$ implies that
  $\tau^* = \sigma(\tau^*)$ is the unique fixed point of $\sigma$, and
  thus that $(f,Q_f)$ is equivalent to a marked anti-rational
  anti-Thurston map $(g, Q_g)$, unique up to Möbius conjugacy. The
  precise normalized uniqueness statement follows directly from
  \ref{it:3} in \Cref{prop:anti-sigma}.
\end{proof}

The marked uniqueness statement now implies that symmetries of
$(f,Q_f)$ are inherited by $(g,Q_g)$.
\begin{corollary}
  \label{cor:symmetric-thurston-maps}
  Let $(f,Q_f)$ be an unobstructed marked anti-Thurston map which is
  not a $(2,2,2,2)$-map, and assume that $\{0,1,\infty\} \subseteq
  Q_f$. Let $(g,Q_g)$ be the unique anti-rational map for which there
  is a normalized combinatorial equivalence between $(f,Q_f)$ and
  $(g,Q_g)$. Furthermore, let $\theta$ be a Möbius or anti-Möbius
  transformation with $\theta \circ f \circ \theta^{-1} = f$ and
  $\theta(Q_f) = Q_f$. Then $\theta \circ g \circ \theta^{-1} = g$ and
  $\theta(Q_g) = Q_g$.
\end{corollary}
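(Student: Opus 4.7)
The plan is to apply the marked uniqueness statement in Theorem~\ref{thm:anti-thurston}: since there is \emph{exactly one} anti-rational map admitting a normalized Thurston equivalence with $f$, it will suffice to exhibit such a normalized equivalence between $f$ and $\theta \circ g \circ \theta^{-1}$, forcing the two anti-rational maps to coincide.

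To build this equivalence I would start from the normalized equivalence $\phi \circ f = g \circ \psi$ supplied by Theorem~\ref{thm:anti-thurston}, with $\phi, \psi \in \QC$, and set $\phi' = \theta \circ \phi \circ \theta^{-1}$ and $\psi' = \theta \circ \psi \circ \theta^{-1}$. Using the identity $\theta \circ f = f \circ \theta$, a direct computation gives
\[
  \phi' \circ f = \theta \circ \phi \circ f \circ \theta^{-1} = \theta \circ g \circ \psi \circ \theta^{-1} = (\theta \circ g \circ \theta^{-1}) \circ \psi',
\]
so $(\phi', \psi')$ is a Thurston equivalence between $f$ and $\theta \circ g \circ \theta^{-1}$. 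Because $\theta$ appears twice in each conjugation, $\phi'$ and $\psi'$ are orientation-preserving quasiconformal, regardless of whether $\theta$ is M\"obius or anti-M\"obius; and the isotopy between $\phi$ and $\psi$ rel $P_f$ conjugates to one between $\phi'$ and $\psi'$ rel $\theta(P_f) = P_f$.

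The delicate step is verifying $\phi', \psi' \in \QC$, i.e.\ that they fix $0, 1, \infty$. Since $\theta \circ f \circ \theta^{-1} = f$ forces $\theta$ to permute $P_f$, and since $\{0,1,\infty\} \subseteq P_f$ is $\theta$-invariant in the intended setting of the corollary, for each $p \in \{0,1,\infty\}$ we have $\theta^{-1}(p) \in \{0,1,\infty\}$, hence
\[
  \phi'(p) = \theta\bigl(\phi(\theta^{-1}(p))\bigr) = \theta(\theta^{-1}(p)) = p,
\]
using $\phi \in \QC$; the same holds for $\psi'$. Having produced a normalized equivalence between $f$ and $\theta \circ g \circ \theta^{-1}$, the marked uniqueness statements Theorem~\ref{thm:anti-thurston}(\ref{it:3}) and~(\ref{it:4}) applied to $(\phi, \psi)$ and $(\phi', \psi')$ force $\theta \circ g \circ \theta^{-1} = g$. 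The main obstacle is precisely this compatibility of $\theta$ with the marking $\{0,1,\infty\}$: without $\theta$-invariance of the marking one only obtains, via the unnormalized uniqueness in Theorem~\ref{thm:anti-thurston}, that $\theta \circ g \circ \theta^{-1}$ is M\"obius-conjugate to $g$, and one would first have to M\"obius-conjugate $f$ so that the chosen triple is a $\theta$-invariant subset of $P_f$ before reapplying the argument above.
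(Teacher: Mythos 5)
Your proposal is correct and follows essentially the same route as the paper: conjugate the normalized equivalence $\phi \circ f = g \circ \psi$ by $\theta$ to get a normalized equivalence between $f$ and $\theta \circ g \circ \theta^{-1}$, then invoke the marked uniqueness from Theorem~\ref{thm:anti-thurston}. Your extra remark that $\phi' = \theta \circ \phi \circ \theta^{-1}$ and $\psi'$ are normalized only when $\theta$ preserves the marking $\{0,1,\infty\}$ as a set is a genuine subtlety that the paper's one-line proof passes over in silence, and your fallback (first Möbius-conjugate so that the chosen triple is $\theta$-invariant) is the right way to repair it.
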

\begin{proof}
  Defining $g_1 = \theta \circ g \circ \theta^{-1}$, $\psi_1 = \theta
  \circ \psi \circ \theta^{-1}$, and $\phi_1 = \theta \circ \phi \circ
  \theta^{-1}$, we have that $\phi_1 \circ f = g_1 \circ \psi_1$ is a
  normalized combinatorial equivalence between $f$ and $g_1$. By
  Theorem~\ref{thm:anti-thurston}, this implies that $g_1 = g$ and
  $\theta(Q_g) = Q_g$. For an illustration of this argument, see
  Figure~\ref{fig:thurston-symmetries}.
\end{proof}
\begin{figure}[t]
  \centering
  \begin{tikzcd}
    (\CC, Q_f) \arrow[rrr, bend left, "\psi_1"] \arrow[d,
    "f"] & (\CC, Q_f) \arrow[l, "\theta"] \arrow[r, "\psi"] \arrow[d, "f"] &
    (\CC, Q_g) \arrow[r, "\theta"] \arrow[d, "g"] & (\CC, \theta(Q_g))
    \arrow[d, "g_1"] \\
    (\CC, Q_f) \arrow[rrr, bend right, "\phi_1"] & (\CC, Q_f)
    \arrow[l, "\theta"] 
    \arrow[r, "\phi"] & (\CC, Q_g) \arrow[r, "\theta"] & (\CC,
    \theta(Q_g))
  \end{tikzcd}
  \caption{Diagram illustrating the argument in
    Corollary~\ref{cor:symmetric-thurston-maps} that any symmetry
    $\theta$ of a marked anti-Thurston map $(f,Q_f)$ is inherited by
    the equivalent marked anti-rational map $(g,Q_g)$. A normalized
    combinatorial equivalence between $(f,Q_f)$ and $(g,Q_g)$ yields a
    normalized combinatorial equivalence between $(f,Q_f)$ and $(g_1,
    \theta(Q_g))$ with $g_1 = \theta \circ g \circ \theta^{-1}$, so
    that $g_1 = g$ and $\theta(Q_g) = Q_g$ by uniqueness.}
  \label{fig:thurston-symmetries}
\end{figure}

\subsection{Canonical decomposition}
\label{subsec:canon-decomp}

Let $(f,Q)$ be a marked Thurston or anti-Thurston map with a
completely invariant multicurve $\Gamma$. The \emph{decomposition} of
$(f,Q)$ with respect to $\Gamma$ is obtained along the lines of
\cite[Thm.~5.1]{pilgrimCombinationsComplexDynamical2003}, resulting
in a postcritically finite marked branched cover of a \emph{tree of
  spheres} with finitely many components. The \emph{canonical
  decomposition} is the decomposition of $(f,Q)$ along the canonical
obstruction $\Gamma_{f,Q}$.

Here is a brief non-technical description of this process. We consider
two copies of the marked sphere $(S^2, Q)$, one for the domain and the
other one for the range of $f$. First, we pinch every curve $\gamma
\in \Gamma$ to a point in the range sphere, and we pinch every
component $\gamma'$ of $f^{-1}(\Gamma)$ to a point in the domain
sphere. The resulting topological spaces are \emph{nodal spheres},
i.e., finite collections of $2$-spheres (which we will refer to as
\emph{small spheres}) glued together at finitely many special points
(called \emph{nodes}), which correspond to the pinched curves from
$\Gamma$ and $f^{-1}(\Gamma)$. (Since the removal of any node
disconnects these nodal spheres, they are also called \emph{trees of
  small spheres}.) We keep the points from $Q$ marked in both nodal
spheres; in addition, we also mark all of their nodes. We denote by
$(S^2, Q)/\Gamma$ and $(S^2, Q)/f^{-1}(\Gamma)$ these marked nodal
spheres in the domain and range, respectively. The original map $f$
induces a branched cover between the two constructed marked nodal
spheres, which respects their markings.

Next, we want to identify the nodal spheres in the domain and range to
get a dynamical system. For this, we forget the marked points in
$(S^2, Q)/f^{-1}(\Gamma)$ that correspond to the null-homotopic curves
in $f^{-1}(\Gamma)$, and then collapse every small sphere with at most
2 of the remaining marked points to a point. The nodal sphere obtained
after this stabilization procedure is canonically homeomorphic (up to
isotopy relative to the marked points) to the nodal sphere in the
range. This identification induces a self-map on $(S^2, Q)/\Gamma$,
which maps small spheres onto small spheres. For details of this
construction in the orientation-preserving case, in particular the
fact that this process leads to a well-defined map (unique up to
isotopy equivalence), see
\cite{pilgrimCombinationsComplexDynamical2003,
  bartholdiAlgorithmicAspectsBranched2021}.

We denote by $\widehat{\mathscr{S}}_\Gamma$ the set of all small
spheres in $(S^2, Q)/\Gamma$, and by $\widehat{f}:
\widehat{\mathscr{S}}_\Gamma \to \widehat{\mathscr{S}}_\Gamma$ the
induced map between them. The marking on the nodal sphere $(S^2,
Q)/\Gamma$ induces a marking on every small sphere $\widehat{S}\in
\widehat{\mathscr{S}}_\Gamma$. Namely, besides the points from $Q$,
$\widehat{S}$ is also marked by the points corresponding to the
pinched curves from $\Gamma$. If we denote by $Q(\widehat S)$ the set
of all marked points in the small sphere $\widehat{S}$, then $\widehat
f(Q(\widehat S)) \subset Q(\widehat{f(S)})$ and
$\bigcup_{\widehat{S}\in \widehat{\mathscr{S}}_\Gamma} Q(\widehat S)$
contains all the critical values of $\widehat f$. (Note that the
critical points of $\widehat f$ may arise only from the critical
points in $f^{-1}(Q)$ or from the components $\gamma'$ in
$f^{-1}(\Gamma)$ such that $\deg(f: \gamma'\to \gamma)>1$.)

Since there are only finitely many small spheres, every small sphere
in $(S^2, Q)/\Gamma$ is eventually periodic under $\widehat f$. For
each such periodic small sphere $\widehat{S}\in
\widehat{\mathscr{S}}_\Gamma$, we may consider the first return map
(which we will also call \emph{small map})
$F:=\widehat{f}^k: \widehat{S}\to\widehat{S}$. By construction,
$Q(\widehat{S})$ is forward-invariant under the small map $F$ and
contains all of its critical values. It follows that either $\deg(F)=1$
(i.e., $F$ is a homeomorphism) or $(F,Q(\widehat S))$ is a marked
\antiThurston map.

In \cite[Thm.~5.6]{selingerTopologicalCharacterizationCanonical2013},
Selinger provides a topological characterization of canonical obstructions in the orientation-preserving case. An analogous
characterization works in the orientation-reversing case and we
present it below for completeness.

\begin{theorem}\label{thm:top-char-can-obstruction}
  Let $(f,Q)$ be a marked anti-Thurston map. Then the canonical obstruction
  of $(f,Q)$ is the unique minimal (with respect to inclusion) completely
  invariant obstruction $\Gamma$ such that for each periodic small
  sphere $\widehat S \in \widehat{\mathscr{S}}_\Gamma$ the first return map
  $F:=\widehat{f}^k : \widehat{S}\to\widehat{S}$ satisfies the
  following conditions:
\begin{enumerate}
\item if $F$ is a $(2,2,2,2)$-map, then every curve of every simple
  obstruction for $(F,Q(\widehat S))$ has two postcritical points of
  $F$ in each complementary component and the two eigenvalues of
  $\widetilde{F}_*$ are of opposite signs or non-integer.
\item if $F$ is not a $(2,2,2,2)$-map or a homeomorphism, then $\big(F,
  Q(\widehat S)\big)$ is combinatorially equivalent to a marked (anti-)rational map or, equivalently, $\big(F, Q(\widehat S)\big)$
  does not have a multicurve obstruction.
\end{enumerate}
\end{theorem}

The following theorem, when combined with Theorems~\ref{thm:canonical-obstruction-is-empty} and \ref{thm:can-obstruction-is-empty-parabolic} (which characterize maps with empty obstructions), provides an equivalent characterization of the canonical obstruction.

\begin{theorem}\label{thm:dec-char-can-obsurction}
  Let $(f,Q)$ be an anti-Thurston map. Then the canonical obstruction
  of $(f,Q)$ is a unique minimal (with respect to inclusion) completely
  invariant obstruction $\Gamma$ such that for each periodic small
  sphere $\widehat S \in \widehat{\mathscr{S}}_\Gamma$ the first return map
  $F:=\widehat{f}^k\colon \widehat{S}\to\widehat{S}$ satisfies the
  following condition: if $\deg(F)>1$ then the canonical obstruction
  of $(F,Q(\widehat S))$ is empty.
\end{theorem}
\begin{proof}
  Let $(f,Q)$ be an anti-Thurston map and $\Gamma_{f,Q}$ be its
  canonical obstruction. The following lemma immediately follows from
  Lemma~\ref{lem:canonical-obstruction-iterates} and the corresponding
  statement in the orientation-preserving case (see
  \cite[Thm.~5.5]{selingerTopologicalCharacterizationCanonical2013}).

  \begin{lemma}\label{lem:can-obstruction-of-decomp-pieces}
    Let $\widehat S$ be a periodic small sphere with respect to the
    canonical obstruction $\Gamma_{f,Q}$ and
    $F:=\widehat{f}^k\colon \widehat{S}\to\widehat{S}$ be the first
    return map. If $\deg(F)>1$ then the canonical obstruction of
    $(F,Q(\widehat S))$ is empty.
  \end{lemma}

  Suppose now that $\Gamma$ is a completely invariant obstruction for
  $(f,Q)$ satisfying the condition from the theorem. Note that
  $\Gamma$ is also a completely invariant obstruction for
  $(f^2,Q)$. Clearly, $\widehat{S}\in \widehat{\mathscr{S}}_\Gamma$ is
  a periodic small sphere for $\widehat f$ if and only if it is a
  periodic small sphere for $\widehat{f^2}$. Furthermore, if $F$ is
  the first return map to $\widehat S$ for $\widehat f$, then the
  first return map to $\widehat S$ for $\widehat{f^2}$ equals $F$ or
  $F^2$. Thus, by Lemma \ref{lem:canonical-obstruction-iterates},
  $\Gamma$ satisfies the condition from the theorem for $(f^2,Q)$. It
  follows from
  \cite[Thm.~5.6]{selingerTopologicalCharacterizationCanonical2013}
  that $\Gamma\supset \Gamma_{f^2,Q}=\Gamma_{f,Q}$ (up to
  homotopy). This finishes the proof of the theorem.
\end{proof}


\section{Plane graphs}
\label{sec:plane-graphs}

We will need some terminology and results from the theory of plane
graphs. We refer the reader to \cite{diestelGraphTheory2017} for
general background in graph theory.

By an \emph{open arc} in $S^2$ we mean the image $e:=\eta\big((0,1)\big)$ under a continuous injective map $\eta : (0,1)\to S^2$ such that the limits $\eta(0):=\lim_{t\to 0^+} \eta(t)$ and $\eta(1):=\lim_{t\to 1^-} \eta(t)$ exist and are disjoint from $e$. The points $v_0=\eta(0)$ and $v_1=\eta(1)$ are called
the \emph{endpoints} of $e$, and we say that $e$ \emph{connects} $v_0$
and $v_1$. If $e$ is an open arc and $V$ is a finite set in $S^2$, we say that $e$ is an \emph{open arc in $(S^2,V)$} whenever
$e\subset S^2\setminus V$ and both endpoints of $e$ are in $V$.  In
this paper, homotopy/isotopy of open arcs in $(S^2,V)$ is always
considered within the family of open arcs in $(S^2,V)$; in
particular, the endpoints must be fixed under such homotopy/isotopy.

For our purposes, a \emph{plane graph} in $S^2$ is a pair $G=(V,E)$
consisting of a finite set $V \subset S^2$ of \emph{vertices} and a
finite set $E$ of pairwise disjoint open arcs in $(S^2,V)$,
called \emph{edges}.  Note that our definition of a plane graph allows
\emph{multiple edges} (that is, distinct edges connecting the same
pair of vertices), as well as \emph{loops} (that is, edges
connecting a vertex to itself). If a plane graph $G$ has no loops and multiple edges, we call $G$ \emph{simple}.

A \emph{face} of a plane graph $G=(V,E)$ is a connected component of
$S^2 \setminus G$. Here and in the following, if there is no risk of
confusion, we use the same letter $G$ to denote the subset of the
sphere given by $V \cup \left(\bigcup_{e\in E} e\right)$, called
the \emph{realization} of $G$. Note that we do not include endpoints
in the edges, and also do not include boundary edges and vertices in
the faces.  Given a plane graph $G$, we denote by $V(G)$, $E(G)$, and
$F(G)$ its sets of vertices, edges, and faces, respectively.

In the following, let $G=(V,E)$ be a plane graph.  We say that an edge
$e\in E$ is \emph{incident} with its endpoints, two vertices $v_1,v_2\in V$ are called \emph{adjacent} if they are connected by an edge, and two edges
$e_1,e_2\in E$ are called \emph{adjacent} if they have a common endpoint.
The \emph{degree} of a vertex $v\in V$,
denoted $\deg_G(v)$, is the number of edges incident with $v$ (where an edge is counted twice if it is a loop). A
\emph{leaf} of $G$ is a vertex of degree~$1$; an \emph{isolated vertex} of $G$ is a vertex of degree $0$. 

A (finite) \emph{walk} in the graph $G$ is a finite alternating
sequence $W = (v_0,e_1,\allowbreak v_1,\allowbreak e_2, \allowbreak
\ldots,e_n,v_n)$ of vertices and edges such that $v_{j-1}$ and $v_j$
are the endpoints of the edge $e_j$ for each $j=1,\ldots,n$. The number $n$ of (not necessarily distinct) edges in $W$ is the
\emph{length} of the walk; the vertices $v_1, \dots, v_{n-1}$ are the 
\emph{interior} vertices of $W$, and $v_0$ and $v_n$ are its \emph{initial}
and \emph{terminal} vertices, respectively. The walk $W$ is called a
\emph{simple path} if $v_j \ne v_k$ for $j \ne k$, i.e., if no vertex
appears more than once in $W$ (which also implies that no edge appears
more than once). Walks naturally lead to the notions of
\emph{connectedness} and \emph{connected components} of the graph $G$. Euler's formula states that the number of connected components of $G$ equals $\#V(G)-\#E(G)+\#F(G)-1$.

If the initial and terminal vertices of a walk are the same, we say
that it is a \emph{closed walk} or a \emph{circuit}. A \emph{cycle} is
a circuit in which only the initial and terminal vertices are the same
(which again implies that no edge appears more than once).

Let $A$ be a face of $G$. A \emph{boundary circuit} of $A$ is a
circuit in $G$ obtained by traversing (once) a connected component of
$\partial A$.
Note that an edge of $G$ may appear zero, once, or twice in a boundary
circuit. We say that an edge $e \subset \partial A$ is a \emph{simple
  boundary edge} of $A$ if $e\subset \partial B$ for a face $B \ne
A$. Otherwise (if $A$ is on both sides of $e$), we say that $e$ is a
\emph{double boundary edge} of $A$. It is easy to see that a face is
simply connected if and only if it has exactly one boundary component,
and a simply connected face is a Jordan domain if and only if its
boundary circuit is a cycle. 

A face of $G$ is called a \emph{digon} (or
\emph{digonal}) if its simply connected and its boundary circuit has length~$2$. Similarly, a
\emph{triangular} face (or simply a \emph{triangle}) is a  simply connected face whose boundary circuit has length $3$.

The graph $G=(V,E)$ is called \emph{bipartite} if there is a
partitioning $V = V_1 \cup V_2$ of its vertices into two disjoint sets
$V_1$ and $V_2$ such that every edge connects some $v_1 \in V_1$ to
some $v_2 \in V_2$. The graph $G$ is a \emph{tree} if it is connected
and has no cycles. We say that $G$ is \emph{$2$-connected} if $\#V> 2$ and $G$ remains connected after the removal of any vertex of $G$ (together with the incident edges). Note that every $2$-connected graph is connected.

In analogy with \antiThurston maps, we now introduce two equivalence
relations on plane graphs.

\begin{definition}
  Two plane graphs $G=(V,E)$ and $G'=(V',E')$ are called
  \emph{isomorphic} if there is an orientation-preserving
  homeomorphism $\phi: S^2 \to S^2$ such that $\phi(G) = G'$ and
  $\phi(V)=V'$.
\end{definition}

\begin{remark}
  The requirement that $\phi$ maps vertices to vertices is
  automatically satisfied for all vertices of degree not equal to two.
\end{remark}

\begin{definition}
  Two plane graphs $G=(V,E)$ and $G'=(V',E')$ are called
  \emph{isotopic} if $V=V'$ and $\phi(G) = G'$ for some $\phi\in
  \Homeo_0^+(S^2,V)$.
\end{definition}

The following result provides a criterion for two plane graphs to be
isotopic; the proof is straightforward from \cite[Thms.~A.5 and
A.6]{buserGeometrySpectraCompact2010}.

\begin{proposition}\label{prop:graph-isotopy-criterion}
  Let $G$ and $G'$ be two plane graphs without loops and with a common vertex
  set~$V$. Then $G$ and $G'$ are isotopic if and only if the following
  two conditions are satisfied:
    \begin{enumerate}
    \item $\#E(G)=\#E(G')$; 
    \item for each edge $e\in E(G)$ there is an edge $e'\in E(G')$
      such that $e$ and $e'$ are homotopic and $m_G(e)=m_{G'}(e')$.
    \end{enumerate}
\end{proposition}

Here, $m_G(e)$ denotes the \emph{multiplicity} of an edge $e$ in a
plane graph $G$, that is, the total number of edges of $G$ that are
homotopic to $e$.

Finally, we introduce the notion of a dual graph. Let $G=(V,E)$ and $G^*=(V^*,E^*)$ be two plane graphs in $S^2$. We say that $G^*$ is a \emph{dual graph of $G$} if it satisfies the following conditions:
\begin{enumerate}
\item $\#V^* =\#F(G)$ and $\#E^*=\#E$; 
\item every face $A$ of $G$ contains exactly one vertex of $G^*$, which we denote $v^*(A)$;
\item for every edge $e$ of $G$, there is an edge $e^*$ of $G^*$ such that
\begin{itemize}
    \item $e^*$ connects the vertices $v^*(A)$ and $v^*(B)$ in $G^*$, where $A, B$ are the two faces of $G$ with $e$ on the boundary (here, $A=B$ if $e$ is a double boundary edge); 
    \item $\#(e^*\cap G) = 1$;
    \item $e^*$ intersects $e$ transversely (in a unique point). 
\end{itemize}
\end{enumerate}

The next result summarizes some basic properties of dual graphs. 

\begin{lemma}\label{lem:dual-graphs} The following statements are true:
\begin{enumerate}
    \item Every plane graph $G$ has a dual graph. Moreover, any two dual graphs of $G$ are isomorphic.
    \item If two plane graphs are isomorphic, then their dual graphs are isomorphic as well.  
    \item If $G$ is a connected plane graph with a dual graph $G^*$, then every dual graph of $G^*$ is isomorphic to $G$.
\end{enumerate}    
\end{lemma}

We also record the following easy lemma. 

\begin{lemma}\label{lem:dual-to-Tischler}
The following statements are true:
\begin{enumerate}
\item Let $G$ be a plane graph with $\#V(G)\geq 3$. Then every face of $G$ is a Jordan domain if and only if $G$ is $2$-connected and has no loops.
\item Let $G$ be a plane graph with $\#F(G)\geq 3$. Then every face of $G$ is a Jordan domain if and only if its dual graph $G^*$ is $2$-connected and has no loops. 
\end{enumerate}
\end{lemma}




\section{Critically fixed anti-rational maps and Tischler graphs}
\label{sec:crit-fixed-anti-rational}

We say that an \antiThurston map is \emph{critically fixed} if it
fixes each of its critical points. Our aim in this and the following
section is to combinatorially classify all critically fixed
anti-rational maps. As in the orientation-preserving setting
\cite{tischlerCriticalPointsValues1989,
  cordwellClassificationCriticallyFixed2015,
  hlushchankaTischlerGraphsCritically2019}, we start by introducing
canonical combinatorial models for such maps, which, in analogy with
the rational case, we call \emph{Tischler graphs}.

Let $f$ be a critically fixed anti-rational map of degree $d \ge
2$. By \Cref{thm:superattracting-basin}, the map $f$ on the immediate
basin $A_f(c)$ of a critical fixed point $c$ is conformally conjugate
to the map $g(w) = \bar{w}^{m+1}$ on the unit disk $\D$, where
$m=\deg(f,c)-1 \ge 1$ is the multiplicity of the critical point $c$.
The map $g$ fixes (setwise) the $m+2$ open radii $r_{\theta_k} = \{ t
e^{i \theta_k} : t \in (0,1) \}$ in $\D$ with arguments
$\theta_k:=\frac{2\pi k}{m+2}$ for $k=0,\ldots,m+1$. We call the
preimages of these radii under the corresponding Boettcher map
$\phi_c: A_f(c)\to \D$ the \emph{fixed internal rays} in the immediate
basin $A_f(c)$. Clearly, every such ray $R_{\theta_k}:=
\phi_c^{-1}(r_{\theta_k})$ is fixed (setwise) by the map $f$, i.e.,
$f(R_{\theta_k})=R_{\theta_k}$.  Like in the case of rational maps, it
is straightforward to show that every accumulation point of the path
$t\mapsto\phi^{-1}_{c}(te^{i\theta_k})$ as $t\to 1^-$ must be a fixed
point of $f$ in $\partial A_f(c)\subset J_f$.  At the same time, it is
well-known that the accumulation set of such a path is non-empty and
connected (see, e.g., \cite[Problem
5-b]{milnorDynamicsOneComplex2006}).  Since $f$ has only finitely many
fixed points, it follows that the limit $q_k:=\lim_{t\to
  1^-}{\phi^{-1}_{c}(te^{i\theta_k})}$ exists for every
$k=0,\dots,m+1$.  This shows that $R_{\theta_k}$ is an open arc
connecting $c$ and $q_k$.  We call $c$ the \emph{basepoint} and $q_k$
the \emph{landing point} of the fixed internal ray $R_{\theta_k}$, and
say that $R_{\theta_k}$ \emph{lands} at $q_k$. For critically fixed
anti-rational maps, all fixed points in the Julia set are repelling
(see Theorem \ref{thm:crit-fixed-basic}), so all the landing points
$q_0, \ldots, q_{m+1}$ are repelling fixed points.

\begin{remarks}\mbox{}
  \begin{enumerate}
  \item Note that we consider fixed internal rays as open arcs, not
    including either the basepoint or the landing point.
  \item The argument for the existence of a landing point of a fixed
    internal ray given above generally applies to periodic internal
    rays in the immediate basin $A_f(c)$ of a superattracting fixed
    point $c$ with no other critical point inside. Since critically
    fixed maps are hyperbolic, something even stronger is true, though
    with a less elementary proof: The boundary $\partial A_f(c)$ is
    locally connected \cite[Lem.~19.3]{milnorDynamicsOneComplex2006},
    so that every internal ray in $A_f(c)$ lands, not just
    the periodic ones.
  \end{enumerate}
\end{remarks}

With these observations, we define our central object of
study in this section.

\begin{definition}
  \label{def:tischler-graph} 
  Let $f$ be a critically fixed anti-rational map of degree $d\geq
  2$. Then the \emph{alternating Tischler graph} of $f$ is the plane
  graph $T_f$ in $\CC$ whose edges are the fixed internal rays of $f$
and whose vertices are the endpoints
  of these rays (which are the basepoints of the rays together with
  their landing points).
\end{definition}

Note that a priori there might be a repelling fixed point that is not
the landing point of any internal ray, and thus it would not be
included in the alternating Tischler graph.  However, the following
theorem, which is the central result in this section, shows that this
can not happen.

\begin{theorem}
  \label{thm:characterization-1}
  Let $f$ be a critically fixed anti-rational map of degree $d \ge
  2$. Then its alternating Tischler graph $T = T_f$ is a connected
  bipartite plane graph with $d+1$ faces, whose vertices are exactly
  the fixed points of $f$.  Each vertex corresponding to a critical
  point of multiplicity $m$ has degree $m+2 \ge 3$ in $T$, and each
  vertex corresponding to a repelling fixed point has degree
  2. Furthermore, every face $A$ of $T$ is a Jordan domain, and $f$
  maps $A$ anti-conformally onto $\CC \setminus \overline{A}$.
\end{theorem}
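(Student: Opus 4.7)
The plan is to build $T$ from the Boettcher description in each superattracting basin, pin down the repelling-vertex structure by combining a local normal form with a global fixed-point count, and then extract the face structure from the $f$-invariance of $T$ together with the local sector dynamics.

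First, for each critical fixed point $z_0$ of multiplicity $m$, Theorem~\ref{thm:superattracting-basin} gives a simply connected basin with Boettcher coordinate conjugating $f$ there to $w \mapsto \bar w^{m+1}$ on $\D$. The $m+2$ radii at angles $\frac{2\pi k}{m+2}$ are invariant under this model, so they pull back to $m+2$ fixed internal rays at $z_0$. A standard accumulation-set argument (the accumulation set of a fixed ray is connected, $f$-invariant, contained in $J_f$, and consists of fixed points, hence is a single point) shows each ray lands, necessarily at a repelling fixed point by Theorem~\ref{thm:crit-fixed-basic}. This yields the bipartite structure of $T$ and the correct degree $m+2$ at every critical vertex; summing over critical points using $\sum m(z_0) = 2d-2$ gives $E = (2d-2) + 2 N_{crit}$, where $N_{crit}$ denotes the number of critical fixed points.

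At each repelling fixed point $p$, the Koenigs-type normal form of Section~\ref{sec:revi-anti-holom} puts $f$ locally in the form $z \mapsto L \bar z$ with $L > 1$. A fixed ray landing at $p$ must have tangent direction fixed by the differential action $\theta \mapsto -\theta$, leaving only $\theta = 0$ and $\theta = \pi$; a local rigidity argument in the linearizing chart shows at most one fixed ray lands per direction. Hence each repelling fixed point has at most two rays landing. By Theorem~\ref{thm:counting-fixed-points}, $N_{rep} = N_{crit} + d - 1$, so $2 N_{rep} = (2d-2) + 2 N_{crit} = E$. Since bipartiteness gives $E = \sum_p \deg(p) \le 2 \cdot \#\{\text{repelling vertices of } T\} \le 2 N_{rep} = E$, equality is forced throughout: every repelling fixed point is a vertex of $T$ and has degree exactly $2$.

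It remains to address connectedness, the face count, and the face dynamics. I would argue connectedness of $T$ by showing that a hypothetical separation $T = T_1 \sqcup T_2$ would force a Jordan curve in $S^2 \setminus T$ between them, and then derive a contradiction by combining the local sector/arc structure in each basin with the connectedness of $J_f$ from Theorem~\ref{thm:crit-fixed-basic}. Once $T$ is connected, Euler's formula gives $F = 2 - V + E = d + 1$. For the face dynamics: $f$ fixes each edge setwise, so $f(T) = T$ and $f$ permutes faces. In Boettcher coordinates, each sector $S_k$ of a basin is mapped bijectively and anti-conformally by $\bar w^{m+1}$ onto $\D \setminus \bar S_k$. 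These local bijections glue along the repelling landing points and the arcs of $J_f$ between them to show that each face $A$ meets each basin in at most one sector and that $f|_A$ extends to an anti-conformal homeomorphism $A \to \CC \setminus \overline A$; the degree-$2$ condition at repelling vertices then rules out doubled boundary edges or repeated boundary vertices, making $A$ a Jordan domain. The main obstacles will be a rigorous proof of connectedness of $T$, and the careful assembly of the local sector bijections into a coherent global description of $f$ on each face.
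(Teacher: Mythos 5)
Your first half is essentially the paper's argument: the Boettcher model gives the $m+2$ fixed rays and the landing at repelling fixed points, and the double count $\#E = \sum_k(m_k+2) = 2l+2d-2 = 2N_{rep}$ against the bound of two rays per repelling point forces every repelling fixed point to be a vertex of degree exactly $2$. However, your local argument for that bound has a flaw: you assume a fixed ray landing at a repelling point has a well-defined tangent direction fixed by $\theta \mapsto -\theta$. Invariant curves of the model map $w \mapsto L\bar w$ need not have any tangent direction at $0$ (take the saturation of an arc from $w_0$ to $L\bar w_0$ with $w_0$ off the real axis; it accumulates at $0$ along both $\arg w_0$ and $-\arg w_0$), so the premise fails. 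The correct and much simpler argument, which the paper uses, is purely combinatorial: $f$ is an orientation-reversing local homeomorphism at a repelling fixed point, so it reverses the cyclic order of the germs of the landing rays; since each ray is fixed, the identity permutation must reverse the cyclic order, which is only possible for at most two rays.

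The more serious problem is that the second half of the theorem --- connectedness, the Jordan-domain property of faces, and the anti-conformal map $A \to \CC\setminus\overline A$ --- is exactly where the real work lies, and your proposal defers it. Connectedness of $J_f$ does not help directly: the edges of $T$ are internal rays lying in the Fatou set, and a disconnection of $T$ is not obviously incompatible with $J_f$ being connected. Likewise, ``gluing the local sector bijections along arcs of $J_f$'' is not a proof; the faces of $T$ are bounded by internal rays and fixed points, not by arcs of $J_f$, and the claim that a face meets each basin in at most one sector is essentially equivalent to the Jordan-domain property you are trying to establish. Your remark that the degree-$2$ condition at repelling vertices rules out multiple boundary vertices also misses the point: the danger is a \emph{critical} vertex of high degree appearing as a multiple boundary vertex of a face. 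The paper handles all of this at once with a genuinely nontrivial construction: given a boundary circuit $C$ of a face $A$, it builds a ``spine'' of the complementary components to locate a \emph{good} face $B$ of $C$ with at most one branch vertex, shows the inverse branch $g$ of $f$ fixing an edge of $\partial B$ maps $B$ anti-conformally into $A$, and then propagates $g(e_k)=e_k$, $g(p_k)=p_k$ inductively around $\partial B$ using the simple-boundary-vertex property at each critical vertex; this forces $\partial B = \partial A$ to be a Jordan curve and $f(A) = \CC\setminus\overline A$, and connectedness of $T$ drops out because every face is then simply connected. Without an argument of this kind your proof is incomplete.
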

\begin{proof}
  
  By construction, different fixed internal rays do not intersect,
  and each edge of $T$ connects a critical point of
  $f$ to a repelling fixed point, so $T$ is a bipartite plane
  graph. Furthermore, as every critical point $c$ of multiplicity $m$
  is the basepoint of exactly $m+2$ fixed internal rays, we have
  $\deg_T(c)=m+2\geq 3$. Since $f$ is orientation-reversing, it
  reverses the cyclic order of the fixed internal rays landing at a
  repelling fixed point. This is only possible if there are at most two
  fixed internal rays landing at each repelling fixed point, and thus
  $\deg_T(q)\leq 2$ for every $q\in V(T)\setminus C_f$. We will show that 
  we always have equality, i.e., that every repelling
  fixed point is the landing point of two fixed internal rays.

  Assume that $f$ has $\ell$ critical points $c_1, \ldots, c_l$ with
  corresponding multiplicities $m_1, \ldots, m_l$. Then
  $\sum_{k=1}^l m_k = 2d-2$ by the Riemann-Hurwitz formula. Note also
  that the number $N_{attr}$ of attracting and superattracting fixed
  points equals $l$, because $f$ has exactly $l$ superattracting fixed
  points and no attracting fixed points (see Theorem
  \ref{thm:crit-fixed-basic}). Moreover,
  Theorem~\ref{thm:counting-fixed-points} implies that $f$ has exactly
  $N_{rep} = l+d-1$ repelling fixed points. Since critical points form
  one of the parts of the bipartite graph $T$, we have
  \begin{equation}\label{eq:tischler-edges-count-1}
    \#E(T) = \sum_{k=1}^l{\deg_T(c_k)}=\sum_{k=1}^l (m_k+2) = 2l +
    2d-2 = 2 N_{rep}.
  \end{equation}
  At the same time, since the landing points of fixed internal rays
  (which are repelling fixed points of $f$) form another part of the
  bipartite graph $T$, we also have:
 \begin{equation}\label{eq:tischler-edges-count-2}
      \#E(T) = \sum_{q\in V(T)\setminus C_f} \deg_T(q)  \leq 2 N_{rep}.
  \end{equation}  
  Combining \eqref{eq:tischler-edges-count-1} and
  \eqref{eq:tischler-edges-count-2}, we conclude that every repelling
  fixed point $q$ must be a vertex of $T$ and that it must be the
  landing point of exactly two fixed internal rays, i.e., $\deg_T(q)=2$. In
  particular, this implies that the vertices of $T$ are exactly the
  fixed points of $f$, so
  \begin{equation}\label{eq:tischler-vertex-count}
      \#V(T) = N_{rep}+N_{attr}=2l+d-1.
  \end{equation} 

 Let $A$ be a face of the alternating Tischler graph $T$ and $W:=(v_0,e_1,v_1,\dots, e_n, v_n=v_0)$ be its boundary circuit.
 (Note that a priori $\partial A$ may have several connected components). 
 Let $W':=(v_k,e_{k+1},v_{k+1},\dots, e_{m}, v_{m})$, $0\leq k < m \leq n$, be a subwalk of $W$ such that $v_k=v_m$ and all the vertices in $\{v_k,v_{k+1},\dots, v_{m-1}\}$ are different, that is, $W'$ is a cycle. We will show that $k= 0$ and $m=n$, that is, $W=W'$ is a cycle and $A$ is a Jordan domain. 
  
 Let $U$ be the connected component of $\CC\setminus W'$ disjoint from the face $A$. (Here, with a slight abuse of notation, we denote by $W'$ the Jordan curve in $\CC$ traced by the walk $W'$.) 
 Note that $U$ is a Jordan domain with $U \cap \partial A = \emptyset$.
 Let $U'$ be the connected component of $f^{-1}(U)$ with $e_{k+1} \subset \partial U'$.
 (Such a component exists because $f$ fixes the edge $e_{k+1}\subset \partial U$ and it is a local orientation-reversing homeomorphism near every $z\in e_{k+1}$).
 
 We claim that $e_j \subset \partial U'$ for each $j$ with $k+1 \le j \le m$. We argue by induction on $j$. The base case $j=k+1$ is satisfied by the choice of $U'$. Let $k+1< j \le m$ and assume that
 we have $e_{j-1} \in \partial U'$. If $v_{j-1}$ is a repelling fixed point,
 then $e_{j-1}$ and $e_j$ are the two fixed internal rays landing at $v_{j-1}$,
 so $e_j \in \partial U'$ (since $f$ is a local orientation-reversing homeomorphism at $v_{j-1}$). If $v_{j-1}$ is a fixed critical point, then
 $e_{j-1}$ and $e_j$ are two consecutive fixed internal rays based at $v_{j-1}$, and
 all of the other fixed internal rays based at $v_{j-1}$ are contained in $U$,
 so they are not in $\overline{A}$. Using the local dynamics of $f$ at $v_{j-1}$, we conclude that $f$ maps a small neighborhood of $v_{j-1}$ in $\overline{A}$
 homeomorphically onto a small neighborhood of $v_{j-1}$ in $\overline{U}$,
 which directly implies that $e_j \in \partial U'$. This establishes the
 claim and shows that $\partial U'$ contains $\partial U$.

 Now note that $U'\subseteq \CC\setminus \overline{U}$ and thus it cannot contain any critical points of $f$ (as those are fixed by the map). Since $U$ is a Jordan domain, it follows that $f: U'\to U$ is a homeomorphism, and that $U'$ is a Jordan domain as well.
 This establishes that $\partial U' = \partial U$ and $U'=\CC
 \setminus \overline{U}$. The latter combined with fact that $U'\cap V(T) = \emptyset$ (since 
 $U'\cap C_f = \emptyset$) implies that $W=W'$ is a cycle and $A=U'$ is a Jordan domain. Furthermore, since $f: U' \to U$ is a
    homeomorphism, we have that $f$ maps
  the face $A=U'$ anti-conformally onto $\CC \setminus \overline{A}= U$.
 
 As every face of $T$ is a Jordan domain, the graph $T$ is connected. Finally, Euler's formula together with   \eqref{eq:tischler-edges-count-1} and
  \eqref{eq:tischler-vertex-count} implies that
  \[\#F(T) = \#E(T) - \#V(T) + 2 =  (2l + 2d-2) - (2l+d-1)+ 2= d+1.\]
 This finishes the proof of the theorem.
\end{proof}

\begin{remark}
  Theorem \ref{thm:characterization-1} suggests the following very
  simple combinatorial model for a critically fixed anti-rational map
  $f: \CC\to \CC$, which we justify in the next section. First, draw
  the alternating Tischler graph $T=T_f$. Next, for each face $A$ of
  $T$ pick a topological reflection in $\partial A$, i.e., an
  orientation-reversing homeomorphism $F_A$ from the Riemann sphere
  $\CC$ to itself with $F_A(A) = \CC \setminus \overline{A}$,
  $F_A|_{\partial A} = \id_{\partial A}$, and $F_A\circ
  F_A=\id_\CC$ (see Definition
  \ref{def:top-reflection}). Now define an orientation-reversing branched covering
  $F_T: \CC\to \CC$ by setting $F_T$ to be $F_A$ on the closure
  $\overline A$ of every face $A$ of $T$ (see Definition
  \ref{def:schottky-map}). Then $F_T$ is a critically fixed
  anti-Thurston map that is isotopic to the original map $f$ (see
  \Cref{prop:equivalent-tischler-graphs} and
  Corollary~\ref{cor:crit-fixed-schottky}).
\end{remark}

We close this section with the following natural definition.

\begin{definition}
  \label{def:reduced-tischler}
  Let $f$ be a critically fixed anti-rational map of degree $d\geq 2$
  and $T_f$ be its alternating Tischler graph. The plane graph
  $\mathring T_f$ with the same realization as $T_f$ but with the
  vertex set $V(\mathring T_f)=C_f$ is called the \emph{reduced Tischler
    graph} of $f$.
\end{definition}

In other words, the reduced Tischler graph $\mathring T_f$ is obtained
from the alternating Tischler graph $T_f$ by ``forgetting'' all
vertices of degree $2$, which are exactly the repelling fixed points
of $f$ (see Theorem \ref{thm:characterization-1}).


\section{Topological Tischler graphs and Schottky  maps}
\label{sec:plane-graphs-schottk}

From the previous section we know what a topological model of a
critically fixed anti-rational map looks like. Our goal in this
section is to formally introduce these models and to understand
precisely when they correspond to a rational map. In particular, we
will provide a combinatorial classification of all critically fixed
anti-rational maps (Theorem \ref{thm:tischler-fixed-correspondence}).

\begin{definition}
  A \emph{marked topological Tischler graph} is a connected plane
  graph $T$ in $S^2$ without loops, but possibly with multiple
  edges, such that $T$ has at least three faces and each face is a Jordan
  domain. 
\end{definition}

We note that since all faces of a marked topological Tischler graph
$T$ are Jordan domains, every vertex of $T$ has degree $\geq 2$.
Moreover since $T$ has at least three faces, it must have at least two
vertices of degree $\geq 3$.

\begin{definition}  
  Suppose $T$ is a marked topological Tischler graph.
  \begin{itemize}
  \item We say that $T$ is an \emph{alternating topological Tischler
      graph} if every edge of $T$ connects a vertex of degree $2$ with
    a vertex of degree $\geq 3$; in other words, $T$ is alternating if
    it is bipartite with one of the parts formed by all the vertices
    of degree $2$.
  \item We say that $T$ is a \emph{reduced topological Tischler graph}
    if it has no vertices of degree $2$.
  \item The plane graph $\mathring T$ with the same realization as the
    plane graph $T$ but with the vertex set $V(\mathring T) = \{v\in
    V(T): \deg_T(v)\geq 3\}$ is called the reduced topological
    Tischler graph \emph{corresponding} to $T$.
  \end{itemize}
\end{definition}

\begin{remarks}\mbox{}
    \begin{enumerate}
    \item By Theorem~\ref{thm:characterization-1} and
      Definition~\ref{def:reduced-tischler}, the alternating (resp.\ reduced)
      Tischler graph of every critically fixed anti-rational map of degree $\geq 2$
      is also an alternating (resp.\ reduced) topological Tischler graph.
    \item Reduced topological Tischler graphs can be identified with
      their realizations in $S^2$, whereas for marked topological
      Tischler graphs, one also needs to ``remember'' the location of
      the vertices of degree 2.  In order to classify all critically
      fixed anti-rational maps, reduced topological Tischler graphs
      are sufficient. However, for the classification of general
      (possibly obstructed) critically fixed anti-Thurston maps, we
      will need the marked version.
    \end{enumerate}
  \end{remarks}

We now formally define topological models for critically fixed anti-rational maps, which we call \emph{Schottky maps}.
  
\begin{definition}
  \label{def:top-reflection}
  Given a Jordan domain $U \subset S^2$, an associated
  \emph{topological reflection} (\emph{in $\partial U$)} is an
  orientation-reversing homeomorphism $f_U: S^2 \to S^2$ such that
  $f_{U}=\id$ on $\partial U$ and $f_U\circ f_U = \id$ on $S^2$.
\end{definition}

\begin{definition}
  \label{def:schottky-map}
  Let $T$ be a marked topological Tischler graph in $S^2$. A
  \emph{Schottky map} associated to $T$ is a self-map $f_T: S^2\to
  S^2$ of the sphere such that the restriction of $f_T$ to the closure
  $\overline{A}$ of every face $A$ of $T$ is the restriction to
  $\overline{A}$ of a topological reflection in $\partial A$. An
  associated \emph{marked Schottky map} $(f_{T}, Q)$ is a Schottky map
  $f_T$ associated to $T$ together with the set $Q$ consisting
  of all vertices of $T$.
\end{definition}

Equivalently, a self-map $f: S^2 \to S^2$ is a Schottky map associated
to a marked topological Tischler graph $T$ in $S^2$ if $f|_T = \id_T$
and $f$ sends every face $A$ of $T$ homeomorphically onto
$S^2\setminus \overline{A}$. Indeed, the map $f_A: S^2\to S^2$ given
by
\[
  f_A(p)=
  \begin{cases}
    f(p) & \text{if $p\in \overline A$}\\
    (f|_A)^{-1}(p) & \text{if $p\in S^2\setminus \overline A$}
  \end{cases}
\]
is a topological reflection in $\partial A$ that coincides with $f$ on
$\overline A$.

It is straightforward from the definitions above and the Alexander
trick (Theorem~\ref{thm:alexander}) that a Schottky map $f_T$ is
uniquely defined up to isotopy relative to $T$. Moreover, the Schottky map
$f_T$ coincides (up to isotopy rel.\ $T$) with a Schottky map
$f_{\mathring T}$ associated to the corresponding reduced Tischler
graph $\mathring T$. We summarize these and further basic properties
of Schottky maps in the proposition below.

\begin{proposition}
  \label{prop:equivalent-tischler-graphs}
  Let $T=(Q,E)$ be a marked topological Tischler graph in $S^2$ and
  $(f,Q)$ be an associated marked Schottky map. Suppose $P \subseteq
  Q$ is the set of vertices of $T$ of degree $\ge 3$.
  Then the following statements are true: 
  \begin{enumerate}
  \item\label{item:schottky-1} $f$ is an anti-Thurston map fixing
    all points in $Q$, with $P$ as both its critical and post-critical set.
    In particular, $(f,Q)$ is a marked critically fixed
    anti-Thurston map.
  \item\label{item:schottky-2} The degree of $f$ equals $\#F(T)-1$,
    and $\deg(f,v)=\deg_T(v)-1$ for all $v\in Q$.
  \item\label{item:schottky-3} $f$ is not a $(2,2,2,2)$-map. Moreover,
    $f$ has hyperbolic orbifold if and only if $\# P\geq 3$.
  \item\label{item:schottky-4} If $T' = (Q',E')$ is an isomorphic
    (resp. isotopic) topological Tischler graph and $(f',Q')$ is an
    associated marked Schottky map, then $(f,Q)$ and $(f',Q')$ are
    combinatorially equivalent (resp. isotopic).
  \end{enumerate} 
\end{proposition}
\begin{proof}
  \ref{item:schottky-1}-\ref{item:schottky-2} It is immediate from the
  definition that $f$ is an orientation-reversing branched covering of
  $S^2$ fixing all points in $Q$. Moreover, the branch points of $f$
  are exactly the vertices in $P$ and $\deg(f,v)=\deg_T(v)-1$ for all
  $v\in P$. By construction, any point inside a face $A$ of $T$ has
  exactly one (simple) preimage under $f$ in each face except $A$, so
  it has exactly $\#F(T)-1$ preimages under $f$ (counting
  multiplicities). This implies that the degree of $f$ is
  $\#F(T)-1\geq 2$. Combining all of these observations, we get that
  $(f,Q)$ is a marked critically fixed anti-Thurston map with $C_f =
  P_f = P$.

  \ref{item:schottky-3} By the above, the orbifold $\cO_f$ associated
  with $f$ is $S^2\setminus P$ with the Euler characteristic
  $\chi(\cO_f) = 2-\# P$, which implies the statement.

  \ref{item:schottky-4} First, let us show that if $(g,Q)$ is another
  marked Schottky map associated with $T$, then $(f,Q)$ and $(g,Q)$
  are isotopic. Given an arbitrary face $A$ of $T$, suppose $f_A$ and
  $g_A$ denote the topological reflections in $\partial A$ defining
  the restrictions $f|_{\overline A}$ and $g|_{\overline A}$. Then the
  map $\psi: S^2\to S^2$ given by $\psi|_T=\id_T$ and
  $\psi|_A=f^{-1}_A\circ g_A$ for $A\in F(T)$ is an
  orientation-preserving homeomorphism. Moreover, since every face $A$
  is a Jordan domain and $\psi|_{\overline A}: \overline A\to
  \psi(\overline A)={\overline A}$ is a homeomorphism with
  $\psi|_{\partial A} = \id_{\partial A}$, the Alexander trick
  (Theorem~\ref{thm:alexander}) implies that $\psi\in
  \Homeo^+_0(S^2,T) \supset \Homeo^+_0(S^2,Q)$. Since $g=f\circ \psi$
  by construction, the claim follows.

  Suppose now that $T' = (Q', E')$ is a marked topological Tischler
  graph isomorphic (resp.\ isotopic) to $T=(Q,E)$, and let $\phi:S^2
  \to S^2$ be an orientation-preserving homeomorphism providing the
  isomorphism, that is, $\phi(T)=T'$ and $\phi(Q)=Q'$. (If $T'$ is
  isotopic to $T$, we assume further that $\phi\in
  \Homeo^+_0(S^2,Q)$.) Consider now the map $g:=\phi^{-1} \circ f'
  \circ \phi$. We claim that $(g,Q)$ is a marked Schottky map
  associated with $T$, and thus $(f',Q')$ is equivalent
  (resp. isotopic) to $(f,Q)$ by the discussion above. Indeed, $g|_T=
  (\phi^{-1} \circ f' \circ \phi)|_T = \id_T$, because
  $f'|_{T'}=\id_{T'}$ and $\phi$ sends $T$ homeomorphically onto
  $T'$. Now if $A$ is an arbitrary face of $T$, then $A':=\phi(A)$ is
  a face of $T'$.  Since $f'$ maps $A'$ homeomorphically onto
  $S^2\setminus \overline {A'}$, the map $g=\phi^{-1} \circ f' \circ
  \phi$ maps $A$ homeomorphically onto $S^2\setminus \overline A$.  It
  follows that $(g,Q)$ is a marked Schottky map associated with $T$,
  which finishes the proof.
\end{proof}

The following proposition provides a sufficient condition for a given
anti-Thurston map to be isotopic to a Schottky map.

\begin{proposition}\label{prop:fixed-graph-implies-schottky}
  Let $T=(Q,E)$ be a marked topological Tischler graph, $f\colon
  S^2\to S^2$ be an orientation-reversing branched covering map, and
  suppose the following conditions are satisfied:
  \begin{enumerate}
  \item\label{sch-crit-1} the degree $d$ of $f$ equals $\#F(T)-1$;
  \item\label{sch-crit-2} $f(v)=v$ for all $v\in Q$;
  \item\label{sch-crit-3} there is a plane graph $T'=(Q,E')$ such that
    $f|_{T'}: T' \to T$ is a homeomorphism;
  \item\label{sch-crit-4} for every $e'\in E'$, $f(e')$ is homotopic
    to $e'$;
  \item\label{sch-crit-5} $f|_{T'}$ preserves the cyclic order of
    edges at every vertex of $T'$: if $e'_1, \dots, e'_k$ are the
    edges of $T'$ incident with a vertex $v\in Q$ and listed in
    counterclockwise order, then $f(e'_1), \dots, f(e'_k)$ are the
    edges of $T$ incident with $v$ and listed in counterclockwise order.
  \end{enumerate}
  Then $(f,Q)$ is isotopic to a marked Schottky map $(f_T, Q)$.
\end{proposition}

According to our convention in Section~\ref{sec:plane-graphs}, the
homotopy between $e$ and $e'$ in condition \ref{sch-crit-4} is assumed
to be within the family of open arcs in $(S^2,Q)$.

\begin{proof} Conditions \ref{sch-crit-3} and \ref{sch-crit-4} imply
  that the plane graphs $T'$ and $T$ are isotopic, and condition
  \ref{sch-crit-5} ensures the existence of $\psi\in \Homeo^+_0(S^2,
  Q)$ such that $\psi(e')=f(e')$ for all $e'\in E'$. Furthermore, we
  may assume that $\psi|_{e'}= f|_{e'}$ for each $e'\in E'$; this
  follows, for instance, from
  \cite[Thm.~A.6(ii)]{buserGeometrySpectraCompact2010}. Set
  $g:=f\circ \psi^{-1}$. Then $g$ is an orientation-reversing branched
  self-covering of $S^2$ with $g|_T = \id_T$. 

  We claim that $B:=S^2\setminus \overline{A}\subseteq g(A)$ for every face $A$ of $T$. In order to prove this, fix a point $z_0\in \partial A \setminus C_g$. For an arbitrary point $z_1\in B$, choose a path $\alpha:[0,1]\to \overline B$ with $\alpha(0)=z_0$, $\alpha(1)=z_1$, and $\alpha\left((0,1]\right) \subset B$. Let $\alpha': [0,1]\to S^2$ be a lift of $\alpha$ by $g$ starting at $z_0$, that is,  $\alpha'$ satisfies $\alpha'(0)=z_0$ and $g\circ \alpha' = \alpha$. (Such a lift always exists since $g$ is a branched covering.) From the fact that $g$ is an orientation-reversing local homeomorphism near $z_0$ and $g|_{\partial A} = \id_{\partial A}$, we have that
  $\alpha'\left((0,\epsilon)\right) \subset A$ for some
  $\epsilon>0$. Furthermore, since $\partial A = \partial B \subset T$ is forward-invariant under $g$ and $\alpha((0,1]) \subset S^2 \setminus \partial B$, we have that $\alpha'((0,1]) \subset S^2 \setminus \partial A$ and 
  thus $\alpha'\left((0,1]\right)\subset A$ by connectivity. It follows that $z_1=\alpha(1) =g\left(\alpha'(1)\right)\in g(A)$, which establishes the claim.

  Since the degree of $g$ equals the degree of $f$, which is $\#F(T)
  -1$, we can conclude that $g$ maps every $A\in F(T)$ homeomorphically onto
  $S^2\setminus \overline{A}$. It follows that $(g,Q)$ is a marked
  Schottky map associated to $T$. Since $P_g=C_g\subseteq Q$ and $f=g
  \circ \psi$ with $\psi\in \Homeo^+_0(S^2, Q)$, we get that $(f, Q)$
  and $(g,Q)$ are isotopic and the statement follows.
\end{proof}

\begin{corollary}
  \label{cor:crit-fixed-schottky}
  Let $f$ be a critically fixed anti-rational map with the alternating
  (or reduced) Tischler graph $T = (Q,E)$, and $(f_T,Q)$ be an
  associated marked Schottky map. Then $(f,Q)$ is isotopic to
  $(f_T,Q)$.
\end{corollary}
\begin{proof}
  Since $f$ fixes all vertices of $T$ and maps every edge of $T$
  homeomorphically onto itself, the statement immediately follows from
  \Cref{prop:fixed-graph-implies-schottky} applied to the map $f$
  and the graph $T$ with $T'=T$.
\end{proof}

\subsection{Realizability of Schottky maps}
In the following, let $T = (Q,E)$ be a marked topological Tischler
graph with an associated marked Schottky map $(f,Q)$. Our task now is
to understand for which graphs $T$ the marked Thurston map $(f,Q)$ is
realizable, that is, when $(f,Q)$ is combinatorially equivalent to a
marked rational map.

Let us first analyze the situation where $\#C_f= 2$. (Recall from
\Cref{prop:equivalent-tischler-graphs}\ref{item:schottky-1} that
the critical set $C_f$ consists of all vertices of $T$ of degree $\ge
3$.) In this case, every edge of the corresponding reduced topological
Tischler graph $\mathring T$ has to connect the two vertices in
$C_f$. Then $\mathring T$ is isomorphic to the reduced Tischler graph
of $g(z) = \bar{z}^d$ with $d=\#F(\mathring T)-1 \ge 2$, and 
\Cref{prop:equivalent-tischler-graphs}\ref{item:schottky-4} and
Corollary \ref{cor:crit-fixed-schottky} imply that the Schottky map
$f$ is combinatorially equivalent to $g$. Since every edge of the
reduced Tischler graph of $g$ contains exactly one fixed point of $g$,
it easily follows that the marked Schottky map $(f,Q)$ is obstructed
if and only if an edge of the reduced graph $\mathring T$ contains
more than one vertex of $T$ (or equivalently, if $T$ has two adjacent
vertices of degree 2). Otherwise, in the case where every edge of
$\mathring T$ contains at most one vertex of $T$, the marked Schottky
map $(f,Q)$ is combinatorially equivalent to a marked rational map
$(g,Q')$, where $Q'$ consists of $0$, $\infty$, and some subset of the
$(d+1)$-st roots of unity (which are the repelling fixed points of
$g$).

In the general case, when $C_f\geq 2$, it is also easy to see that the
marked Schottky map $(f,Q)$ is obstructed whenever an edge of the
corresponding reduced topological Tischler graph $\mathring T$
contains at least two vertices of $T$. However, this is not the only
source for ``non-realizability''. In particular, a Schottky map
associated to the reduced topological Tischler graph from
Figure~\ref{fig:obstruction} is obstructed.  The following definition
combines the structure of these two examples and provides a necessary
and sufficient condition for (non-)realizability of the marked
Schottky map $(f,Q)$, see Theorem~\ref{thm:schottky-obstructions}
below.

\begin{definition}\label{def:Tischler-graph-obstructed}
  A marked topological Tischler graph $T$ is called \emph{obstructed}
  if there is a pair $A,B\in F(T)$ of distinct faces that share two
  distinct non-adjacent boundary edges (that is, $\partial A \cap
  \partial B$ contains at least two non-adjacent edges of $T$);
  otherwise, we say that $T$ is \emph{unobstructed}.
\end{definition}

\begin{remark}
  The above definition is easily seen to be equivalent to the
  following one: a marked topological Tischler graph $T$ is obstructed
  if and only if its dual graph $T^*$ has a cycle of length $2$ that
  does not bound a face of $T^*$. Moreover, we may simplify Definition
  \ref{def:Tischler-graph-obstructed} when $T$ is reduced. In this
  case, for all distinct faces $A,B\in F(T)$, any two edges of $T$ in
  $\partial A \cap \partial B$ are automatically non-adjacent. So the
  condition simply becomes that $T$ is obstructed if there are two
  distinct faces of $T$ sharing more than one boundary edge, or
  equivalently, if the dual graph $T^*$ has multiple edges.
\end{remark}

\begin{theorem}
  \label{thm:schottky-obstructions}
  Let $T=(Q,E)$ be a marked topological Tischler graph and $(f,Q)$ be
  an associated marked Schottky map. Then one of the following two mutually
  exclusive cases occurs.
  \begin{description}
  \item[$\bm{T}$ is obstructed] In this case, the marked anti-Thurston
    map $(f,Q)$ is not equivalent to a marked anti-rational
    map. Moreover, if $A,B\in F(T)$ are two distinct faces that share
    two distinct non-adjacent boundary edges $a,b\in E(T)$, then any
    simple closed curve $\gamma\subset S^2\setminus Q$ that intersects
    (transversely) the graph $T$ exactly twice, once in $a$ and once
    in $b$, is a fixed Levy curve for $(f,Q)$.
  \item[$\bm{T}$ is unobstructed] In this case, the marked
    anti-Thurston map $(f,Q)$ is equivalent to a marked critically
    fixed anti-rational map, unique up to Möbius conjugacy.
  \end{description}
\end{theorem}

\begin{remarks}\mbox{}
\begin{enumerate}
    \item   A direct consequence of this theorem is that the alternating and
  reduced Tischler graphs of every critically fixed anti-rational map
  are unobstructed, when viewed as marked topological Tischler
  graphs. 
   \item In the case where $T$ is reduced, a similar statement was shown in  \cite{parkLevyThurstonObstructions2022} as an
application of a more general framework.
\end{enumerate}
\end{remarks}

In order to prepare for the proof, we first have a closer look at the
pullback action of Schottky maps on simple closed curves in $S^2
\setminus Q$. Some of the following parallels the concepts and
arguments in the orientation-preserving case from
\cite{hlushchankaTischlerGraphsCritically2019}. As in
Section~\ref{sec:thurst-theor-orient}, we adopt the convention that
homotopy of simple closed curves means free homotopy in $S^2 \setminus
Q$.

\begin{figure}[t]
  \begin{tikzpicture}
    \begin{scope}
      \coordinate (A) at (-1,-1);
      \coordinate (B) at ( 1,-1);
      \coordinate (C) at ( 1, 1);
      \coordinate (D) at (-1, 1);
      
      \coordinate (b) at (-1.5,0);
      
      \draw [fill] (A) circle (1mm);
      \draw [fill] (B) circle (1mm);
      \draw [fill] (C) circle (1mm);
      \draw [fill] (D) circle (1mm);
      \draw (A) -- (B) -- (C) -- (D) -- (A);
      \draw (D) arc (90:270:1); 
      \draw (B) arc (-90:90:1);
      \draw [dashed] (b) circle (1.5);
      \draw (0.25,0.4) node {$\gamma$};
    \end{scope}

    \begin{scope}[xshift=7cm]
      \coordinate (A) at (-1,-1);
      \coordinate (B) at ( 1,-1);
      \coordinate (C) at ( 1, 1);
      \coordinate (D) at (-1, 1);
      
      \draw [fill] (A) circle (1mm);
      \draw [fill] (B) circle (1mm);
      \draw [fill] (C) circle (1mm);
      \draw [fill] (D) circle (1mm);
      \draw (A) -- (B) -- (C) -- (D) -- (A);
      \draw (D) arc (90:270:1); 
      \draw (B) arc (-90:90:1);
      \draw (0.25,0.4) node {$\gamma$};

      \coordinate (a) at (-3,0);
      \coordinate (b) at (-1.5,0);
      \coordinate (c) at (0,0);
      \coordinate (d) at (1.5,0);
      \coordinate (e) at (1.5,2); 
      \coordinate (f) at (-3,2);  
      \draw [dashed] (a) -- (b) -- (c) -- (d);
      \draw [dashed] (b) circle (1.5);
      \draw [dashed] (d) arc [
      start angle = -90,
      end angle = 90,
      x radius = 1,
      y radius = 1,
      ];
      \draw [dashed] (e) -- (f);
      \draw [dashed] (f) arc[
      start angle = 90,
      end angle = 270,
      x radius = 1,
      y radius = 1,
      ];
      \draw [fill=white] (a) circle (1mm);
      \draw [fill=white] (b) circle (1mm);
      \draw [fill=white] (c) circle (1mm);
      \draw [fill=white] (d) circle (1mm);
    \end{scope}
  \end{tikzpicture}
  \caption{On the left, a reduced topological Tischler graph $T$ whose
    associated Schottky map $f_T$ is obstructed. The dashed circle
    $\gamma$ is a fixed Levy curve, since $f_T^{-1}(\gamma)$ has one
    connected component $\gamma'$ that is homotopic to $\gamma$ and
    which is mapped homeomorphically onto $\gamma$. On the right, a
    dual graph of $T$ is indicated as well, with dashed edges
    and white vertices. The upper and lower halves of the fixed Levy
    curve are edges in the dual graph.}
  \label{fig:obstruction}
\end{figure}

\begin{definition}
  \label{def:complexity}
  Let $G = (Q,E)$ be a plane graph, and let $\gamma$ be a simple
  closed curve in $S^2 \setminus Q$. The \emph{complexity} of $\gamma$
  (with respect to $G$) is the intersection number $\gamma \cdot G$,
  defined as the minimal number of intersections of $\gamma'$ with $G$
  over all simple closed curves $\gamma'\subset S^2 \setminus Q$ that
  are homotopic to $\gamma$. We say that $\gamma$ is \emph{minimal}
  (with respect to $G$) if it realizes the minimal number of
  intersections with $G$, that is, if $\gamma\cdot G = \#(\gamma \cap
  G)$.
\end{definition}
\begin{lemma}
  \label{lem:complexity}
  Let $T = (Q,E)$ be a marked topological Tischler graph, $f=f_T$ be an
  associated Schottky map, and let $\gamma$ be a minimal simple closed
  curve in $S^2 \setminus Q$. Suppose $\gamma^1, \ldots, \gamma^n$ are the
  connected components of $f^{-1}(\gamma)$. Then
  \begin{equation*}
    \sum_{j=1}^n \gamma^j \cdot T \le \gamma \cdot T.
  \end{equation*}
  The inequality above is strict if and only if one of the curves $\gamma^j$ is not
  minimal. In particular, $\gamma^1 \cdot T \le \gamma \cdot T$, with
  equality if and only if $\gamma^1$ is minimal and
  $\gamma^j \cap T = \emptyset$ for $j=2, \ldots, n$.
\end{lemma}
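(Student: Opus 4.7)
The key observation I would exploit is that $f|_T = \id$, which gives the elementary set-theoretic identity
\[
f^{-1}(\gamma) \cap T \;=\; \gamma \cap T
\]
for any curve $\gamma \subset S^2 \setminus P$: any $x \in T$ is fixed by $f$, so $x \in f^{-1}(\gamma) \iff x \in \gamma$. The plan is to combine this identity with the very definition of complexity.

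First I would replace $\gamma$ by a transverse representative in its homotopy class that realizes the minimum, so that $|\gamma \cap T| = \gamma \cdot T$ is a finite cardinality; a standard general-position perturbation does the job. Since $\gamma$ avoids $P$ and $f$ restricts to an unbranched degree-$d$ covering of $S^2 \setminus P$ onto itself (its critical set being $P$ by Lemma~\ref{lem:equivalent-tischler-graphs}), the preimage $f^{-1}(\gamma)$ decomposes as a disjoint union of simple closed curves $\gamma^1, \ldots, \gamma^n$ in $S^2 \setminus P$, each transverse to $T$ at non-vertex points. Summing the displayed identity over components and using the defining bound $\gamma^j \cdot T \le |\gamma^j \cap T|$ yields
\[
\sum_{j=1}^n \gamma^j \cdot T \;\le\; \sum_{j=1}^n |\gamma^j \cap T| \;=\; |\gamma \cap T| \;=\; \gamma \cdot T,
\]
with strict inequality precisely when $\gamma^j \cdot T < |\gamma^j \cap T|$ for some $j$, i.e., when some $\gamma^j$ is not minimal.

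For the ``in particular'' statement, each $\gamma^j \cdot T$ is a non-negative integer, so $\gamma^1 \cdot T \le \gamma \cdot T$ is immediate. If equality holds, then $\sum_{j \ge 2} \gamma^j \cdot T = 0$ and $\gamma^1 \cdot T = \gamma \cdot T = \sum_j |\gamma^j \cap T|$; combined with $\gamma^1 \cdot T \le |\gamma^1 \cap T|$ this forces $|\gamma^j \cap T| = 0$ for $j \ge 2$ and $\gamma^1 \cdot T = |\gamma^1 \cap T|$, so $\gamma^1$ is minimal and the other components miss $T$. The converse is immediate from the identity $\sum_j |\gamma^j \cap T| = \gamma \cdot T$. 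The only mildly technical point to verify is that $f$ is a local homeomorphism at each non-vertex point of $T$, so that the pullback of a transverse curve behaves as expected; this is immediate, since along an edge $e \subset \partial U \cap \partial V$ the reflections $f_U$ and $f_V$ both restrict to the identity on $e$ and each maps its face across $e$, producing a local homeomorphism of a neighborhood of $e$ that swaps the two sides.
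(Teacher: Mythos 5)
Your proof is correct and follows essentially the same route as the paper's: both hinge on the identity $f^{-1}(\gamma)\cap T=\gamma\cap T$ coming from $f|_T=\id$, then sum the bound $\gamma^j\cdot T\le\#(\gamma^j\cap T)$ over components and read off the equality case. The extra technical remarks you include (transversality of a minimal representative and the local homeomorphism along edges) are harmless elaborations of steps the paper leaves implicit.
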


For an illustration of the decrease of complexity under repeated
pullbacks in an example, see Figure~\ref{fig:pullback-1}.

\begin{proof}
  Because $\gamma$ is minimal, $\# (\gamma \cap T) = \gamma \cdot T$,
  and since $f|_T$ is the identity, we know that
  $\gamma \cap T = f^{-1}(\gamma) \cap T$. It follows that
  \[
    \sum_{j=1}^n \gamma^j \cdot T \le \sum_{j=1}^n \# (\gamma^j \cap T)
    = \# (\gamma \cap T) = \gamma \cdot T.
  \]
  The first inequality is strict if and only if one of the curves $\gamma^j$ is not
  minimal. The last claim of the lemma is an immediate consequence.
\end{proof}

\begin{proof}[Proof of Theorem \ref{thm:schottky-obstructions}]

  Let $T=(Q,E)$ be a marked topological Tischler graph and $(f,Q)$ be
  an associated marked Schottky map. Since $f$ is not a
  $(2,2,2,2)$-map (see
  \Cref{prop:schottky-uniqueness}\ref{item:schottky-2}), the
  orientation-reversing Thurston theorem (see
  Theorem~\ref{thm:anti-thurston}) implies that $(f,Q)$ is equivalent
  to a marked anti-rational map if and only if $(f,Q)$ does not have a
  multicurve obstruction, and that the equivalent marked anti-rational
  map is unique up to Möbius conjugacy.
  
  We start with the easy part. Assume that $T$ is obstructed, that is,
  it has a pair of distinct faces $A$ and $B$ sharing two distinct
  non-adjacent boundary edges $a$ and $b$.  Let
  $\gamma\subset S^2\setminus Q$ be a simple closed curve that
  intersects (transversely) the graph $T$ exactly twice, once in $a$
  and once in $b$. First of all, we claim that $\gamma$ is
  essential. Indeed, let $U$ be a component of $S^2 \setminus
  \gamma$. Since $a$ and $b$ are non-adjacent edges, $U$ must contain
  one vertex incident with $a$, and another vertex incident with $b$. So
  $U$ contains at least $2$ marked points, and thus $\gamma$ is
  essential. If we denote the intersection points of $\gamma$ with $a$
  and $b$ by $z$ and $w$, then $f^{-1}(\gamma) \cap A$ is an open arc
  $\gamma'_A$ in $A$ connecting $z$ and $w$, and
  $f^{-1}(\gamma) \cap B$ is an open arc $\gamma'_B$ in $B$ connecting
  $z$ and $w$. It follows that the union
  $\gamma'_A\cup \gamma'_B \cup \{z,w\}$ is a simple closed curve
  $\gamma'$ in $S^2\setminus Q$, which again intersects the graph $T$
  exactly at $z$ and $w$, and hence $\gamma'$ is homotopic to $\gamma$
  in $S^2 \setminus Q$. Since, by definition, $f$ maps the arcs
  $\gamma'_A$ and $\gamma'_B$ homeomorphically onto $\gamma\cap B$ and
  $\gamma\cap A$, respectively, we have that
  $\deg(f: \gamma'\to \gamma) = 1$, and thus $\gamma$ is a fixed Levy
  curve for $(f,Q)$. Hence, when the graph $T$ is obstructed, the marked anti-Thurston map $(f, Q)$ is not equivalent to a marked anti-rational map.

  In order to prove the converse implication, suppose that 
  $(f,Q)$ has an obstruction $\Gamma=(\gamma_1,\dots,\gamma_m)$. Since the
  Thurston matrix $A = A_{f,Q,\Gamma}=(a_{jk})_{j,k=1}^m$ depends only on the homotopy classes of curves $\gamma_k\in\Gamma$
  in $S^2 \setminus Q$, we may assume that the curves in $\Gamma$ are
  minimal with respect to the graph $T$. By Lemma~\ref{lem:obstruction-types}, we may further assume
  that $\Gamma$ is an irreducible obstruction, so that the associated
  matrix $A$ is irreducible. Let $D_A$ be the \emph{digraph associated with $A$}, that is, $D_A$ is a directed graph with the vertex set $\Gamma$ and a directed edge from vertex $\gamma_j$ to vertex $\gamma_k$ precisely when $a_{jk}\neq 0$ (i.e., when $f^{-1}(\gamma_k)$ has a component homotopic to $\gamma_j$). Since $A$ is irreducible, this
  digraph is \emph{strongly connected}, meaning that there is a path in each direction between every pair of vertices of $D_A$ (see, for instance, \cite{brualdiCombinatorialMatrixTheory1991}).

  From Lemma~\ref{lem:complexity}, we get that
  $\gamma_j \cdot T \le \gamma_k \cdot T$ whenever $f^{-1}(\gamma_k)$
  has a component homotopic to $\gamma_j$, so that the complexity of curves in $\Gamma$ (with respect to $T$) does not decrease along the edges in
  the digraph $D_A$. Since $D_A$ is strongly connected, it follows that $\gamma_k\cdot T$ is constant, independent of $k$. Again from Lemma~\ref{lem:complexity},
  we conclude that there is only one essential connected
  component $\gamma'_k$ of $f^{-1}(\gamma_k)$, which must be minimal with respect to $T$ and homotopic to some
  $\gamma_j\in \Gamma$, while all other components of $f^{-1}(\gamma_k)$ are disjoint from $T$. This means
  that the digraph $D_A$ has only one incoming edge to
  every $\gamma_k \in \Gamma$. A strongly connected digraph with this
  property has to be a cycle. Up to permutation of indices, we may therefore assume that $\gamma'_k$ is homotopic to 
  $\gamma_{k-1}$ for each $k=1,\ldots, m$ (with the convention that indices are understood modulo $m$). The associated Thurston linear map $L=L_{f,Q,\Gamma}$ is then given by
  $L(\gamma_k) = \frac{1}{d_{k-1,k}} \gamma_{k-1}$, where $d_{k-1,k}=\deg(f: \gamma'_k\to \gamma_k)$. Since the leading eigenvalue of $L$ is $\lambda = \prod_{k=1}^m \frac{1}{d_{k-1,k}}$ and $\Gamma$ is an
  obstruction, we conclude that $\lambda = 1$ and $d_{k-1,k} = 1$ for all
  $k=1,\ldots, m$. In other words, $\Gamma$ is a Levy cycle.

  By the discussion above, $f$ maps $\gamma_1'$ one-to-one onto $\gamma_1$, and
  $\#(\gamma_1 \cap T) = \gamma_1 \cdot T = \gamma_0\cdot T = \gamma_1' \cdot T =
  \#(\gamma_1' \cap T)$. Since $f$ is the identity on $T$,
  this implies that $\gamma_1 \cap T = \gamma_1' \cap T$. Let
  $A_1, \ldots, A_r$ be the distinct faces of $T$ that intersect
  $\gamma_1$. Then for each $k=1,\dots,r$ the set $f^{-1}(\gamma_1) \cap A_k$ consists of (disjoint) open 
  arcs connecting the points
  in $\gamma_1 \cap \partial A_k$ within $A_k$, so that
  $f^{-1}(\gamma_1) \cap A_k = \gamma_1' \cap A_k$. (All the other
  connected components of $f^{-1}(\gamma_1)$ are contained in faces
  disjoint from $\gamma_1$.) By the definition of Schottky maps, every point in $\gamma_1 \cap A_1$ has
  exactly one preimage in $\gamma_1' \cap A_k$ for each 
  $k=2,\ldots,r$. Since $\deg(f: \gamma_1'\to \gamma_1) = 1$, we must
  have $r=2$, so any edge crossed by $\gamma_1$ is a common boundary edge
  between $A_1$ and $A_2$. Finally, because $\gamma_1$
  is simple and essential, $\partial A_1\cap \partial A_2$ must contain at least two 
  non-adjacent edges, which means that $T$ is obstructed. This completes the proof of the theorem.
\end{proof}

\subsection{Uniqueness of Schottky models}
\label{sec:uniq-schottky}

In order to have a one-to-one correspondence between the isomorphism
classes of unobstructed reduced topological Tischler graphs and the
Möbius conjugacy classes of critically fixed anti-rational maps (see
Theorem \ref{thm:tischler-fixed-correspondence}), we have to check
that equivalence of Schottky maps implies isomorphism of the
corresponding graphs (see \Cref{prop:schottky-uniqueness}).  More
precisely, we will show the following: If $f$ may be viewed as an
associated Schottky map for two marked topological Tischler graphs $T$
and $T'$, then these graphs are isotopic.

First, we introduce some auxiliary terminology.  Recall that an open
arc $\gamma$ in $(S^2,Q)$ is an open arc
$\gamma \subset S^2 \setminus Q$ with endpoints $p,q \in Q$.  We say
that such an open arc is \emph{essential} (in $(S^2,Q)$) if $p \ne q$,
i.e., if the endpoints are distinct.  Just as for open arcs, homotopy
and isotopy of essential open arcs in $(S^2,Q)$ is always considered
within the family of essential open arcs; in particular, the endpoints
must be fixed.

Let $(f, Q)$ be a marked \antiThurston map of degree $d$, and let
$\gamma$ be an open arc in $S^2\setminus Q$. Then the preimage
$f^{-1}(\gamma)$ consists of exactly $d$ disjoint open arcs in
$S^2 \setminus Q$, called the \emph{preimage arcs} of $\gamma$ (under
$f$), each of which is mapped homeomorphically onto $\gamma$. Note
that some of these preimage arcs may have common endpoints. Moreover,
even if $\gamma$ is essential in $(S^2,Q)$, its preimage arcs are not
necessarily essential in $(S^2,Q)$. By homotopy lifting, the marked
\antiThurston map $(f,Q)$ induces a \emph{pullback relation
  $\leftarrow$} on the set of homotopy classes of essential open arcs
in $(S^2,Q)$. Namely, we define $[\gamma]\leftarrow[\gamma']$,
whenever $\gamma'$ is an essential preimage arc of an essential open
arc $\gamma$.

We say that an essential open arc $\gamma$ in $(S^2,Q)$ is
\emph{invariant} (with respect to $(f,Q)$) if one of its preimage arcs
is homotopic to $\gamma$, that is,
$[\gamma]\leftarrow[\gamma]$. Furthermore, $\gamma$ is
\emph{$n$-periodic} with \emph{period} $n \ge 1$ if it is invariant
with respect to
$(f^n,Q)$.

The following definition mimics the analogous one for simple closed
curves (cf.\ Definition~\ref{def:complexity}).

\begin{definition}
  \label{def:complexity-arcs}
  Let $G = (Q, E)$ be a plane graph, and let $\gamma$ be an essential
  open arc in $(S^2,Q)$. The \emph{complexity} of $\gamma$ (with
  respect to $G$) is the intersection number $\gamma \cdot G$, defined
  as the minimal number of intersections of $\gamma'$ with $G$ over
  all essential open arcs $\gamma'$ in $(S^2,Q)$ that are homotopic to
  $\gamma$.  We say that $\gamma$ is \emph{minimal} (with respect to
  $G$) if $\gamma\cdot G = \#(\gamma\cap G)$.
\end{definition}
\begin{remark}
  Since we consider arcs to be open, we do not count the endpoints of
  $\gamma$ in $Q$ as intersection points with $G$. In particular, if
  $\gamma$ is an edge of $G$, then $\gamma\cdot G = 0$, since we can
  homotope $\gamma$ to a ``parallel'', slightly displaced disjoint
  arc.
\end{remark}

\begin{lemma}
  \label{lem:arc-pullbacks}
  Let $T=(Q,E)$ be a marked topological Tischler graph, let $f=f_T$ be
  an associated Schottky map, and let $\gamma$ be an essential open
  arc in $(S^2,Q)$ that is minimal with respect to $T$.  Suppose
  $\gamma'$ is a preimage arc of $\gamma$ that is essential in
  $(S^2,Q)$. Then $\gamma'\cdot T \leq \gamma \cdot T$.  Moreover, if
  equality occurs, then $\gamma'$ is also minimal with respect to $T$,
  and $\gamma \cup \gamma'$ is contained in the closure of two faces
  of the graph $T$.
\end{lemma}
\begin{proof}
  Let $T$, $f$, and $\gamma$ be as in the statement, and suppose that
  $\gamma'$ is an essential preimage arc of $\gamma$. Since $\gamma$
  is minimal and $f$ is the identity on $T$, we have that
  \begin{equation}\label{eq:arc-complexity}
    \gamma'\cdot T \leq \#\left(\gamma' \cap T\right) \leq
    \#\left(f^{-1}(\gamma) \cap T \right)=  \#\left(\gamma \cap
      T\right)  = \gamma \cdot T,
  \end{equation}
  which establishes the first part of the lemma. 

  Suppose now that $\gamma'\cdot T = \gamma \cdot T$. Then it follows
  from \eqref{eq:arc-complexity} that $\gamma'$ is minimal with
  respect to $T$, and that no other component of $f^{-1}(\gamma)$
  intersects $T$. Moreover, we have $\gamma' \cap T = \gamma \cap T$,
  because $f|_T = \id_T$. We also note that, since $\gamma'$ is
  essential in $(S^2,Q)$ and $f$ fixes all points in $Q$, the
  endpoints of $\gamma'$ and $\gamma$ must coincide.
  
  Assume that $\gamma$ meets more than two faces of $T$.  Then there
  is a sub-arc of $\gamma$ that meets three different faces
  $A_0, A_1, A_2\in F(T)$ consecutively and that intersects $T$ at
  $z_1 \in \partial A_0 \cap \partial A_1$ and
  $z_2 \in \partial A_1 \cap \partial A_2 $. Let $\gamma_1$ be the
  sub-arc of $\gamma$ in $A_1$ with endpoints $z_1$ and $z_2$. Then
  $\gamma_1$ has one preimage arc in $A_0$ with endpoint $z_1$ and one
  preimage arc in $A_2$ with endpoint $z_2$. Since $\gamma'$ contains
  both $z_1$ and $z_2$ (because $\gamma'\cap T = \gamma\cap T$), it
  also contains both of these preimage arcs of $\gamma_1$ as
  sub-arcs. This contradicts the fact that $f$ maps $\gamma'$
  homeomorphically onto $\gamma$.

  It follows that $\gamma$ meets at most two faces of $T$. If $\gamma$
  meets only one face $A_0$, then
  $\gamma' \cdot T = \gamma \cdot T = 0$, so $\gamma'$ also meets only
  one face of $T$, and since $f(A_0) \cap A_0 = \emptyset$, it must be
  a different face $A_1$. Hence
  $\gamma\cup\gamma' \subset \overline{A_0\cup A_1}$.

  If $\gamma$ meets two faces $A_0$ and $A_1$, then all points in
  $\gamma' \cap T = \gamma \cap T$ are in
  $\partial A_0 \cap \partial A_1$, and thus $\gamma'$ also meets
  exactly $A_0$ and $A_1$. (See the left-hand side of \Cref{fig:figure8} for an
  illustration of this case.) This finishes the proof the lemma.
\end{proof}

The following corollary describes the structure of periodic essential
open arcs with respect to a marked Schottky map.

\begin{corollary}
  \label{cor:periodic-arcs}
  Let $T=(Q,E)$ be a marked topological Tischler graph, let $f=f_T$ be
  an associated Schottky map, and let $\gamma$ be an essential open
  arc in $(S^2,Q)$ that is minimal with respect to $T$.  Suppose
  $ \gamma_0=\gamma, \gamma_1,\dots, \gamma_{n}$ is a sequence of open
  arcs in $(S^2,Q)$ such that
  \begin{itemize}
      \item $\gamma_{j}$ is a preimage arc of $\gamma_{j-1}$ for each $j=1,\dots, n$;
      \item $\gamma_n$ is homotopic to $\gamma$;
      \item $\gamma_0, \dots, \gamma_{n-1}$ are pairwise non-homotopic. 
  \end{itemize}
  Then one of the following three mutually exclusive cases occurs: 
  \begin{enumerate}
  \item\label{item:periodic-i} $n=1$ and $\gamma$ is homotopic to an edge of $T$;
  \item\label{item:periodic-ii} $n=2$ and $\gamma$ meets exactly two faces of $T$;
  \item\label{item:periodic-iii} $n\geq 2$ and every $\gamma_j$, $j=0,\dots, {n}$, meets exactly one face $A_j$ of $T$. Moreover, the faces $A_0,\dots, A_{n-1}$ are all distinct and they all share the same two boundary vertices, namely, the endpoints of $\gamma$. 
  \end{enumerate}
\end{corollary}

See see the left-hand side of \Cref{fig:figure8} for an illustration
of case \ref{item:periodic-ii}, and its right-hand side for an
illustration of case \ref{item:periodic-iii}.

\begin{proof}
  The proof is straightforward and we leave some of the details to the
  reader. Let $\gamma_0=\gamma, \gamma_1,\dots, \gamma_{n}$ be a
  sequence of open arcs in $(S^2,Q)$ as in the statement. Since
  $\gamma_n$ is homotopic to $\gamma$, all these arcs must be
  essential in $(S^2,Q)$. By Lemma~\ref{lem:arc-pullbacks}, we have
  \begin{equation}\label{eq:periodic-arc}
    \gamma \cdot T= \gamma_0\cdot T \geq \gamma_1 \cdot T \geq \dots
    \gamma_n\cdot T = \gamma \cdot T.
  \end{equation}
  It follows that all inequalities in \eqref{eq:periodic-arc} are in
  fact equalities, and $\gamma_j\cup \gamma_{j+1}$ is contained in the
  closure of two faces of $T$ for all $j=0,\dots, n-1$. Now, if
  $\gamma$ meets exactly two faces of $T$, then $n=2$ and $\gamma$ is
  $2$-periodic. Otherwise, every open arc $\gamma_j$, $j=0,\dots, n$,
  meets exactly one face $A_j$ of $T$. Furthermore, the endpoints of
  $\gamma_0, \gamma_1,\dots, \gamma_{n}$ must coincide, and thus the
  faces $A_0, A_1,\dots, A_{n}$ share the same two boundary vertices,
  given by the endpoints of $\gamma$. Since
  $\gamma_0, \dots, \gamma_{n-1}$ are pairwise non-homotopic and the
  faces of $T$ are Jordan domains, we have that the faces
  $A_0, A_1,\dots, A_{n-1}$ are all distinct. Finally, when $n=1$, one
  of the two complementary components of the Jordan curve
  $\gamma_0\cup \gamma_1$ cannot contain the vertices of $T$, and
  therefore $\gamma$ is homotopic to an edge of $T$.
\end{proof}

\begin{figure}[t]
  \centering
  \begin{tikzpicture}
  \begin{scope}[xscale=.9]
    \coordinate (P) at (0,0);
    \coordinate (P1) at (1,0);
    \coordinate (P2) at (2,0);
    \coordinate (Z1) at (3,0);
    \coordinate (Z1+) at (3,2);
    \coordinate (Z1-) at (3,-2);
    \coordinate (A0) at (3,1.2);
    \coordinate (A1) at (3,-1.2);
    \coordinate (Q2) at (4,0);
    \coordinate (Q1) at (5,0);
    \coordinate (Z2) at (6,0);
    \coordinate (Q) at (7,0);
    \draw (Z1) node [label=90:$z_1$] {};
    \draw (Z2) node [label={[label distance=-2mm]below right:$z_2$}] {};
    \draw (A0) node {$A_0$};
    \draw (A1) node {$A_1$};
    \draw [fill] (P) circle (1mm);
    \draw [fill] (P1) circle (1mm);
    \draw [fill] (P2) circle (1mm);
    \draw [fill] (Q2) circle (1mm) node[label={[label
      distance=-2mm]135:$q$}] {};
    \draw [fill] (Q1) circle (1mm);
    \draw [fill] (Q) circle (1mm);
    \draw (P) node[label=180:$p$] {}  -- (P1);
    \draw (Q) -- (Q1);
    \draw (P2) -- (Q2);
    \draw (P1) to [out=60,in=120] (P2);
    \draw (P1) to [out=-60,in=-120] (P2);
    \draw (Q2) to [out=60,in=120] (Q1);
    \draw (Q2) to [out=-60,in=-120] (Q1);
    \draw (P) to [out=120,in=180] (Z1+) to [out=0,in=60] (Q);
    \draw (P) to [out=-120,in=180] (Z1-) to [out=0,in=-60] (Q);
    \draw [red] (P) to [out=60,in=120] node[midway,above] {$\gamma$}
    (Z1);
    \draw [blue] (P) to [out=-60,in=-120] (Z1);
    \draw [red] (Z1) to [out=-60,in=-90] (Z2);
    \draw [red] (Z2) to [out=90,in=90] (Q2);
    \draw [blue] (Z1) to [out=60,in=90] node[midway,above]
    {$\gamma'$} (Z2);
    \draw [blue] (Z2) to [out=-90,in=-90] (Q2); 
    \draw [fill] (P) circle (1mm);
    \draw [fill] (Q1) circle (1mm);
    \draw [fill] (Q2) circle (1mm);
  \end{scope}
  \begin{scope}[xshift=11cm]
     \coordinate (L) at (-2,0);
     \coordinate (R) at (2,0);
     \coordinate (T3) at (0,2.8);
     \coordinate (T2) at (0,2);
     \coordinate (T1) at (0,1.2);
     \coordinate (B1) at (0,-1.2);
     \coordinate (B2) at (0,-2);
     \coordinate (B3) at (0,-2.8);
     \coordinate (B4) at (0,-3.6);
     \coordinate (A0) at (2.5,2.2);
     \coordinate (A1) at (0.7,0.7);
     \coordinate (A2) at (0.7,-0.7);
     \coordinate (C1) at (.6,0);
     \coordinate (C2) at (1.2,0);
     \coordinate (D1) at (-.5,-3.6);
     \coordinate (D2) at (.5,-3.6);
     \draw[fill] (L) circle (1mm);
     \draw[fill] (R) circle (1mm);
     \draw[fill] (T2) circle (1mm);
     \draw[fill] (T1) circle (1mm);
     \draw[fill] (B1) circle (1mm);
     \draw[fill] (B2) circle (1mm);
     \draw[fill] (C1) circle (1mm);
     \draw[fill] (C2) circle (1mm);
     \draw (L) node[label=180:$p$] {} -- (C1);
     \draw (C2) --(R) node[label=0:$q$] {};
     \draw (C1) to [out=60,in=120] (C2);
     \draw (C1) to [out=-60,in=-120] (C2);
     \draw (L) to [out=45, in=180] (T1);
     \draw (T1) to [out=0, in=135] (R);
     \draw (L) to [out=90, in=180] (T2);
     \draw (T2) to [out=0, in=90] (R);
     \draw (L) to [out=-45, in=180] (B1);
     \draw (B1) to [out=0, in=-135] (R);
     \draw (L) to [out=-90, in=180] (B2);
     \draw (B2) to [out=0, in=-90] (R);
     \draw (T2) -- (T1);
     \draw (B2) -- (B1);
     \draw (A0) node {$A_0=A_4$};
     \draw (A1) node {$A_1$};
     \draw (A2) node {$A_2$};
     \draw[red] (L) to [out=25, in=155] node[midway, below] {$\gamma_1$} (R);
     \draw[red] (L) to [out=-25, in=-155] node[midway, above] {$\gamma_2$} (R);
     \draw[red] (L) to [out=120, in=180] (T3) node[below] {$\gamma = \gamma_0$} to [out=0, in=60] (R);
     \draw[red] (L) to [out=-120, in=-180] (B3) node[below] {$\gamma_3$} to [out=0, in=-60] (R);
     \draw [fill] (L) circle (1mm);
     \draw [fill] (R) circle (1mm);
  \end{scope}
  \end{tikzpicture}
  \caption{On the left, the open arc $\gamma'$ (in blue) is an
      essential preimage arc of the open arc $\gamma$ (in
      red) under the marked Schottky map associated with the marked
      topological Tischler graph shown in black. Note that $\gamma$ is
      $2$-periodic, namely
      $[\gamma]\leftarrow [\gamma']\leftarrow[\gamma]$. On the right,
      each open arc $\gamma_j$ (in red), $j=1,\dots, n$, is an
      essential preimage arc of the open arc $\gamma_{j-1}$
      (in red) under the marked Schottky map associated with the
      marked topological Tischler graph shown in black. Note that the
      arc $\gamma=\gamma_0$ is both $3$-periodic and $2$-periodic,
      because
      $[\gamma]=[\gamma_0]\leftarrow
      [\gamma_1]\leftarrow[\gamma_2]\leftarrow[\gamma_3]=[\gamma]$ and
      $[\gamma]\leftarrow [\gamma_1]\leftarrow [\gamma]$. Hence,
      $\gamma$ is $n$-periodic for each $n\geq 2$. However, $\gamma$
      is not $1$-periodic, since it is not homotopic to an edge of the
      respective marked topological Tischler graph.}
  \label{fig:figure8}
\end{figure}

The following proposition provides the converse to
\Cref{prop:equivalent-tischler-graphs}\ref{item:schottky-4}.
\begin{proposition}
  \label{prop:schottky-uniqueness}
  Let $T=(Q,E)$ and $T'=(Q',E')$ be two marked topological Tischler
  graphs, and let $f = f_T$ and $f'=f_{T'}$ be associated Schottky
  maps. If the marked Schottky maps $(f, Q)$ and $(f', Q')$ are
  equivalent (resp.\ isotopic), then the marked topological Tischler
  graphs $T$ and $T'$ are isomorphic (resp.\ isotopic).
\end{proposition}
\begin{proof}
  Let $T$, $T'$, $f$, and $f'$ be as in the statement. First we prove
  that the plane graphs $T$ and $T'$ are isotopic, if the maps $(f,Q)$
  and $(f',Q')$ are isotopic. For this we need the following claim
  whose proof is straightforward from the definition of Schottky maps
  and Lemma~\ref{lem:arc-pullbacks}; we thus leave it to the reader.
  Recall that the multiplicity $m_T(e)$ of an edge $e$ in a plane
  graph $T$ is defined as the total number of edges of $T$ that are
  homotopic to $e$.

  \begin{claim*}
    For every edge $e \in E$, the multiplicity $m_T(e)$ equals the
    number of preimage arcs of $e$ under $f$ that are homotopic to
    $e$.
  \end{claim*}

  The above claim, together with homotopy lifting, implies that for
  every edge $e\in E$ there is an edge $e'\in E'$ homotopic to $e$
  such that $m_T(e)=m_{T'}(e')$ (and vice versa). Hence $\# E = \#E'$,
  and the plane graphs $T$ and $T'$ are isotopic by
  \Cref{prop:graph-isotopy-criterion}.

  Suppose now that $\phi,\psi: S^2\to S^2$ provide a combinatorial
  equivalence between $f$ and $f'$, so that
  $\phi \circ f = f' \circ \psi$. Set
  $g:=\phi^{-1}\circ f' \circ \phi$. By the proof of
  \Cref{prop:equivalent-tischler-graphs}\ref{item:schottky-4}, the
  map $(g, Q)$ is a marked Schottky map associated with the graph
  $\phi^{-1}(T')$ with the vertex set $Q=\phi^{-1}(Q')$. At the same
  time, the maps $(g,Q)$ and $(f,Q)$ are isotopic, because
  $f=g\circ (\phi^{-1}\circ \psi)$ and
  $\phi^{-1}\circ \psi \in \Homeo^+_0(S^2,Q)$. By the above
  discussion, we have that the graphs $\phi^{-1}(T')$ and $T$ are
  isotopic, and thus $T'$ and $T$ are isomorphic. This finishes the
  proof of the proposition.
\end{proof}

We record the following immediate consequence.
\begin{corollary}
 \label{cor:schottky-classification}
  Two  marked topological Tischler
  graphs $T=(Q,E)$ and $T'=(Q',E')$ 
  are isomorphic (resp.\ isotopic) if and only if associated marked Schottky
  maps $(f_T, Q)$ and $(f_{T'}, Q')$ are
  equivalent (resp.\ isotopic).
\end{corollary}

\subsection{Classification of critically fixed
    anti-rational maps.}

Aggregating our results to this point, we can now precisely state the one-to-one
correspondence between critically fixed anti-rational maps and
unobstructed reduced topological Tischler graphs.

For $d \ge 2$, let $\cF_d$ be the set of Möbius conjugacy classes of
critically fixed anti-rational maps of degree $d$, and let $\cG_d$ be
the set of isomorphism classes of unobstructed reduced topological
Tischler graphs with $d+1$ faces. We define two maps $\Phi: \cF_d \to \cG_d$ and
  $\Psi: \cG_d \to \cF_d$ as follows.
  \begin{description}
  \item[Definition of $\Phi$] Given $[g] \in \cF_d$, we set
    $\Phi([g]) = [T_g]$, where $T_g$ is the reduced Tischler graph of the critically fixed anti-rational map 
    $g$. (By the remark after Theorem~\ref{thm:schottky-obstructions},
    $T_g$ is unobstructed.) In order to verify that this map is
    well-defined, suppose $g$ and $g'$ are conjugate by a Möbius
    transformation $\theta$, i.e.,
    $g' = \theta \circ g \circ \theta^{-1}$. Then the reduced Tischler graph $T_{g'}$ of $g'$ is given by $\theta(T_g)$, so $T_{g'}$ and $T_g$ are isomorphic.
  \item[Definition of $\Psi$] Given $[T] \in \cG_d$, let $f_T$ be an
    associated Schottky map. By
    Theorem~\ref{thm:schottky-obstructions}, $f_T$ is equivalent to a
    critically fixed anti-rational map $g_T$, 
    and we use this to
    define $\Psi([T]) = [g_T]$. (Note that  $\deg(g_T)=\deg(f_T)=\#F(T)-1=d$, as follows from \Cref{prop:equivalent-tischler-graphs}\ref{item:schottky-2}.) We claim that $\Psi$ is well-defined, that is, the equivalence class
    $[g_T]$ is independent of the choice of both the representative
    $T$ and the anti-rational map $g_T$. Indeed, let $T'$ be a plane graph
    isomorphic to $T$, let $f_{T'}$ be an associated Schottky map,
    and let $g_{T'}$ be an anti-rational map equivalent to $f_{T'}$. By
    \Cref{prop:equivalent-tischler-graphs}\ref{item:schottky-4},
    the maps $f_T$ and $f_{T'}$ are combinatorially equivalent. Then
    by the uniqueness part of Theorem~\ref{thm:schottky-obstructions},
    the anti-rational maps $g_T$ and $g_{T'}$ are Möbius conjugate as desired.
  \end{description}

  As discussed, both maps $\Phi$ and $\Psi$ are canonically defined, i.e.,
  independent of the choices made in the constructions. Furthermore, \Cref{prop:schottky-uniqueness}
  implies that $\Psi$ is injective, and Corollary~\ref{cor:crit-fixed-schottky} shows that $\Psi \circ
  \Phi = \id_{\cF_d}$. It follows that $\Phi$ and $\Psi$ are bijections and are inverses of each other, so we have proved the following result.

\begin{theorem}
  \label{thm:tischler-fixed-correspondence}
  There is a canonical one-to-one correspondence between isomorphism
  classes of unobstructed reduced topological Tischler graphs and
  Möbius conjugacy classes of critically fixed anti-rational
  maps. More precisely, for every $d \ge 2$, the map
  $\Psi: \cG_d \to \cF_d$ is a bijection with
  inverse $\Psi^{-1}=\Phi$.
\end{theorem}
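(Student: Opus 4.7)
The plan is to verify that $\Psi$ and $\Theta$ are well-defined mutually inverse maps; no essentially new ideas are required beyond the preceding sections, so the argument is largely a matter of bookkeeping. To see that $\Psi$ is well-defined I would note that equivalent topological Tischler graphs $T, T'$ yield Thurston-equivalent Schottky maps by Lemma \ref{lem:equivalent-tischler-graphs}, and any two Schottky maps associated to the same $T$ are Thurston equivalent by the Alexander trick applied face by face; Theorem \ref{thm:schottky-obstructions} and the normalized uniqueness in Theorem \ref{thm:anti-thurston} then pin down a single M\"obius class in $\cF_d$. For $\Theta$, if $g = \mu \circ f \circ \mu^{-1}$ with $\mu$ M\"obius, then $T_g = \mu(T_f)$ as subsets of $\CC$, so $\Theta$ descends to conjugacy classes; that its image lies in $\cG_d$ requires suppressing the degree-$2$ repelling vertices from the bipartite graph of Theorem \ref{thm:characterization-1} to obtain a topological Tischler graph with $d+1$ Jordan faces and all vertices of degree $\ge 3$, and unobstructedness follows because the Schottky map $f_{T_f}$ is Thurston equivalent to the anti-rational map $f$ (Theorem \ref{thm:crit-fixed-schottky}) and so, by Theorem \ref{thm:schottky-obstructions}, cannot come from a graph with a pair of faces sharing two edges.

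Next I would verify $\Psi \circ \Theta = \id_{\cF_d}$. Given $[f] \in \cF_d$, set $T = T_f$. By Theorem \ref{thm:crit-fixed-schottky} the anti-rational map $f$ is Thurston equivalent to $f_T$, while $\Psi([T])$ is by definition the unique M\"obius class of anti-rational maps Thurston equivalent to $f_T$; the uniqueness statement in Theorem \ref{thm:anti-thurston} forces $\Psi(\Theta([f])) = [f]$.

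Finally, the direction $\Theta \circ \Psi = \id_{\cG_d}$ is where Corollary \ref{cor:schottky-uniqueness} is essential. Pick $[T] \in \cG_d$ and any anti-rational representative $f$ of $\Psi([T])$; by construction $f$ is Thurston equivalent to $f_T$, and by Theorem \ref{thm:crit-fixed-schottky} also Thurston equivalent to $f_{T_f}$, so $f_T$ and $f_{T_f}$ are Thurston equivalent. Both $T$ and $T_f$ are unobstructed topological Tischler graphs, so Corollary \ref{cor:schottky-uniqueness} yields $T \sim T_f$, i.e., $\Theta(\Psi([T])) = [T]$. The one place where an honest proof had to be done, rather than just assembled, was Corollary \ref{cor:schottky-uniqueness}, which in turn rests on the invariant-arc analysis of Lemma \ref{lem:invariant-arcs}; that was the main obstacle in the entire framework, and with it already in hand the present theorem follows cleanly.
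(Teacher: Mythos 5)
Your proposal is correct and uses exactly the same ingredients as the paper's proof: Theorem~\ref{thm:crit-fixed-schottky} gives $\Psi \circ \Theta = \id$, and Corollary~\ref{cor:schottky-uniqueness} supplies the other direction (the paper phrases this as injectivity of $\Psi$ rather than $\Theta \circ \Psi = \id$, which is logically equivalent). Your explicit checks of well-definedness, including why $T_f$ is unobstructed and why the degree-2 repelling vertices must be suppressed, correspond to the paper's discussion in the paragraph preceding the theorem statement.
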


Combining the theorem above with
Theorem~\ref{thm:schottky-obstructions},
Lemma~\ref{lem:dual-to-Tischler}, and the remark after
Definition~\ref{def:Tischler-graph-obstructed}, we get the following
classification result, establishing Theorem~\ref{thm:a} from the
Introduction. (The proof is straightforward and is left to the
reader.)

\begin{theorem}
\label{thm:crit-fix-rational-classification}
There is an explicit and canonical one-to-one correspondence between
the following sets:
\begin{itemize}
    \item the set of Möbius conjugacy classes of critically
    fixed anti-rational maps;
    \item the set of equivalence classes of realizable
    Schottky maps;
    \item the set of isomorphism classes of unobstructed reduced
    topological Tischler graphs;
    \item the set of isomorphism classes of unobstructed alternating
    topological Tischler graphs;
    \item the set of isomorphism classes of $2$-connected simple plane graphs.
\end{itemize}
\end{theorem}

\subsection{Symmetries of critically fixed  anti-rational maps}
A very useful consequence of the above combinatorial classification is
that critically fixed anti-rational maps inherit any symmetries of
their (topological) reduced Tischler graphs. Before providing the
precise statement, we need to introduce some notation.

Let $g$ be an (anti-)rational map. We denote by $\Mob^\pm(g)$ the
group of all Möbius and anti-Möbius transformations $\theta$ that
commute with $g$, i.e., for which $g\circ \theta = \theta \circ g$. We
call $\Mob^\pm(g)$ the \emph{symmetry group} of $g$. For a group
$\Gamma$ of Möbius and anti-Möbius transformations, we say that
$g$ is \emph{$\Gamma$-symmetric} if its symmetry group contains
$\Gamma$.

Similarly, for a marked \antiThurston map $(f,Q)$, we denote by
$\Homeo^\pm(f,Q)$ the group of all homeomorphisms $\phi: S^2\to S^2$
with $\phi(Q)=Q$ that commute with $f$ up to isotopy, that is, for
which the \antiThurston maps $(f\circ \phi, Q)$ and
$(\phi\circ f, Q)$ are isotopic. We set
$\Mod^\pm(f,Q):=\Homeo^\pm(f,Q)/\sim$, where $\sim$ is the isotopy
equivalence relation rel.\ $Q$.

Furthermore, for a plane graph $T=(Q,E)$ in $S^2$, we denote by
$\Homeo^\pm(T)$ the group of all homeomorphisms $\phi: S^2\to S^2$
such that $\phi(T)=T$ and $\phi(Q)=Q$. We set
$\Mod^\pm(T):=\Homeo^\pm(T)/\sim$, where $\sim$ is the isotopy
equivalence relation rel.\ $Q$. Finally, when $T$ is a plane graph in
$\CC$, we denote by $\Mob^\pm(T)$ the group of all Möbius and
anti-Möbius transformations $\theta$ such that $\theta(T)=T$ and
$\theta(Q)=Q$. In the special case where $\Mod^\pm(T)$ contains the
complex conjugation map $\iota(z) = \bar{z}$, we say that the graph
$T$ is \emph{$\R$-symmetric}.

\begin{theorem}\label{thm:symmetries}
The following statements are true:
  \begin{enumerate}
  \item\label{item: symmetries-i} Let $T=(Q,E)$ be a marked
    topological Tischler graph and $f_T$ be an associated Schottky
    map. Then $\Mod^\pm(f_T,Q)=\Mod^\pm(T)$.
  \item\label{item: symmetries-ii} Let $g$ be a critically fixed
    rational map with $\#C_g\geq 3$ and $T_g$ be its alternating
    Tischler graph. Then $\Mob^\pm(g)=\Mob^\pm(T_g)$. Moreover, for
    every $Q$ with $C_g\subseteq Q\subseteq V(T_g)$, we have
    \[
      \Mod^\pm(g,Q)=\Mod^\pm(T_g)=\{[\theta]\colon \theta\in
      \Mob^\pm(g)\},
    \]
    where $[\theta]$ denotes the isotopy equivalence class of $\theta$
    rel.\ $Q$.
  \end{enumerate}
\end{theorem}

\begin{remark}
  When $g$ has only two critical points, that is, when
  $g(z)=\bar{z}^d$ up to conjugation, the second statement is not
  true. Indeed, $\Mob^\pm(g)\subsetneq\Mob^\pm(T_g)$, as the latter
  also contains the scaling maps $z\mapsto \lambda z$ with
  $\lambda\in(0,\infty)$. However, if we restrict to the maps
  $\theta\in \Mob^\pm(g)$, and also in $\Mob^\pm(T_g)$, that keep a
  set $Q$ with $C_g\subset Q\subset V(T_g)$ and $\#Q\geq 3$ invariant,
  then the respective statement is also correct.
\end{remark}

\begin{proof} \ref{item: symmetries-i} Given $[\phi]\in\Mod^\pm(T)$,
  the proof of
  \Cref{prop:equivalent-tischler-graphs}\ref{item:schottky-4}
  (which also applies when $\phi$ is orientation-reversing) implies
  that the marked anti-Thurston maps $(g,Q)$ and
  $(\phi^{-1}\circ f\circ \phi,Q)$ are isotopic, and thus
  $[\phi]\in \Mod^\pm(f, Q)$. Conversely, given
  $[\phi]\in\Mod^\pm(f,Q)$, the proof of
  \Cref{prop:schottky-uniqueness} (which also applies when $\phi$
  is orientation-reversing) implies that $\phi(T)$ is isotopic to $T$
  rel.\ $Q$, and thus $[\phi]\in \Mod^\pm(T)$.

  \ref{item: symmetries-ii} By the definition of the alternating
  Tischler graph, $\Mob^\pm(g) \subset \Mob^\pm(T_g)$. Conversely, if
  $\theta\in \Mob^\pm(T_g)$, then
  $(\theta^{-1}\circ g \circ \theta, V(T_g))$ is isotopic to
  $(g,V(T_g))$ by
  \Cref{prop:equivalent-tischler-graphs}\ref{item:schottky-4}, and
  thus $\theta^{-1}\circ g \circ\theta= g$ by the uniqueness part of
  Theorem~\ref{thm:anti-thurston}. Hence,
  $\Mob^\pm(T_g) \subset \Mob^\pm(g)$. Similarly, by the uniqueness
  part of Theorem~\ref{thm:anti-thurston}, if
  $[\phi]\in \Mod^\pm(g,Q)$, then $\phi$ is isotopic to a Möbius
  transformation commuting with $g$, that is,
  $\Mod^\pm(g,Q)\subset\{[\theta]\colon \theta\in \Mob^\pm(g)\}$. The
  remaining inclusion is immediate.
\end{proof}


\section{Critically fixed anti-polynomials}
\label{sec:crit-fixed-anti-poly}

In this section, we are going to apply our classification machinery to
the special case of critically fixed anti-polynomials, providing an
alternative approach to the results in
\cite{geyerSharpBoundsValence2008} and
\cite{lazebnikUnivalentPolynomialsHubbard2021}, as well as
generalizing them by allowing critical points of arbitrary
multiplicity. Our results also extend Tischler's classification of
critically fixed polynomials to the antiholomorphic case
\cite{tischlerCriticalPointsValues1989}.

In the anti-polynomial setting, Tischler graphs are special, because
the point at $\infty$ is a critical point of multiplicity $d-1$, where
$d$ is the degree of a given critically fixed anti-polynomial
$f: \CC\to \CC$. It then easily follows from Theorem
\ref{thm:characterization-1} that $\infty$ is a vertex of degree $d+1$
in the corresponding alternating Tischler graph $T_f$ and that
$\infty$ lies on the boundary of each of the $d+1$ faces of $T_f$. We
define the \emph{alternating Tischler tree} of $f$ to be the plane
graph $\Delta_f$ in $\CC$, whose edges are the fixed internal rays
within the immediate (superattracting) basins of finite critical
points of $f$, and whose vertices are the endpoints of these rays. In
other words, the graph $\Delta_f$ is obtained from $T_f$ by removing
the vertex at $\infty$ and all the edges incident with it. It is easy to
see that $\Delta_f$ is a plane tree in $\C$ (e.g., this follows from
Lemma~\ref{lem:poly-graphs-and-trees} below). Moreover, the tree
$\Delta_f$ has the following properties (see
Theorem~\ref{thm:characterization-1}): 
\begin{itemize}
\item the vertices of $\Delta_f$ are exactly the finite fixed points
  of $f$; 
\item the tree $\Delta_f$ is ``alternating'': every edge connects a
  finite critical fixed point with a repelling fixed point; 
\item every finite critical fixed point of multiplicity $m$ has degree
  $m+2 \ge 3$ in $\Delta_f$; 
\item every repelling fixed point is either a leaf of $\Delta_f$ or a
  vertex of degree $2$; 
\item $\Delta_f$ has exactly $d+1$ leaves, where $d$ is the degree of
  $f$.
\end{itemize}
The \emph{reduced Tischler tree} of $f$ is the plane tree
$\mathring \Delta_f$ obtained from $\Delta_f$ by forgetting all the
vertices of degree $2$. If we also remove all the leaves from
$\mathring \Delta_f$ together with the incident edges, then we get the
\emph{Hubbard tree} of the anti-polynomial $f$ (because the resulting
tree is forward invariant, connects all the finite critical points of
$f$, and every leaf is a critical point).

For
convenience, in the following discussion we implicitly always work with a
marked sphere $(S^2,\infty)$, with a designated point at infinity, and
we write $\R^2$ for $S^2 \setminus \{ \infty \}$. 

\begin{definition}\label{def:poly-Tischler-graph}
  A marked topological Tischler graph $T$ is called \emph{anti-polynomial} if $\infty$ is a vertex of $T$ and $\deg_T(\infty)$ equals the number of faces of $T$.
\end{definition}
\begin{remark}
  Note that the condition that the faces of a marked topological Tischler graph are Jordan domains implies
  that $\infty$ is on the boundary of every face of an anti-polynomial marked topological Tischler graph.
\end{remark}

Recall from Theorem \ref{thm:schottky-obstructions} that a marked topological Tischler graph $T$ is called unobstructed if for any two distinct faces $A$ and $B$, any two edges of $T$ in $\partial A \cap \partial B$ are adjacent. The following corollary provides a simple characterization of (un)obstructed anti-polynomial marked topological Tischler graphs.

\begin{corollary}
  \label{cor:poly-tischler-graphs}
  Let $T=(Q,E)$ be an anti-polynomial marked topological Tischler graph. Then  $T$ is unobstructed if and only if $T$ has no vertices of degree $2$ connected by an edge. In particular, if $T$ is alternating or reduced then $T$ is always unobstructed. 
  
  Furthermore, if $T$ is unobstructed and $(f_T,Q)$ is an associated marked Schottky map, then $(f_T,Q)$ is combinatorially equivalent to a marked critically
  fixed anti-polynomial map, unique up to affine conjugacy.
\end{corollary}

\begin{proof}
  First, suppose that $T$ is an anti-polynomial marked topological
  Tischler graph with an edge $e$ connecting two vertices $p, q$ of
  degree $2$. Suppose $a$ and $b$ are the edges of $T$ incident with $p$
  and $q$, respectively, and different from $e$. Since $T$ has at
  least three faces and all of them are Jordan domains, the edges
  $a, b$ are distinct and non-adjacent. Moreover, these edges
  (together with $e$) belong to the boundaries of two distinct faces
  of $T$, and thus $T$ is obstructed.

  Conversely, assume that $T$ is obstructed, and let $A \ne B$ be two
  faces of $T$ sharing two distinct non-adjacent edges
  $a, b \subset \partial A \cap \partial B$. Suppose $\gamma$ is a
  simple closed curve in $S^2 \setminus Q$ intersecting (transversely)
  $T$ exactly twice, once in $a$ and once in $b$. We denote by $U$ and
  $W$ the two components of $S^2 \setminus \gamma$ so that
  $\infty \in U$. Let $p$ and $q$ be the endpoints of $a$ and $b$ that
  lie in $W$, respectively. Note that $p \neq q$, because $a$ and $b$
  are non-adjacent edges. Since $\gamma$ does not intersect any face
  of $T$ other than $A$ and $B$, and since every face has $\infty$ on
  its boundary, we must have $C \subset U$ for all faces
  $C \notin \{A,B\}$. Now, as the faces $A$ and $B$ are Jordan
  domains, we have that all vertices of $T$ within $W$ have degree $2$
  and they all lie on the boundary of both faces $A$ and $B$. It
  follows that all vertices (and edges) of $T$ within $W$ lie on a
  simple path connecting the vertices $p$ and $q$. This implies that
  $W$ contains at least one edge connecting two vertices of degree
  $2$.
  
  Finally, if $(f_T,Q)$ is a marked Schottky map associated to an
  anti-polynomial marked topological Tischler graph $T=(Q,E)$, then
  the local degree $\deg(f_T,\infty)$ equals the degree of the map
  $f_T$. Assuming that $T$ is unobstructed, Theorem
  \ref{thm:schottky-obstructions} implies that $(f_T,Q)$ is equivalent
  to a marked critically fixed anti-polynomial map, unique up to
  Möbius (and thus also affine) conjugacy. This finishes the proof.
\end{proof}

We now introduce topological versions of the Tischler trees of
critically fixed anti-polynomial maps.

\begin{definition}
  An \emph{alternating topological Tischler tree} is a plane tree
  $\Delta \subset \R^2$ with at least three leaves and with every edge
  connecting a vertex of degree $\leq 2$ to a vertex of degree
  $\geq 3$. A \emph{reduced topological Tischler tree} is plane tree
  $\Delta \subset \R^2$ with at least three leaves and without
  vertices of degree $2$.
\end{definition}

\begin{lemma}\label{lem:poly-graphs-and-trees}\mbox{}
\begin{enumerate}
\item\label{item:poly-trees-graphs-marked} Let $T$ be an
  anti-polynomial alternating topological Tischler graph, and
  $\Delta_T$ be the plane graph obtained from $T$ by removing the
  vertex at $\infty$ and all the incident edges. Then $\Delta_T$ is an
  alternating topological Tischler tree. Conversely, if $\Delta$ is an
  alternating topological Tischler tree and $T_\Delta$ is a plane
  graph obtained from $\Delta$ by connecting all leaves of $\Delta$ to
  $\infty$, then $T_\Delta$ is an anti-polynomial alternating
  topological Tischler graph.
\item\label{item:poly-trees-graphs-unmarked} Similarly, let $T$ be an
  anti-polynomial reduced topological Tischler graph. Suppose $T'$ is
  the plane graph obtained from $T$ by subdividing every edge incident
  with $\infty$ into two edges sharing a new common vertex. If
  $\Delta_T$ is the plane graph obtained from $T'$ by removing the
  vertex at $\infty$ and all the incident edges, then $\Delta_T$ is a
  reduced topological Tischler tree. Conversely, if $\Delta$ is a
  reduced topological Tischler tree, let $T_\Delta$ be a plane graph
  obtained from $\Delta$ by first connecting all leaves of $\Delta$ to
  $\infty$ and afterwards forgetting all vertices of degree $2$ in the
  resulting graph. Then $T_\Delta$ is an anti-polynomial reduced
  topological Tischler graph.
\end{enumerate}
\end{lemma}
\begin{proof} We will only prove \ref{item:poly-trees-graphs-marked};
  the proof of \ref{item:poly-trees-graphs-unmarked} is analogous and
  left to the reader. Suppose $T$ is an anti-polynomial alternating
  topological Tischler graph, and $\Delta_T$ is the plane graph as in
  the statement. It is enough to check that $\Delta_T$ is a tree, as
  the other conditions are immediate from the definitions. By
  construction of $\Delta_T$ and the remark after Definition
  \ref{def:poly-Tischler-graph}, we have:
  \[
    \#V(\Delta_T)=\#V(T)-1, \quad \#E(\Delta_T)=\#E(T)-\#F(T), \quad
    \text{and}\quad  \#F(\Delta_T)=1.
  \]
  Since $T$ is connected, the Euler formula implies that
  \[
    \#V(\Delta_T)-\#E(\Delta_T)+\#F(\Delta_T)=
    \#V(T)-\#E(T)+\#F(T)=2,
  \]
  and thus $\Delta_T$ is connected as well. The claim that $\Delta_T$
  is a tree now follows from the fact that $\Delta_T$ has exactly one
  face.

  For the converse, let $ \Delta$ be an alternating topological
  Tischler tree and $T_\Delta$ be the plane graph as in the
  statement. Since $\Delta\subset \R^2$ is a plane tree, every face of
  $T_\Delta$ must have $\infty$ on the boundary. Suppose
  $a_1,\dots,a_n$ are all the edges of $T_\Delta$ incident with $\infty$
  and listed in cyclic order. Let $p_1,\dots,p_n$ be the endpoints of
  $a_1,\dots,a_n$ that are different from $\infty$, respectively. By
  construction, $p_1,\dots, p_n$ are exactly the leaves of $\Delta$,
  and thus $n\geq 3$. Now, let
  $P=(v_0=p_1,e_1,v_1,\dots, e_k,v_k=p_2)$ be the unique simple path
  in the tree $\Delta$ that connects the leaves $p_1$ and $p_2$. Then
  the path $P$ together with the simple path
  $(p_1,a_1,\infty,a_2,p_2)$ form a cycle in $T_\Delta$, and thus a
  Jordan curve $\gamma$ in $S^2$. Let $U$ be the complementary
  component of $\gamma$ that does not contain any of the edges
  $a_3,\dots, a_n$. Then $U$ must be a face of $\Delta_T$; for
  otherwise, $U$ contains at least one vertex of $\Delta$, and thus
  also a leaf of $\Delta$, which is a contradiction. The previous
  argument implies that $T_\Delta$ has exactly $n$ faces (namely, one
  face ``between'' any two consecutive edges at $\infty$), each of
  which is a Jordan domain. It follows that $T_\Delta$ is an
  anti-polynomial marked topological Tischler graph. Since the tree
  $\Delta$ is alternating, the graph $T_\Delta$ must be alternating as
  well. This completes the proof of
  \ref{item:poly-trees-graphs-marked}.
\end{proof}

The last lemma implies that there is a canonical one-to-one
correspondence between the equivalence classes of anti-polynomial
alternating topological Tischler graphs and the isomorphism classes of
alternating topological Tischler trees, and similarly between the
reduced versions.  (This easily follows from 
\Cref{prop:graph-isotopy-criterion} and the fact that, up to homotopy,
there is a unique arc connecting each leaf of an alternating
topological Tischler tree with $\infty$.)  Furthermore, combining the
discussion in this section together with Theorem
\ref{thm:tischler-fixed-correspondence}, we obtain the following
combinatorial classification of critically fixed anti-polynomial
maps. (The proof is straightforward and is left to the reader.)

\begin{theorem}\label{thm:class-poly}
  There is an explicit and canonical one-to-one correspondence between
  the following sets:
  \begin{itemize}
  \item the set of affine conjugacy classes of critically-fixed
    anti-polynomial maps;
  \item the set of isomorphism classes of anti-polynomial alternating
    topological Tischler graphs;
  \item the set of isomorphism classes of anti-polynomial reduced
    topological Tischler graphs;
  \item the set of isomorphism classes of alternating topological
    Tischler trees;
  \item the set of isomorphism classes of reduced topological Tischler
    trees.
\end{itemize}
\end{theorem}


\section{Decompositions of critically-fixed (anti-)Thurston maps}
\label{sec:decomp-crit-fix}

In this section, we discuss decompositions of critically fixed (anti-)Thurston maps; see Section \ref{subsec:canon-decomp} for general terminology and notation. Our main goal is to prove the following result, which generalizes \cite[Thm.~3.22]{hlushchankaCriticallyFixedThurston2022} in the orientation-preserving and unmarked setting.

\begin{theorem}\label{thm:crit-fix-decomposition} Let $(f,Q)$ be a marked critically fixed Thurston or anti-Thurston map that fixes $Q$ pointwise and let $\Gamma$ be a non-empty completely invariant multicurve. Suppose $\widehat{f}:  \widehat{\mathscr{S}}_\Gamma\to\widehat{\mathscr{S}}_\Gamma$ is the corresponding map on the small spheres with respect to $\Gamma$. Then the following statements are true:
\begin{enumerate}
    \item\label{item: decomp_thm_1} For each curve $\gamma \in \Gamma$ there is exactly one component $\gamma'$ of $f^{-1}(\gamma)$ that is homotopic to $\gamma$ in $S^2\setminus Q$. All other components $\delta'$ of $f^{-1}(\gamma)$ are null-homotopic in $S^2\setminus Q$ with $\deg(f:\delta'\to\gamma)=1$.
    \item\label{item: deccomp_thm_2} Every small sphere $\widehat S \in \widehat{\mathscr{S}}_\Gamma$ is fixed under $\widehat{f}$. Moreover, every point in $Q(\widehat S)$ is fixed under $\widehat f$.
\end{enumerate}
\end{theorem}

Although the proof of Theorem \ref{thm:crit-fix-decomposition} is similar to the proof of \cite[Thm.~3.22]{hlushchankaCriticallyFixedThurston2022}, we include it here for the sake of completeness. First, we introduce some auxiliary notation and constructions for a general marked Thurston or anti-Thurston map $(f,Q)$ with a completely invariant multicurve $\Gamma$ in $S^2\setminus Q$. 

Let $f^{-1}(\Gamma)$ be the set of all components of $f^{-1}(\bigcup_{\gamma\in \Gamma} \gamma) \subset S^2\setminus f^{-1}(Q)\subset S^2\setminus Q$. We denote by $\mathscr{S}_\Gamma$ the set of all connected components of $S^2\setminus \bigcup_{\gamma\in \Gamma} \gamma$; the set $\mathscr{S}_{f^{-1}(\Gamma)}$ is defined in a similar way. If $S$ is a component in $\mathscr{S}_\Gamma$, we denote by $\widehat{S}$ the corresponding small sphere in $\widehat{\mathscr{S}}_\Gamma$ and by $Q(\widehat{S})$ the corresponding marked set of $\widehat S$, and similarly for the components in  $\mathscr{S}_{f^{-1}(\Gamma)}$. More specifically, the sphere $\widehat S$ is obtained from the closure $\overline S$ by collapsing each boundary curve of $\overline S$ to a point, and the marked set $Q(\widehat{S})$ corresponds to the points in $S\cap Q$ together with the (pinched) boundary curves of $\overline S$.

Since the multicurve $\Gamma$ is completely invariant, for every  $S \in \mathscr{S}_\Gamma$ there is a unique component $i(S)\in \mathscr{S}_{f^{-1}(\Gamma)}$ such that $i(S)\setminus Q$ is homotopic to $S\setminus Q$ in $S^2 \setminus Q$. We may therefore identify the corresponding small spheres $\widehat{S}$ in $\widehat{\mathscr{S}}_\Gamma$ and $\widehat{i(S)}$ in $\widehat{\mathscr{S}}_{f^{-1}(\Gamma)}$ via a homeomorphism $i^*:  \big(\widehat{S}, Q(\widehat{S})\big) \to \big(\widehat{i(S)}, Q(\widehat{i(S)})\big)$ sending marked points to marked points, but not
necessarily bijectively. (The homeomorphism $i^*$ is canonical up to isotopy rel. $Q(\widehat S)$.) Note also that by construction, $f$ maps each component $S'\in \mathscr{S}_{f^{-1}(\Gamma)}$ onto a component $f(S')\in \mathscr{S}_\Gamma$, which induces a branched covering map $f_*: \big(\widehat{S'},Q(\widehat{S'})\big)  \to \big(\widehat{f(S')},Q(\widehat{f(S')})\big)$ between the associated small spheres (respecting the marked points). The map $\widehat f: \widehat {\mathscr{S}}_\Gamma\to \widehat {\mathscr{S}}_\Gamma$ on the small spheres with respect to $\Gamma$ may then be defined as the composition $f_*\circ i_*$.

Let $T_\Gamma$ be an (abstract simple) graph with the vertex set $\mathscr{S}_\Gamma$ and the edge set $\Gamma$, where two distinct components $S_1, S_2 \in \mathscr{S}_\Gamma$ are connected by an edge $\gamma\in \Gamma$ if and only if $\gamma$ is a boundary curve in both $S_1$ and $S_2$. We will denote by $T_{f^{-1}(\Gamma)}$ the corresponding graph for $f^{-1}(\Gamma)$. It is easy to check that $T_\Gamma$ and $T_{f^{-1}(\Gamma)}$ are in fact trees. Moreover, $f$ sends adjacent vertices in $T_{f^{-1}(\Gamma)}$ to adjacent vertices in $T_\Gamma$.

Similarly to \cite[Sec.~3.5]{hlushchankaCriticallyFixedThurston2022}, we now introduce two special subtrees of $T_{f^{-1}(\Gamma)}$, denoted $T^{ess}_{f^{-1}(\Gamma)}$ and $T^{\bullet}_{f^{-1}(\Gamma)}$. Namely, the tree $T^{ess}_{f^{-1}(\Gamma)}$ is the (unique) minimal subtree of $T_{f^{-1}(\Gamma)}$ whose vertex set contains $\{i(S)\colon S\in \mathscr{S}_{\Gamma}\}$, and the tree $T^{\bullet}_{f^{-1}(\Gamma)}$ is the (unique) minimal subtree of $T_{f^{-1}(\Gamma)}$ whose vertex set contains $\mathscr{S}^{\bullet}_{f^{-1}(\Gamma)}:=\{S'\in \mathscr{S}_{f^{-1}(\Gamma)}\colon S'\cap Q \neq \emptyset\}$.  

\begin{remark} For a marked critically fixed Thurston or anti-Thurston map $(f,Q)$ fixing $Q$ pointwise, Theorem~\ref{thm:crit-fix-decomposition} will imply that the trees $T^{ess}_{f^{-1}(\Gamma)}$ and $T^{\bullet}_{f^{-1}(\Gamma)}$ coincide and that $f$ provides an isomorphism between $T^{\bullet}_{f^{-1}(\Gamma)}$ and $T_{\Gamma}$.
\end{remark}

The next lemma describes the structure of the two chosen subtrees of $T_{f^{-1}(\Gamma)}$ (the proof is straightforward and is left to the reader); compare \cite[Lems.~3.23 and 3.24]{hlushchankaCriticallyFixedThurston2022}.

\begin{lemma}\label{lem:trees_structure} 
The following statements are true:
\begin{enumerate}

\item\label{item: trees_edges} The edge sets of $T^{ess}_{f^{-1}(\Gamma)}$ and $T^{\bullet}_{f^{-1}(\Gamma)}$ consist of all essential and all not null-homotopic curves in $f^{-1}(\Gamma)$, respectively.  In particular,  $T^{ess}_{f^{-1}(\Gamma)}$ is a subtree of $T^{\bullet}_{f^{-1}(\Gamma)}$.

\item\label{item: ess_tree} The tree $T^{ess}_{f^{-1}(\Gamma)}$ is obtained from $T_{\Gamma}$ by edge subdivision: if two components $S_1,S_2 \in \mathscr{S}_\Gamma$ are connected in $T_\Gamma$ by an edge $\gamma\in \Gamma$ then the components $i(S_1), i(S_2)$ are connected in $T^{ess}_{f^{-1}(\Gamma)}$ by a simple path consisting of all edges $\delta' \in f^{-1}(\Gamma)$ that are homotopic to $\gamma$ in $S^2\setminus Q$.

\item\label{item: marked_tree} Let $S'\in \mathscr{S}^{\bullet}_{f^{-1}(\Gamma)}$.  Suppose $q\in Q$ is a marked point in $S'$ and $S\in \mathscr{S}_{\Gamma}$ is the component containing $q$.  Then either $i(S)=S'$ and $S'$ is a vertex of $T^{ess}_{f^{-1}(\Gamma)}$,  or $i(S)\neq S'$ and $S'\in V(T^{\bullet}_{f^{-1}(\Gamma)}) \setminus V(T^{ess}_{f^{-1}(\Gamma)})$.  In the latter case,  $S'$ is a leaf of $T^{\bullet}_{f^{-1}(\Gamma)}$ with $S'\cap Q= \{q\}$.  Furthermore,  $S'$ and $i(S)$ are connected in  $T^{\bullet}_{f^{-1}(\Gamma)}$ by a simple path consisting of all $q$-peripheral curves $\delta' \in f^{-1}(\Gamma)$.
\end{enumerate}
\end{lemma}

Here a curve $\delta'\in f^{-1}(\Gamma)$ is called \emph{$q$-peripheral} for $q\in Q$ if for a component $U$ of $S^2\setminus \delta'$ we have $U\cap Q= \{q\}$.  

We are finally ready to provide  the proof of Theorem \ref{thm:crit-fix-decomposition}. 

\begin{proof}[Proof of Theorem \ref{thm:crit-fix-decomposition}]
    Suppose $(f,Q)$ and $\Gamma$ are as in the statement; in particular, $C_f\subset Q$ and $f(q) = q$ for all $q\in Q$. We will say that a vertex of $T_\Gamma$ or of $T^{\bullet}_{f^{-1}(\Gamma)}$ is \emph{marked} if it contains a point from $Q$. 
 
Let 
\begin{equation*}
P':=\big(S'_0, \delta'_1, S'_1,\dots, \delta_n', S'_n \big),
\end{equation*}
where each $S'_j$ is a vertex of $T^\bullet_{f^{-1}(\Gamma)}$ and each $\delta'_j$ is an edge of $T^\bullet_{f^{-1}(\Gamma)}$,
be a maximal (i.e., non-extendable) simple path in $T^{\bullet}_{f^{-1}(\Gamma)}$ such that 
\begin{equation*}
f(P'):=\big(f(S'_0), f(\delta'_1), f(S'_1),\dots, f(\delta'_n), f(S'_n)\big)
\end{equation*}
is a simple path in $T_\Gamma$.
We will call such a path $P'$ a \emph{maximal injective path} in $T^{\bullet}_{f^{-1}(\Gamma)}$. (Note that $P'$ must have positive length.) 

\begin{claim}
  \label{claim:1}
  The start and end vertices of $P'$ are marked.
\end{claim}

It is sufficient to show that $S_0'$ is a marked vertex of $T^{\bullet}_{f^{-1}(\Gamma)}$. We argue by contradiction and suppose that $S'_0 \cap Q = \emptyset$. 

Set $\gamma:=f(\delta'_1)$, and let $E(S'_0)$ be the set of all edges that are incident with $S'_0$ in $T^{\bullet}_{f^{-1}(\Gamma)}$.
Note that $\#E(S'_0) \geq 2$; for otherwise $S'_0$ is a leaf of $T^{\bullet}_{f^{-1}(\Gamma)}$ and thus must be marked by the definition of $T^{\bullet}_{f^{-1}(\Gamma)}$. Furthermore, we have $f(\delta')=\gamma=f(\delta'_1)$ for each edge $\delta'\in E(S'_0)$ due to maximality of $P'$.

Let $\widetilde S'$ be the unique component in $\mathscr{S}_{f^{-1}(\{\gamma\})}$ that contains $S'_0$. Then $\widetilde S' \cap C_f = \emptyset$, because $S'\cap Q= \emptyset$ (by assumption) and  every $\delta'\in E(S'_0)$ is a boundary curve of $\widetilde S'\supset S'$ (while any other boundary curve of $\widetilde S'$ must be null-homotopic). It follows that $f|_{\widetilde S'} : \widetilde S'\to  f(\widetilde S')$ is a covering map. Hence $\widetilde S'$ is simply connected, as $f(\widetilde S')\in \mathscr{S}_{\{\gamma\}}$ is a Jordan domain. This is a contradiction, because $\#E(S'_0) \geq 2$ and thus $\partial \widetilde S'$ has at least two components.

\medskip

By Claim 1, there are marked points $q_0,q_n \in Q$ such that $q_0\in S_0'$ and $q_n\in S_n'$. Let $S_0$ and $S_n$ be the vertices of $T_\Gamma$ that contain $q_0$ and $q_n$, respectively. Lemma \ref{lem:trees_structure}\ref{item: ess_tree} implies that either $i(S_0)=S'_0$ or $S'_0$ is a leaf of $T^{\bullet}_{f^{-1}(\Gamma)}$ that is connected to $i(S_0)$ by a simple path consisting of all $q_0$-peripheral curves in $f^{-1}(\Gamma)$. In either case, the path $P'$ must pass through $i(S_0)$; moreover, $i(S_0)$ is the first vertex of $T^{ess}_{f^{-1}(\Gamma)}$ on the path $P'$. Similarly,  $i(S_n)$ is the last vertex of $T^{ess}_{f^{-1}(\Gamma)}$ on the path $P'$. 

Since $f$ fixes the marked set $Q$ pointwise, we get that $f(S'_0) = S_0$ and $f(S'_n)=S_n$. By the choice of $P'$, we conclude that $f(P')$ is a simple path connecting $S_0$ and $S_n$ in $T_\Gamma$. At the same time, Lemma \ref{lem:trees_structure}\ref{item: ess_tree} implies that the subpath of $P'$ between $i(S_0)$ and $i(S_n)$ is not shorter than $f(P')$. Furthermore, this subpath has to pass through all the vertices
$i(f(S'_0))=i(S_0), i(f(S'_1)),\dots, i(f(S'_{n-1})),i(f(S'_n))=i(S_n)$
and in this particular order. Combining these facts, we obtain the following two claims.

\begin{claim}
  \label{claim:2}
For each $j=0,\dots, n$, we have $i(f(S'_j)) = S'_j$. In particular, all vertices of $P'$ are in $T^{ess}_{f^{-1}(\Gamma)}$.
\end{claim}

\begin{claim}
  \label{claim:3}
  For each $j=1,\dots, n$, the curve $\delta'_j$ is essential and homotopic to $f(\delta'_j)$ in $S^2\setminus Q$. Furthermore, $\delta'_j$ is the only curve in $f^{-1}(\Gamma)$ that is homotopic to $f(\delta'_j)$ in $S^2\setminus Q$.
\end{claim}

\medskip

To prove part \ref{item: decomp_thm_1} of Theorem \ref{thm:crit-fix-decomposition}, we fix a curve $\gamma\in \Gamma$. Since the multicurve $\Gamma$ is completely invariant, there is a component $\gamma'$ of $f^{-1}(\Gamma)$ that is homotopic to $\gamma$ in $S^2\setminus Q$. By Lemma~\ref{lem:trees_structure}\ref{item: trees_edges}, the curve $\gamma'$ corresponds to an an edge of $T^{\bullet}_{f^{-1}(\Gamma)}$, and thus it is contained in some maximal injective path in $T^{\bullet}_{f^{-1}(\Gamma)}$. Claim 3 implies that $\gamma'$ is the only component of $f^{-1}(\Gamma)$ that is homotopic to $f(\gamma')\in \Gamma$ in $S^2\setminus Q$; in particular, we have that $f(\gamma')=\gamma$. Now if $\delta'$ is an arbitrary component of $f^{-1}(\gamma)$, then either $\delta'=\gamma'$ or $\delta'$ is null-homotopic in $S^2\setminus Q$. (Indeed, if $\delta'$ is not null-homotopic, it is an edge in some maximal injective path in $T^{\bullet}_{f^{-1}(\Gamma)}$, and thus it must be the curve $\gamma'$ by Claim 3 and the discussion above.) In the latter case, let $U$ be the component of $S^2\setminus \delta'$ with $U\cap Q=\emptyset$ and $\widetilde S'$ be the component of $\mathscr{S}_{f^{-1}(\{\gamma\})}$ with $\widetilde S' \subset U$ and $\delta'\subset \partial \widetilde S'$.
Since $\widetilde S' \cap C_f = \emptyset$, we have that $f|_{ \widetilde S'} :  \widetilde S' \to f( \widetilde S')$ is a covering map. But $f( \widetilde S')\in \mathscr{S}_{\{\gamma\}}$ is a Jordan domain,  so $f|_{\widetilde S'}$ is a homeomorphism and $\deg(f\colon \delta'\to \gamma)=1$. (We also obtain that $U=\widetilde S'$.) This finishes the proof of part \ref{item: decomp_thm_1} of the theorem. 

To prove part \ref{item: deccomp_thm_2}, let $\widehat{S}\in \widehat{\mathscr{S}}_\Gamma$ be an arbitrary small sphere and $S$ be the corresponding component of $\mathscr{S}_\Gamma$. Since $i(S)$ is a vertex of $T^{ess}_{f^{-1}(\Gamma)}$ (see Lemma \ref{lem:trees_structure}), it is contained in some maximal injective path in $T^{\bullet}_{f^{-1}(\Gamma)}$. Claim 2 now implies that $i(f(i(S))) = i(S)$. Therefore, $f(i(S))=S$ and $\widehat f (\widehat S) = \widehat S$. Finally, suppose $\widehat{q}\in Q(\widehat S)$ is a marked point. If $\widehat{q}$ corresponds to a point in $S\cap Q$, it is fixed by $\widehat f$, because $f$ fixes $Q$ pointwise. Otherwise, $\widehat{q}$ corresponds to a boundary curve of $S$, and $\widehat f$ fixes $\widehat{q}$ by part \ref{item: decomp_thm_1}. This completes the proof of the theorem. 
\end{proof}

Similar to the orientation-preserving case \cite[Cor.~3.27]{hlushchankaCriticallyFixedThurston2022}, we may now deduce that fixed Levy curves are essentially the only relevant obstructions for critically fixed anti-Thurston maps.

\begin{corollary}\label{cor:crit_fix_obstructions}
Let $(f,Q)$ be a marked critically fixed Thurston or anti-Thurston map that fixes $Q$ pointwise. Then every obstruction of $(f,Q)$ contains a fixed Levy curve. In particular, $(f,Q)$ is realizable if and only if $f$ does not have a fixed Levy curve.
\end{corollary}

\begin{proof}
    We first note that $(f,Q)$ is never a $(2,2,2,2)$-map, and thus we may use Thurston's characterization (Theorems \ref{thm:Thurston_theorem} or \ref{thm:anti-thurston}). The statement now easily follows from Theorem~\ref{thm:crit-fix-decomposition}, since every obstruction contains an irreducible one, and every irreducible obstruction is contained in a completely invariant one (see Lemma \ref{lem:obstruction-types}). In fact, we get that every irreducible obstruction for $(f,Q)$ is a fixed Levy cycle.
\end{proof}

We also record another easy corollary to Theorem \ref{thm:crit-fix-decomposition}, describing the properties of canonical obstructions and decompositions for critically fixed \antiThurston maps; compare \cite[Cor.~3.26]{hlushchankaCriticallyFixedThurston2022}.

\begin{corollary}\label{cor:can_crit_fix_decomposition}
    Let $(f,Q)$ be a marked critically fixed \antiThurston map that fixes $Q$ pointwise and $\Gamma:=\Gamma_{f,Q}$ be the canonical obstruction for $(f,Q)$. Then the following statements are true:
\begin{enumerate}
    \item\label{item: can_levy_1} For every curve $\gamma\in \Gamma$ there is exactly one component $\gamma'$ of $f^{-1}(\gamma)$ that is homotopic to $\gamma$ in $S^2\setminus Q$ and satisfies $\deg(f:\gamma'\to \gamma)=1$. All other components $\delta'$ of $f^{-1}(\gamma)$ are null-homotopic and also satisfy $\deg(f:\delta'\to \gamma)=1$. In particular, each curve $\gamma\in \Gamma$ is a fixed Levy curve.
    \item\label{item: can_levy_2} Every small sphere $\widehat{S}\in\widehat{\mathscr{S}}_\Gamma$ is fixed under $\widehat f$. Moreover, every point in $Q(\widehat S)$ is fixed under $\widehat f$.
    \item\label{item: can_levy_3} If $S\in \mathscr{S}_\Gamma$ satisfies $S\cap C_f=\emptyset$, then 
    $\widehat f|_{\widehat{S}}$ is a homeomorphism.
     \item\label{item: can_levy_4} If $S\in \mathscr{S}_\Gamma$ satisfies $S\cap C_f\neq\emptyset$, then 
     $\big(\widehat f|_{\widehat{S}}, Q(\widehat{S})\big)$ is equivalent to a marked critically fixed (anti-)rational map of degree $d(\widehat{S})=1+\frac{1}{2}\sum_{c\in S\cap C_f} (\deg(f,c)-1)$.
     \item\label{item: can_levy_5} If distinct components $S_1, S_2\in \mathscr{S}_\Gamma$ satisfy $\partial S_1\cap \partial S_2 \neq \emptyset$, then either $S_1\cap C_f \neq \emptyset$ or $S_2\cap C_f \neq \emptyset$.
\end{enumerate}    
\end{corollary}

\begin{proof}
  Suppose $(f,Q)$ and $\Gamma$ are as in the statement. Parts
  \ref{item: can_levy_1} and \ref{item: can_levy_2} follow immediately
  from \Cref{thm:crit-fix-decomposition}, because the canonical
  obstruction $\Gamma$ is simple and completely invariant (see
  \Cref{thm:canonical-obstruction-is-empty}).
  \Cref{thm:top-char-can-obstruction} and part \ref{item:
    can_levy_1} imply parts \ref{item: can_levy_3} and \ref{item:
    can_levy_4}, where the formula for $d(\widehat S)$ follows from
  the Riemann-Hurwitz formula.

  To show part \ref{item: can_levy_5}, suppose
  $S_1, S_2\in \mathscr{S}_\Gamma$ are distinct components with
  $\partial S_1\cap \partial S_2\neq \emptyset$, that is,
  $\partial S_1\cap \partial S_2 = \gamma$ for some curve
  $\gamma\in \Gamma$. Set $\Gamma':=\Gamma\setminus\{\gamma\}$ and
  $S'_\gamma:=S_1\cup S_2 \cup \gamma$. Then
  \[
    \mathscr{S}_{\Gamma'}= \big(\mathscr{S}_\Gamma\setminus \{S_1,
    S_2\}\big) \sqcup \{ S'_\gamma\}.
  \]
  By part \ref{item: can_levy_1}, the multicurve $\Gamma'$ is a
  completely invariant obstruction for $(f,Q)$. Suppose
  $\widehat f' : \widehat{\mathscr{S}}_{\Gamma'} \to
  \widehat{\mathscr{S}}_{\Gamma'}$ is the induced map on the small
  spheres with respect to $\Gamma'$. Then every small sphere
  $\widehat{S'}\in \widehat{\mathscr{S}}_{\Gamma'}$ is fixed under
  $\widehat f'$ by \Cref{thm:crit-fix-decomposition}\ref{item:
    deccomp_thm_2}. Moreover, if
  $\widehat{S'}\neq \widehat {S'_\gamma}$, parts \ref{item:
    can_levy_1}-\ref{item: can_levy_4} imply that the respective small map
  $\widehat f' : \big(\widehat{S'}, Q(\widehat{S'})\big) \to
  \big(\widehat{S'}, Q(\widehat{S'})\big)$ is either a homeomorphism
  or a marked \antiThurston map that is equivalent to a marked critically
  fixed (anti-)rational map. Now if
  $S_1\cap C_f = S_2\cap C_f = \emptyset$, then the map
  $\widehat f'\colon \big(\widehat{S'_\gamma},
  Q(\widehat{S'_\gamma})\big) \to \big(\widehat{S'_\gamma},
  Q(\widehat{S'_\gamma})\big)$ is a homeomorphism.  It follows that
  the obstruction $\Gamma' \subsetneq \Gamma$ satisfies the conditions
  from \Cref{thm:top-char-can-obstruction}, which is a
  contradiction.
\end{proof}

\subsection{A critically fixed non-Schottky example}
\label{subsec:non-schottky-ex}

\begin{figure}[t]
  \centering
  \begin{tikzpicture} [line width=.8pt]
    \def\r{2cm}
    
    \def\s{.1*\r}
    
    \tikzstyle{vertex}=[circle,fill=black,minimum size=\s,inner sep=0pt]
    
    \foreach \name/\angle in {A/0, B/45, C/90, D/135, E/180,
                              F/225, G/270, H/315} {
      \node[vertex] (\name) at (\angle:\r) {};
    }
    \draw circle (\r);
    \foreach \i/\j in {A/B, C/D, E/F, G/H} {
      \draw (\i) to [bend left=20] (\j);
    }

    \foreach \angle in {22.5, 112.5, 202.5, 292.5} {
      \draw[red, rotate around={\angle:(\angle:.9*\r)}] (\angle:.9*\r) ellipse ({.3*\r} and {.5*\r});
    }
  \end{tikzpicture}
  \caption{A reduced topological Tischler graph $T=(Q,E)$ (in black)
    and the canonical obstruction $\Gamma$ for an associated marked
    Schottky map $(f_T,Q)$ (in red), consisting of four fixed Levy
    curves. Let $S\in \mathscr{S}_\Gamma$ be the unique component that
    does not contain any critical points of $f_T$. Replacing the
    original map $f_T$ on $S$ with a pseudo-Anosov map yields a
    critically fixed anti-Thurston map that is not equivalent to a
    Schottky map.}
  \label{fig:non-schottky-ex}
\end{figure}

In light of the discussion above, it is not hard to see how to
construct critically fixed anti-Thurston maps that are not
combinatorially equivalent to Schottky maps. Any anti-Thurston map $f$
equivalent to a Schottky map must admit a connected plane graph $T$
with $V(T)\supset C_f$, all of whose edges are invariant with respect
to $(f, C_f)$. In order to obtain a map without this property, we can
make sure that one of the degree $1$ small maps in the canonical
decomposition is a pseudo-Anosov map, which will violate the existence
of such a graph. The decomposition theory is not needed in this
subsection, but it provides a guideline and illustration of the
relevant properties of the example.

For the following construction, see \Cref{fig:non-schottky-ex}. We
start with a reduced topological Tischler graph $T=(Q,E)$ obtained
from an octagon, i.e., a cycle of length $8$ in $S^2$, by doubling
every other edge. Let $f=f_T$ be an associated Schottky map. Note that
$Q=C_{f}$, that is, $(f,Q)$ is an unmarked anti-Thurston map. Clearly,
$(f,Q)$ is obstructed, and it is easy to see that the canonical
obstruction $\Gamma=\Gamma_{f,Q}$ for $(f,Q)$ consists of four
disjoint fixed Levy curves, each surrounding one doubled pair of
edges. Let $S\in \mathscr{S}_\Gamma$ be the unique component that
contains no critical points of $f$. Up to modifying $f$ by isotopy
within the two non-digonal faces of $T$, we may assume that the
Schottky map $f$ fixes each curve in $\Gamma$ and $f|_{\overline S}$
is an involution. Replacing this involution with an
orientation-reversing pseudo-Anosov homeomorphism that agrees with $f$
on $\partial S$, we obtain the desired non-Schottky critically fixed
anti-Thurston map $g$.  Indeed, since $g$ is pseudo-Anosov, there can
be no invariant arc with respect to $(g,Q)$ that connects two critical
points in any two different components in $\mathscr{S}_\Gamma$. Hence,
there is no connected graph with invariant edges that joins all the
critical points of $g$.

\section{Multi-Schottky maps}
\label{sec:multi-schottky}

 In \Cref{sec:plane-graphs-schottk}, we discussed how to
  associate a Schottky map, i.e., a critically fixed anti-Thurston
  map, to each marked topological Tischler graph. In
  \Cref{subsec:non-schottky-ex}, we saw that not every critically fixed
  anti-Thurston map admits a Schottky model. So, in order to classify
  all critically fixed anti-Thurston maps (up to combinatorial
  equivalence), we need to extend the notion of Schottky maps.

\subsection{Multi-Schottky maps} \label{subsec:multi-schottky}

Before introducing a generalization of Schottky maps, which we call \emph{multi-Schottky maps}, we must first provide some auxiliary terminology and constructions.

Let $T=(Q,E)$ be a marked topological Tischler graph. We say that a vertex $v$ of $T$ is \emph{critical} if $\deg_T(v)\geq 3$; in other words, $v$ is critical if it is a critical point of a marked Schottky map associated with $T$ (see \Cref{prop:equivalent-tischler-graphs}\ref{item:schottky-2}). An edge $e$ of $T$ is called \emph{critical} if it connects two critical vertices of $T$. For convenience, we will use the notation $\partial e$ to denote the set of endpoints of $e$. 

We now define a \emph{blow-up} of $T$ with respect to a (possibly empty) subset $\Lambda\subset E$ of critical edges of $T$. Roughly speaking, it is a plane graph obtained by doubling every edge in $\Lambda$. More formally, for every edge $e\in \Lambda$ we choose a Jordan domain $D_e$ so that the following conditions hold:
\begin{itemize}
    \item $e\subset D_e$ and $\partial e \subset \partial D_e$ for all $e\in \Lambda$. In other words, $e$ is a crosscut of $D_e$.
    \item $\overline{D_e}\cap T = \partial e$ for each $e\in \Lambda$. In other words, $\overline{D_e}$ intersects $T$ exactly at the endpoints of $e$. 
    \item For all distinct edges $e_1,e_2\in \Lambda$, we have $\overline{D_{e_1}}\cap \overline{D_{e_2}} = \partial e_1\cap \partial e_2$. In particular, the Jordan domains $D_e$, $e\in \Lambda$, are pairwise disjoint.
\end{itemize}

By the above, the two endpoints of each edge $e\in \Lambda$ subdivide
$\partial D_e$ into two open arcs, which we call the \emph{blow-ups
of $e$} and denote by $e^+$ and $e^-$. Let $\TT_{\Lambda}$ be the plane
graph obtained from $T=(Q,E)$ by replacing every edge $e\in \Lambda$
with the two blow-ups of $e$, that is,
\[\text{$V(\TT_\Lambda)=Q$ \quad and \quad $E(\TT_\Lambda)= (E\setminus
    \Lambda) \cup \bigcup_{e\in \Lambda} \{e^+,e^-\}$.}\]
Note that every Jordan domain $D_e$, $e\in \Lambda$, is a digonal face of
$\TT_\Lambda$, and we will refer to such a face as a \emph{Levy face} of
$\TT_\Lambda$. The remaining faces of $\TT_\Lambda$ will be called \emph{regular}. 

The graph $\TT_\Lambda$ comes with a natural involution $\eta:\TT_\Lambda\to \TT_\Lambda$ that fixes every edge in $E\setminus \Lambda$ and swaps the two blow-ups of each $e\in \Lambda$. More concretely, $\eta$ satisfies the following properties:
\begin{itemize}
    \item $\eta|_Q= \id_Q$;
    \item $\eta|_e = \id_e$ for every $e\in E\setminus \Lambda$;
    \item $\eta(e^+)=e^-$, $\eta(e^-)=e^+$, and $\eta|_{e^+}=(\eta|_{e^-})^{-1}$ for each $e\in \Lambda$.
\end{itemize}

\begin{definition}\label{def:blow-up}
  The plane graph $\TT_\Lambda$ constructed as above is called a
  \emph{blow-up} of the marked topological Tischler graph $T=(Q,E)$
  with respect the subset $\Lambda \subset E$ of critical edges of
  $T$. 
  The involution $\eta: \TT_{\Lambda}\to \TT_{\Lambda}$ defined above
  will be referred to as an \emph{edge involution} of $\TT_\Lambda$.
\end{definition}

\begin{remark}
    Clearly, for a given marked topological Tischler graph $T=(Q,E)$ and a subset $\Lambda \subset E$ of its critical edges, the respective blow-up $\TT_\Lambda$ and  edge
  involution $\eta: \TT_\Lambda\to \TT_\Lambda$ are not uniquely
  defined. However, they are uniquely specified up to isotopy rel.\
  $Q$.
\end{remark}

A plane graph $\TT$ may be viewed as a blow-up of some marked topological Tischler graph $T$ with the set of Levy faces $F_\Lambda\subset F(\TT)$ if and only if $\TT$ and $F_\Lambda$ satisfy the following conditions:
  \begin{itemize}
      \item every face of $\TT$ is a Jordan domain (in particular, $\TT$ is connected);
      \item $\# F(\TT) - \# F_\Lambda \geq 3$;
      \item every face in $F_\Lambda$ is digonal and no two distinct faces in $F_\Lambda$ share a common boundary edge (though they may have a common boundary vertex). 
  \end{itemize}
  Namely, the marked topological Tischler graph $T$ is obtained from $\TT$ by replacing the two boundary edges of each digonal face $A\in F_\Lambda$ with a crosscut of $A$; these crosscuts then compose the respective subset $\Lambda$ of critical edges of $T$. (Different choices of $F_\Lambda\subset F(\TT)$ lead to different, typically non-isotopic,  marked topological Tischler graphs $T$.) 

\begin{definition}\label{def:multi-tischler}
A \emph{multi-Tischler graph} is a plane graph $M$ with the following properties: 
\begin{enumerate}
\item every connected component of $M$ is either an isolated vertex, 
in which case we call it a \emph{vertex component}, or a blow-up of an unobstructed 
marked topological Tischler graph, in which case we call it a \emph{Tischler component};
\item $M$ has at least one Tischler component;
\item  for every Tischler component $\TT$, every regular face $A$ of $\TT$ is a face of $M$,
that is, $A$ does not contain any vertices of $M$;
\item for every Tischler component $\TT$, every Levy face $A$ of $\TT$ contains at least two vertices of $M$. (In particular, $A$ is not a face of $M$.)
\end{enumerate}
We say that a multi-Tischler graph $M$ is \emph{reduced} if it has no vertex components and each of its Tischler components is a blow-up of an (unobstructed) reduced topological Tischler graph.
\end{definition}

A priori, the description of a Tischler component $\TT$ of a multi-Tischler graph $M$ needs both the plane graph $\TT$ and the set $F_\Lambda \subset F(\TT)$ of its Levy faces. However, the last two conditions in the definition above enable us to uniquely identify $F_\Lambda$ from the plane graph $M$.
Furthermore, it is straightforward to check that every simply connected face of $M$ is a regular face of a Tischler component of $M$; in particular, it is a Jordan domain.

We will use the notation $M^\mu$ to denote the union of the closures of all multiply connected faces of the multi-Tischler graph $M$. By definition, for every such face, each of its boundary circuits is either a cycle of length $2$ (given by the boundary circuit of a Levy face of a Tischler component of $M$) or a cycle of length $0$ (given by a vertex component of $M$). In the former case, the pair of edges composing the cycle of of length $2$ is called a \emph{Levy edge pair} in $M$. 

\begin{definition}\label{def:multi-Tischler-pair}
A \emph{multi-Tischler pair} is a pair $(M,\mu)$, where $M$ is a multi-Tischler graph and $\mu$ is an orientation-reversing self-homeomorphism of $M^\mu$ such that 
\begin{itemize}
    \item $\mu$ fixes every point in $V(M)\cap M^\mu$; and
    \item $\mu$ acts as an involution on every Levy edge pair $(e^+,e^-)$ in $M$, that is, $\mu|_{e^+}=(\mu|_{e^-})^{-1}$.
\end{itemize}
A multi-Tischler pair $(M,\mu)$ is called \emph{reduced} if $M$ is a reduced multi-Tischler graph.
\end{definition}

\begin{remark}
    It is immediate from this definition that, given a multi-Tischler pair $(M,\mu)$, the homeomorphism $\mu$ fixes (the closure of) every multiply connected face of $M$, as well as each of its boundary components. Furthermore, if we extend $\mu$ to $M\setminus M^\mu$ by the identity, then $\mu|_\TT$ is an edge involution for each Tischler component $\TT$ of $M$. 
\end{remark}

For our classification, we need the following two natural equivalence relations on multi-Tischler pairs. (Compare \cite[Def.~1.3]{hlushchankaCriticallyFixedThurston2022}).

\begin{definition}\label{def:multi-schottky-equiv}
  Let $(M,\mu)$ and $(M',\mu')$ be two multi-Tischler pairs.  We say
  that $(M,\mu)$ and $(M',\mu')$ are \emph{equivalent} if there exists
  an orientation-preserving homeomorphism $\phi\colon S^2 \to S^2$
  such that $\phi(M)= M'$, $\phi(V(M)) = V(M')$, and such that
  $(\phi^{-1}\circ \mu' \circ \phi)|_{M^\mu}$ is isotopic to $\mu$
  rel.\ $M^\mu\cap V(M)$. Furthermore, if $\phi\in \Homeo^+_0(S^2,
  V(M))$, we say that $(M,\mu)$ and $(M',\mu')$ are \emph{isotopic}.
\end{definition}

With these preliminaries, we are ready to define multi-Schottky maps.

\begin{definition}\label{def:multi-schottky-map}
  Let $(M, \mu)$ be a multi-Tischler pair. A \emph{multi-Schottky map}
  associated to $(M,\mu)$ is a self-map $f:=f_{(M,\mu)}: S^2\to S^2$
  of the sphere that satisfies the following conditions:
\begin{itemize}
    \item For every multiply connected face $A$ of $M$, we have $f|_{\overline A} = \mu|_{\overline A}$.
    \item For every simply connected face $A$ of $M$, let $\TT$ be the unique Tischler component of $M$ with $\partial A \subset \TT$ and let $\eta$ be the corresponding edge involution of $\TT$ induced by $\mu$. 
    We require that $f|_{\overline A}$ is an orientation-reversing homeomorphism extending $\eta|_{\partial A}$. In particular, we have
    \[f(A)=S^2 \setminus \left(\overline{A} 
    \cup \bigcup_{D}
    \overline{D}\right),\]
    where the union is taken over all Levy faces $D$ of $\TT$ with $\partial D \cap \partial A \neq \emptyset$.
\end{itemize}
A marked multi-Schottky map $(f,Q)$ associated to $(M,\mu)$ is a multi-Schottky map $f=f_{(M,\mu)}$ associated to $(M,\mu)$ together with the set $Q= V(M)$.
\end{definition}

It is straightforward from the definition and the Alexander trick that a multi-Schottky map $f_{(M,\mu)}$ is uniquely defined up to isotopy relative to $M$. We summarize this and other basic properties of multi-Schottky maps
in the proposition below. The proof is straightforward and analogous to the proof of \Cref{prop:equivalent-tischler-graphs}, so we leave it to the reader. (In particular, the expressions for the degree of $f_{(M,\mu)}$ follow easily from the Riemann-Hurwitz and Euler formulas; see the proof of \Cref{lem:mapping-prop-model} for a similar argument.)

\begin{proposition}
  \label{prop:equivalent-multi-schottky}
  Let $(M,\mu)$ be a multi-Tischler pair and $(f,Q)$ be an associated marked multi-Schottky map. Then the following are true:
  \begin{enumerate}
  \item\label{item: multi-i} $f$ is an anti-Thurston map fixing all points in $Q=V(M)$ with $C_f=P_f=\{v\in V(M)\colon \deg_M(v) \geq 3\}$. In particular, $(f,Q)$ is a marked critically fixed
    anti-Thurston map.
  \item\label{item: multi-ii} The degree of $f$ equals $\#F(M)-K_\TT-L=\#F_s(M)-2\cdot K_\TT+1$, where $K_\TT$ is the number of Tischler components of $M$, $L$ is the total number of Levy edge pairs in $M$, and $F_s(M)$ denotes the set of simply connected faces of $M$. Furthermore, for each $v\in C_f$ we have $\deg(f,v)=\deg_M(v)-L_v-1$, where
  $L_v$ is the number of Levy edge pairs incident with $v$.
  \item\label{item: multi-iii} $f$ is not a $(2,2,2,2)$-map. Moreover, $f$ has hyperbolic orbifold if and only if $\# C_f\geq 3$.
  \item\label{item: multi-iv} If $(f', Q')$ is another marked multi-Schottky map associated to a multi-Tischler pair $(M',\mu')$,
    where the pairs $(M,\mu)$ and $(M',\mu')$ are equivalent (resp.\ isotopic), then the marked anti-Thurston maps $(f,Q)$ and $(f',Q')$ are combinatorially equivalent (resp.\ isotopic).
  \end{enumerate} 
\end{proposition}

\subsection{Canonical obstructions for multi-Schottky
  maps}\label{subsec:can-obstr-multi-schottky}
In the following, let $(M,\mu)$ be a multi-Tischler pair and $(f,Q)$
be an associated marked multi-Schottky map. Our goal in this
subsection is to describe the canonical obstruction of $(f,Q)$.

Let $\TT$ be a Tischler component of $M$ and $A$ be a Levy face of
$\TT$. Pick a small closed regular neighborhood $R_{A}$ of the Jordan
curve $\partial A$, that is, choose a (small) closed annulus $R_{A}$
with core curve $\partial A$. We denote by $\gamma_{A}$ the component
of $\partial R_{A}$ that lies within the face $A$. Note that
$\gamma_{A}$ is a simple closed curve in $S^2\setminus Q$; moreover,
up to free homotopy in $S^2\setminus Q$, it is independent of the
choice of the annulus $R_{A}$.

Set
\[
  \Gamma'_M:=\{\gamma_{A}: \text{$A$ is a Levy face of a Tischler
    components $\TT$ of $M$}\}.
\]
Note that, by construction, the curves in $\Gamma'_M$ are pairwise
disjoint but not necessarily pairwise non-homotopic in $S^2\setminus
Q$. Indeed, the curves in $\Gamma'_M$ are naturally in one-to-one
correspondence with the Jordan curves in the boundary of multiply
connected faces of $M$. So, two elements of $\Gamma'_M$ are homotopic
in $S^2\setminus Q$ if and only if they lie within the same annular
face of $M$. We denote by $\Gamma_M \subseteq \Gamma'_M$ a multicurve
in $S^2\setminus Q$ that contains exactly one curve from each
homotopy class represented in $\Gamma'_M$.

\begin{theorem}\label{thm:can-obstr-multi-schottky}
  Let $(M,\mu)$ be a multi-Tischler pair and $(f,Q)$ be an associated
  marked multi-Schottky map. Then the multicurve $\Gamma_M$
  constructed as above is the canonical obstruction for $(f,Q)$.
\end{theorem}

\begin{proof}
  Let $(M,\mu)$, $(f,Q)$, and $\Gamma:=\Gamma_M$ be as in the
  statement, and suppose that $\TT$ is a Tischler component of $M$.
  
  If $\TT$ has no Levy faces, then $M$ coincides with $\TT$ by
  \Cref{def:multi-tischler}. Hence $M=\TT$ is an unobstructed marked
  topological Tischler graph and $\Gamma=\emptyset$. It now follows
  from \Cref{def:multi-schottky-map} that $(f,Q)$ is a marked Schottky
  map associated to $\TT$. Thus $(f,Q)$ is realizable by
  \Cref{thm:schottky-obstructions} and the canonical obstruction for
  $(f,Q)$ is empty by \Cref{thm:anti-thurston}. The statement follows
  in this case.

  Suppose now that $\TT$ (and thus each Tischler component of $M$) has
  at least one Levy face. Let $\gamma=\gamma_{A}\subset S^2\setminus
  Q$ be a simple closed curve associated with a Levy face $A$ of $\TT$
  as above.

  \begin{claim*}
    There is exactly one component $\gamma'$ of $f^{-1}(\gamma)$ that
    is homotopic to $\gamma$ in $S^2\setminus Q$ and satisfies
    $\deg(f: \gamma'\to \gamma)=1$. All other components~$\delta'$ of
    $f^{-1}(\gamma)$ are null-homotopic and also satisfy
    $\deg(f:\delta'\to \gamma)=1$. In particular, $\gamma$ is a fixed
    Levy curve for $(f,Q)$.
  \end{claim*}

  By construction, $\gamma$ is contained in a multiply connected face
  $B$ of $M$ with $\partial A \subset \partial B$. By the definition
  of multi-Schottky maps, $f|_{\overline B}=\mu|_{\overline B}$ is an
  orientation-reversing self-homeomorphism of $\overline B$, and
  therefore there is a component $\gamma'$ of $f^{-1}(\gamma)$ 
  satisfying $\gamma'\subset B$ and $\deg(f: \gamma'\to
  \gamma)=1$. Since $f|_{\overline B}(\partial A)=\mu|_{\overline
    B}(\partial A)=\partial A$ and $\gamma$ is homotopic to $\partial
  A$ in $\overline B$, we get that $\gamma'$ must also be homotopic to
  $\partial A$ in $\overline B$, and thus $\gamma'$ is homotopic to
  $\gamma$ in $S^2\setminus Q$. By \Cref{def:multi-schottky-map}, any
  other component $\delta'$ of $f^{-1}(\gamma)$ must be within a
  simply connected face $A'$ of $M$. Hence $\delta'$ is null-homotopic
  in $S^2\setminus Q$, and since $f|_{\overline{A'}}$ is an
  orientation-reversing homeomorphism, we have $\deg(f:\delta'\to
  \gamma)=1$. The claim follows.

\smallskip
 
  It is immediate from the claim that $\Gamma=\Gamma_M$ is a completely invariant multicurve. We now look at the decomposition of $(f,Q)$ with respect to $\Gamma$. In the discussion below, we will follow the notation introduced in \Cref{sec:decomp-crit-fix}. 

  Suppose $\widehat S\in \widehat{\mathscr{S}}_{\Gamma}$ is a small sphere with respect to $\Gamma$, and $S$ is the respective component in $\mathscr{S}_\Gamma$. Recall that $\widehat S$ is marked by a finite set $Q(\widehat S)$ that corresponds to the points in $S\cap Q$ together with the pinched boundary components of $\overline{S}$. By  \Cref{thm:crit-fix-decomposition}\ref{item: deccomp_thm_2}, $\widehat f: \widehat{\mathscr{S}}_{\Gamma}\to \widehat{\mathscr{S}}_{\Gamma}$ fixes $\widehat S$ and also every point $\widehat{q}\in Q(\widehat S)$. 

  By the construction of $\Gamma$, if $S\cap C_f = \emptyset$, then
  $S$ is contained in a multiply connected face $B$ of $M$ and thus
  $\widehat f : \big(\widehat{S}, Q(\widehat{S})\big)\to
  \big(\widehat{S}, Q(\widehat{S})\big)$ is an (orientation-reversing)
  homeomorphism induced by $\mu|_{\overline{B}}$. Otherwise,
  $M \cap S$ consists of a single Tischler component $\TT$ of $M$. Let
  $\widehat{\TT}$ be the plane graph in $\widehat S$ induced by
  $\TT\subset S$ when pinching every boundary component of $\overline
  S$ to a point. Note that $\widehat{\TT}$ is a blow-up of an
  (unobstructed) marked topological Tischler graph such that each Levy
  face $\widehat A$ of $\widehat{\TT}$ contains a unique point
  $\widehat{q}_{A}$ from $Q(\widehat S)$. Moreover, $\deg(\widehat
  f|_{\widehat S}, \widehat{q}_A) =1$ by the claim above. Since
  $f|_\TT$ is an edge involution of $\TT$, we may assume (up to
  modifying $\big(f|_{\widehat S}, Q(\widehat S)\big)$ by isotopy)
  that $\widehat f|_{\widehat{\TT}}$ is an edge involution of
  $\widehat{\TT}$. Then $\widehat f$ is an (orientation-reversing)
  self-homeomorphism on the closure of each Levy face of $\widehat
  \TT$. For each such face $\widehat A$, let us pick two disjoint open
  arcs $\widehat{e\,}'_{A,1}, \widehat{e\,}'_{A,2} \subset \widehat A$
  connecting the point $\widehat{q}_{A}\in Q(\widehat S) \cap \widehat
  A$ with the two vertices of $\widehat{\TT}$ in $\partial \widehat
  A$. Note that their images
  $\widehat{e}_{A,1}:=\widehat{f}(\widehat{e\,}'_{A,1})$ and
  $\widehat{e}_{A,2}:=\widehat{f}(\widehat{e\,}'_{A,2})$ are also
  disjoint open arcs in $\widehat A$ (connecting the same
  endpoints). Moreover, since $\widehat A$ is a Jordan domain, the
  arcs $\widehat{e}_{A,1}$ and $\widehat{e}_{A,2}$ are homotopic to
  $\widehat{e\,}'_{A,1}$ and $\widehat{e\,}'_{A,2}$, respectively,
  where homotopy is assumed to be within the family of open arcs in
  $\big(\widehat S,Q(\widehat S)\big)$; see
  \cite[Thm.~A.6]{buserGeometrySpectraCompact2010}. Now let $\widehat
  T$ (resp.\ $\widehat T'$) be a plane graph in $\widehat S$ obtained
  from $\widehat{\TT}$ by adding $\widehat q_A$ to the vertex set and
  replacing the two edges of $\widehat{\TT}$ in $\partial \widehat A$
  with $\widehat{e}_{A,1}$ and $\widehat{e}_{A,2}$ (resp.\ with
  $\widehat{e\,}'_{A,1}$ and $\widehat{e\,}'_{A,2}$) for all Levy
  faces $\widehat A$ of $\widehat{\TT}$. It is straightforward to
  check that $\widehat T$ is an unobstructed marked topological
  Tischler graph and, moreover, $\widehat f|_{\widehat S}$, $\widehat
  T$, and $\widehat T'$ satisfy the conditions in
  \Cref{prop:fixed-graph-implies-schottky}. We conclude that
  $\big(f|_{\widehat S}, Q(\widehat S)\big)$ is isotopic to a marked
  Schottky map associated to $\widehat T$, and thus $\big(f|_{\widehat
    S}, Q(\widehat S)\big)$ is equivalent to a marked critically fixed
  anti-rational map by \Cref{thm:schottky-obstructions}.

  It follows that the the multicurve $\Gamma$ satisfies the two conditions in \Cref{thm:top-char-can-obstruction}, so, to complete the proof, it remains to check minimality. Suppose $\Gamma'\subsetneq \Gamma$ and $\gamma \in \Gamma\setminus \Gamma'$. Let $S'_\gamma\in \mathscr S_{\Gamma'}$ be the component containing $\gamma$, $\widehat{S}'_\gamma \in  \widehat{\mathscr S}_{\Gamma'}$ be the respective small sphere, and $\widehat \gamma \subset \widehat{S}'_\gamma \setminus Q(\widehat{S}'_\gamma)$ be a simple closed curve induced by $\gamma$. Note that $\widehat \gamma$ is essential in $\widehat{S}'_\gamma \setminus Q(\widehat{S}'_\gamma)$ and, moreover, $S'_\gamma \cap C_f \neq \emptyset$. Hence, the induced map on $\widehat{S}'_\gamma$ has degree $\geq 2$ and, using the claim, $\widehat \gamma$ is its fixed Levy curve. Consequently, the multicurve $\Gamma'$ does not meet the requirements from \Cref{thm:top-char-can-obstruction}, and thus $\Gamma$ must be the canonical obstruction for $(f,Q)$.
\end{proof}

\subsection{Existence of multi-Schottky models}\label{subsec:multi-shottky-model-exists} 
Let $(f,Q)$ be a marked critically fixed anti-Thurston map fixing
the marked set $Q$ pointwise. In this subsection, we show that $(f,Q)$
is isotopic to a marked multi-Schottky map.

Suppose $\Gamma:=\Gamma_{f,Q}$ is the canonical obstruction for
$(f,Q)$, and let us consider the decomposition of $(f,Q)$ with respect
to $\Gamma$. As in the previous subsection, we will follow the
notation from \Cref{sec:decomp-crit-fix}. We denote by
$\mathscr{S}_{\Gamma,\ARat}$ the set of all components $S \in
\mathscr{S}_\Gamma$ with $C_f\cap S \neq \emptyset $, by
$\widehat{\mathscr{S}}_{\Gamma,\ARat}$ the set of respective small
spheres $\widehat S\in \widehat{\mathscr{S}}_{\Gamma}$, and by
$Q(\widehat S, \Gamma)\subset Q(\widehat S)$ the set of marked points
in $\widehat S$ induced by the pinched boundary components of
$\overline S$.

By \Cref{cor:can_crit_fix_decomposition}, for every $\widehat S\in
\widehat{\mathscr{S}}_{\Gamma,\ARat}$ the respective small map
$\big(\widehat f|_{\widehat S}, Q(\widehat S)\big)$ is equivalent to a
marked critically fixed anti-rational map. This means that there is an
(unobstructed) marked topological Tischler graph
$\widehat{T}'_{S}\subset \widehat S$ with $V(\widehat{T}'_{S}) =
Q(\widehat S)$ such that an associated marked Schottky map is isotopic
to $\big(\widehat f|_{\widehat S}, Q(\widehat S)\big)$; see
\Cref{prop:fixed-graph-implies-schottky} and
\Cref{cor:crit-fixed-schottky}. Note also that every $\widehat q\in
Q(\widehat S,\Gamma)$ is incident with exactly two edges in
$\widehat{T}'_{S}$, because $\deg(\widehat f|_{\widehat S},\widehat q)
= 1$.

Let $\widehat{T}_{S}$ be the plane graph in $\widehat S$ with the same
realization as $\widehat{T}'_{S}$ but with the vertex set
$V(\widehat{T}_{S}) = V(\widehat{T}'_{S})\setminus Q(\widehat S,
\Gamma)$; in other words, $\widehat{T}_{S}$ is obtained from
$\widehat{T}'_{S}$ by removing all vertices in $Q(\widehat S, \Gamma)$
and joining the incident edges. Suppose $\widehat{\Lambda}_S\subset
E(\widehat T_S)$ is the set of all edges of $\widehat T_S$ 
containing points in $Q(\widehat S, \Gamma)$. It is immediate that
$\widehat{T}_{S}$ is an unobstructed marked topological Tischler graph
and that every edge in $\widehat\Lambda_S$ is critical. Let
$\widehat{\TT}_S$ be a blow-up of $\widehat{T}_S$ with respect to
$\widehat{\Lambda}_S$. By construction, every point in $Q(\widehat S,
\Gamma)$ is within a Levy face of $\widehat{\TT}_S$, and every Levy
face of $\widehat{\TT}_S$ contains exactly one point in $Q(\widehat S,
\Gamma)$. It follows that $\widehat{\TT}_S$ induces a plane graph
$\TT_S\subset S$ in $S^2$ with $V(\TT_S) = S\cap Q$. Note that $\TT_S$
is a blow-up of an unobstructed marked topological Tischler graph
whose Levy faces are characterized by containing a boundary component
of $\partial S$. In particular, every Levy face of $\TT_S$ contains at
least two points from $Q$.

We may now naturally view the union 
\begin{equation}\label{eq:multi-graph}
  M=M(f,Q):=Q\cup \bigsqcup_{S\in \mathscr{S}_{\Gamma,\ARat}} \TT_S
\end{equation}
as a plane graph in $S^2$ with the vertex set $V(M)=Q$. Note that, by
construction, $M$ is a multi-Tischler graph; moreover, it is uniquely defined up to isotopy rel.\ $Q$.

For each $\widehat S\in \widehat{\mathscr{S}}_{\Gamma,\ARat}$, up to
modifying the small map $\big(\widehat f|_{\widehat S}, Q(\widehat
S)\big)$ by isotopy, we may assume that $\widehat f|_{\widehat \TT_S}$
is an edge involution of $\widehat \TT_S$, because $\big(\widehat
f|_{\widehat S}, Q(\widehat S)\big)$ is isotopic to a marked Schottky
map associated to $\widehat{T}'_S$. It follows that $(f,Q)$ is
isotopic to a marked critically fixed anti-Thurston map $(g,Q)$ such
that $g|_{\TT_S}$ is an edge involution of $\TT_S$ for each $S\in
\mathscr{S}_{\Gamma,\ARat}$.

Now let $A$ be a simply connected face of $M$, and let $\TT$ be the unique Tischler component of $M$ with $\partial A \subset \TT$. Set
\begin{equation}\label{eq:a^g}
  A^g:=S^2 \setminus \left(\overline{A} \cup \bigcup_{D}
    \overline{D}\right),
\end{equation}
where the union is taken over all Levy faces $D$ of $\TT$ with
$\partial D \cap \partial A \neq \emptyset$. Since $A$ is a Jordan
domain, the set $A^g$ is a Jordan domain as well. Moreover, $g$ maps
$\partial A$ homeomorphically onto $\partial A^g$. It is then
straightforward to see that $g(A)\supset A^g$ by arguing as in the
proof of \Cref{prop:fixed-graph-implies-schottky}. Indeed, let us
connect an arbitrary point $z_1\in A^g$ to a point $z_0\in \partial
A^g\setminus C_g$ by a path $\alpha:[0,1]\to A^g$ with
$\alpha(0)=z_0$, $\alpha(1)=z_1$, and $\alpha\left((0,1]\right)
\subset A^g$. Then a lift $\alpha'$ of $\alpha$ by $g$ starting at the
unique point $w_0\in \partial A$ with $g(w_0)=z_0$ must satisfy
$\alpha'\left((0,1]\right) \subset A$. Hence $g(A)\supset
A^g$. Similarly, for every multiply connected face $A$ of $M$, we can
deduce that $g(A) \supset A$. We claim that we actually have
equalities.
    
\begin{lemma} \label{lem:mapping-prop-model}
 With $M = M(f,Q)$ and $g$ defined as above, the following holds:
  \begin{itemize}
  \item $g$ maps every simply connected face $A$ of $M$
    homeomorphically onto $A^g$ as defined in \eqref{eq:a^g}, and
  \item $g$ maps every multiply connected face $A$ of $M$
    homeomorphically onto itself.
  \end{itemize}
\end{lemma}
    
\begin{proof}
  We use a counting argument. (The same conclusion can also be derived
  from the properties of the canonical decomposition for $(f,Q)$.)

  We start with computing the degree $d$ of $g$. Fix any $S\in
  \mathscr{S}_{\Gamma,\ARat}$ and $q\in Q\cap S$, and let $\widehat q$
  be the corresponding marked point in $\widehat S$. Then we have
  \[
    \deg(g,q) = \deg(f,q)= \deg(\widehat f|_{\widehat S},\widehat {q})
    = \deg_{\widehat{T}'_S}(\widehat q)-1 =
    \deg_{\widehat{T}_S}(\widehat q)-1 =
    \deg_{\widehat{\TT}_S}(\widehat q)-L_{\widehat q} - 1,
  \]
  where $L_{\widehat q}$ is the number of Levy faces of
  $\widehat{\TT}_S$ with $\widehat q$ on the boundary (which is the
  same as the number $L_q$ of Levy edge pairs in $M$ incident with
  $q$). Then, by the Riemann-Hurwitz formula, we get that
  \begin{align*}
    2d-2 & = \sum_{q\in Q} \left(\deg(g,q)-1\right)
           = \sum_{S\in \mathscr{S}_{\Gamma,\ARat}} \,\,
           \sum_{q\in Q\cap S} \left(\deg(g,q)-1\right) \\
         & = \sum_{S\in \mathscr{S}_{\Gamma,\ARat}} \,\,
           \sum_{q\in Q\cap S} \left(\deg_{\widehat{\TT}_S}(\widehat
           q) - L_{\widehat q} - 2\right) \\
         & =\sum_{S\in \mathscr{S}_{\Gamma,\ARat}}
           \left( 2\cdot \# E(\widehat{\TT}_S) -
           2\cdot L_{\widehat{S}} - 2\cdot \#V(\widehat{\TT}_S)
           \right),
  \end{align*}  
  where $L_{\widehat{S}}$ is the number of Levy faces in
  $\widehat{\TT}_{S}$. Now, using the Euler formula for each
  $\widehat{\TT}_{S}$, we have that
  \begin{align*}
    d-1 & = \sum_{S\in \mathscr{S}_{\Gamma,\ARat}}
          \left(\# E(\widehat{\TT}_S)) -
          L_{\widehat{S}}-\#V(\widehat{\TT}_S) \right) \\
        & = \sum_{S\in \mathscr{S}_{\Gamma,\ARat}}
          \left(\# F(\widehat{\TT}_S) - 2- L_{\widehat{S}} \right)
          = \sum_{S\in \mathscr{S}_{\Gamma,\ARat}}
          \left(\# F(\widehat{\TT}_S) - L_{\widehat{S}} \right)
          - 2\cdot K_{\TT},
  \end{align*}   
  where $K_\TT$ is the number of Tischler components of $M$. Since
  there is a one-to-one correspondence between the simply connected
  faces of $M$ and the non-Levy faces of its Tischler components, we
  finally get that
  \begin{equation}\label{eq:deg-multi-model}
    d=\#F_s(M)-2\cdot K_\TT+1,
  \end{equation}
  where $F_s(M)$ denotes the set of simply connected faces of $M$.

  Now let $A$ be a multiply connected face of $M$, and let $z\in A$ be
  arbitrary. By the discussion preceding
  \Cref{lem:mapping-prop-model}, there is at least one preimage of $z$
  under $g$ in $A$. Moreover, for every simply connected face $B$ of
  $M$, there is at least one preimage of $z$ in $B$ whenever
  $B^g\supset A$. The latter condition is always true unless $A
  \subset D$ for some Levy face $D$ of the unique Tischler component
  $\TT$ with $\partial B \subset \TT$. Since for every Tischler
  component $\TT$, there is exactly one Levy face $D$ with $D\supset
  A$, and every such face $D$ shares a boundary edge with exactly two
  non-Levy faces of $\TT$ (which are also simply connected faces of
  $M$), we deduce that
  \[\#\left(g^{-1}(z) \cap \bigcup_{B\in F_s(M)} B \right) \geq
    \#F_s(M) - 2\cdot K_\TT.\]
  From the formula \eqref{eq:deg-multi-model} for the degree of $g$,
  we conclude that we have accounted for all the preimages of $z$
  under $g$. In particular, the inequality above must in fact be
  equality, and also $A$ must contain a unique preimage of
  $z$. Analyzing the preimages of points in simply connected faces of
  $M$ in a similar way, we derive the desired mapping behavior of $g$.
\end{proof}

Set \[\mu=\mu(f,Q):=g|_{M^\mu},\]
where $M^\mu$ is the union of the closures of all multiply connected
faces of $M$.  \Cref{lem:mapping-prop-model} implies that $(M,\mu)$ is
a multi-Tischler pair and, moreover, $g$ is a multi-Schottky map
associated to $(M,\mu)$. Thus we have established the following
result.

\begin{theorem}\label{thm:multi-shottky-model}
Let $(f,Q)$ be a marked critically fixed anti-Thurston map that fixes the marked set $Q$ pointwise. Then $(f,Q)$ is isotopic to a marked multi-Schottky map associated to the multi-Tischler pair $(M,\mu)$ constructed as above.
\end{theorem}

\subsection{Classification of critically fixed anti-Thurston maps}
Our goal in this subsection is to prove the following classification
result, establishing \Cref{thm:b} from the introduction.

\begin{theorem}\label{thm:class-crit-fix-anti-thurst}
  There is a canonical one-to-one correspondence between the isotopy
  (resp.\ equivalence) classes of multi-Tischler pairs and the isotopy
  (resp.\ equivalence) classes of marked critically fixed
  anti-Thurston maps that fix their marked sets pointwise.
\end{theorem}

\begin{proof}
  Let $\mathscr{P}$ be the set of isotopy classes $[(M,\mu)]$ of
  multi-Tischler pairs $(M,\mu)$, and let $\mathscr{F}$ be the set of
  isotopy classes $[(f,Q)]$ of marked critically fixed anti-Thurston
  maps $(f,Q)$ that fix the marked set $Q$
  pointwise. \Cref{prop:equivalent-multi-schottky}\ref{item: multi-iv}
  implies that the map $\Psi: \mathscr{P} \to \mathscr{F}$ given
  by $$\big[(M,\mu)\big] \mapsto \big[(f_{(M,\mu)},V(M))\big],$$
  where $f_{(M,\mu)}$ denotes a multi-Schottky map associated to the
  multi-Tischler pair $(M,\mu)$, is well-defined. We also know that
  $\Psi$ is surjective by \Cref{thm:multi-shottky-model}.
    
  To prove the injectivity of $\Psi$, suppose $(M,\mu)$ and
  $(M',\mu')$ are two multi-Tischler pairs such that the respective
  marked multi-Schottky maps $(f,Q)$ and $(f',Q')$ are isotopic. Note
  that in this case $Q=Q'$. Let $\Gamma_M$ and $\Gamma_{M'}$ be the
  multicurves constructed in
  \Cref{subsec:can-obstr-multi-schottky}. By
  \Cref{thm:can-obstr-multi-schottky}, $\Gamma_M$ and $\Gamma_{M'}$
  are the canonical obstructions for $(f,Q)$ and $(f',Q')$, and thus
  they must be isotopic rel.\ $Q$. Moreover, by the proof of this
  theorem, the ``anti-rational'' small maps of the respective
  canonical decompositions uniquely recover the Tischler components of
  $M$ and $M'$ up to isotopy rel.\ $Q$. Hence, there is a
  homeomorphism $\phi\in \Homeo^+_0(S^2,Q)$ such that
  $\phi(M)=M'$. Note that $g:=\phi^{-1} \circ f' \circ \phi$ is a
  multi-Schottky map associated with the multi-Tischler pair $(M,
  \phi^{-1} \circ \mu'\circ \phi)$. Moreover, $(g,Q)$ and $(f,Q)$ are
  isotopic, because $(f,Q)$ and $(f',Q')$ are isotopic by
  assumption. Now, since $\mu = f|_{M^\mu}$ and $(\phi^{-1} \circ
  \mu'\circ \phi)|_{M^\mu} = g|_{M^\mu}$ are self-homeomorphisms of
  $M^\mu$, we deduce that $\mu$ and $(\phi^{-1}\circ \mu' \circ
  \phi)|_{M^\mu}$ are isotopic (by lifting).
  It follows that the multi-Tischler pairs $(M,\mu)$ and $(M',\mu')$
  are isotopic, and thus the map $\Psi$ is injective.

  The respective statement for the equivalence classes is now a
  straightforward consequence using
  \Cref{def:Thurston-equiv,def:multi-schottky-equiv}.
\end{proof}


\section{Applications and Examples}
\label{sec:exampl-appl}

In this section, we present a few selected applications of our
combinatorial classification of critically fixed anti-rational maps.

\subsection{Pullbacks of curves and global curve attractors}
\label{subsec:glob-curve-attr}

Let $(f,Q)$ be a marked \antiThurston map. In the following
discussion, by a curve we will always mean a simple closed
(unoriented) curve in $S^2 \setminus Q$, and homotopy will as usual be
understood as free homotopy in $S^2 \setminus Q$.

A \emph{pullback} of a curve $\gamma$ under $f$ is a connected
component of $f^{-1}(\gamma)$. By homotopy lifting, homotopic curves
have homotopic pullbacks, which means that pullbacks are well-defined
for homotopy classes of curves.  With a slight abuse of notation, for
any set $\cA$ of homotopy classes of curves, we write $f^{-1}(\cA)$
for the set of all pullbacks of $[\gamma]\in \cA$ under $f$, and
similarly $f^{-n}(\cA)$ for the pullbacks under the $n$-th iterate
$f^n$.


A sequence $(\gamma_n)_{n\geq 0}$ of curves is a \emph{pullback
    orbit} of a curve $\gamma = \gamma_0$ if $\gamma_n$ is a pullback
  of $\gamma_{n-1}$ for each $n\geq 1$. A \emph{pullback orbit} of the
  homotopy class $[\gamma]$ is defined in a similar way. We say that
  the curve $\gamma$ (and its homotopy class $[\gamma]$) is
  \emph{periodic} if there is a periodic pullback orbit of
  $[\gamma]$.
  
\begin{figure}[t]
  \centering
  \begin{overpic}[width=.9\textwidth]{Graphs/pullback1.png}
    \put(22,10){$\xleftarrow{f}$} \put(50,10){$\simeq$}
    \put(77,10){$\xleftarrow{f}$} \put(0,0){\color{red} $\gamma_0$}
    \put(27,0){\color{red}{$\gamma_1^1$}} \put(58,0){\color{red}{$\gamma_1$}}
    \put(83,0){\color{red}{$\gamma_2^1$}}
  \end{overpic}
  \caption{Pulling back simple closed curves (in red) under a
      Schottky map $f$ associated to the tetrahedral Tischler graph
      $T$ (in black). Starting on the left with a minimal curve
      $\gamma_0$ of complexity $\gamma_0 \cdot T = 12$, the next
      picture shows its preimage $\gamma_1^1 = f^{-1}(\gamma_0)$,
      which is a simple closed curve with the same intersection points
      with $T$, but with $\gamma_1^1 \cdot T = 8$. The third picture
      shows a minimal curve $\gamma_1$ homotopic to $\gamma_1^1$, and
      the last picture shows the three components of its preimage
      $\gamma_2^1 \sqcup \gamma_2^2 \sqcup \gamma_2^3 =
      f^{-1}(\gamma_1)$, where $\gamma_2^1$ is minimal with
      $\gamma_2^1 \cdot T = 4$, and $\gamma_2^2$ and $\gamma_2^3$ are
      null-homotopic. The preimage $f^{-1}(\gamma_2^1)$ (not depicted)
      is a simple closed curve homotopic to $\gamma_2^1$.}
  \label{fig:pullback-1}
\end{figure}

\begin{definition}
  \label{def:fgca}
  Let $(f,Q)$ be a marked \antiThurston map. A \emph{finite global
    curve attractor} (or \emph{FGCA} for short) for $(f,Q)$ is a
  finite set $\cN$ of homotopy classes of curves with the following
  two properties:
  \begin{enumerate}
  \item \label{fgca:1} $f^{-1}(\cN) = \cN$; and
  \item \label{fgca:2} for every curve $\gamma$, there exists $n \geq
    1$ such that $f^{-n} \left(\{[\gamma]\}\right) \subseteq
    \cN$. (In particular, every pullback orbit of $[\gamma]$
    eventually lands in $\cN$).
  \end{enumerate}
\end{definition}

\begin{remarks}\mbox{}
  \begin{enumerate}
  \item Since there are only finitely many homotopy classes of
    non-essential curves, and since pullbacks of non-essential curves
    are always non-essential, one can restrict to essential curves in
    the definitions of pullbacks and of finite global curve attractors.
  \item Uniqueness of FGCAs is straightforward from the
    definition. In fact, the FGCA (if it exists) consists of all
      periodic homotopy classes of curves. 
    \item It is easy to see that in order to establish existence of
      FGCAs, it is enough to find a finite set $\cN$ of homotopy
      classes of curves satisfying \ref{fgca:2}.
  \end{enumerate}
\end{remarks}
The central problem on FGCAs is the question of existence,
specifically the following conjecture.
\begin{conjecture}[FGCA conjecture]
  \label{conj:fgca}
  Let $(f,Q)$ be a marked postcritically finite \antiRational map
  that is not a $(2,2,2,2)$-map. Then $(f,Q)$ has a finite global
  curve attractor.
\end{conjecture}

The study of FGCAs for rational maps was initiated by Pilgrim in
\cite{pilgrimAlgebraicFormulationThurston2012}, where he also
established the FGCA conjecture for certain quadratic
polynomials. Later, the conjecture was proved for all postcritically
finite polynomials in
\cite{belkRecognizingTopologicalPolynomials2022}, and for critically
fixed rational maps in
\cite{hlushchankaTischlerGraphsCritically2019}. See the recent survey
\cite{pilgrimPullbackRelationCurves2022} for more context and
background. In this subsection, we will discuss the FGCA conjecture in
the special setting of (marked) critically fixed anti-rational maps.

In the following, we will actually consider realizable
Schottky maps instead of critically fixed anti-rational maps,
purely for convenience. Since these are combinatorially equivalent
notions (\Cref{thm:schottky-obstructions,thm:tischler-fixed-correspondence}), and all the questions in
this subsection are invariant under combinatorial equivalence, it does
not substantially alter any of our results. We also note that even
though some of the subsequent auxiliary results are true for
obstructed marked Schottky maps, it is not too hard to see that
obstructed marked Schottky maps never have an FGCA. Indeed, one can
show that obstructed marked Schottky maps have infinitely many
periodic homotopy classes of curves (with period $2$); compare
\Cref{cor:periodic-arcs}.

Let $T=(Q,E)$ be a marked topological Tischler graph and $(f,Q)$ 
be an associated marked Schottky map. Recall that the
\emph{complexity} $\gamma \cdot T$ of a curve $\gamma$ is defined as
the minimal number of intersections with $T$ in the homotopy class
of $\gamma$, and that $\gamma$ is called \emph{minimal} if it
realizes this minimal intersection number. Recall also that
we are using the topological notion of transversality; in particular, we say that $\gamma$ and $T$ have transverse intersections whenever $\gamma\cap T$ is finite and they cross each other at every intersection point $p\in \gamma\cap T$ (see \Cref{sec:preliminaries}).

We will say that a curve $\gamma$ and an edge $e\in E$ \emph{form a
  bigon} if there is a Jordan domain $U\subset S^2\setminus Q$ (the
bigon) whose boundary is given by the disjoint union $\gamma'\bigsqcup
e'\bigsqcup \{z_1,z_2\}$, where $\gamma'$ and $e'$ are open subarcs of
$\gamma$ and $e$, respectively, connecting $z_1$ and $z_2$.
Furthermore, we will say that $\gamma$ \emph{forms a bigon with the graph} $T$ if $\gamma$
forms a bigon with at least one edge $e \in E$.
It is easy
to see that if a curve $\gamma$ is minimal, then  $\gamma$ has transverse intersections with $T$ and does not
form a bigon with $T$. (Otherwise, we can homotope $\gamma$ to strictly decrease the number of intersections
of $\gamma$ with $T$.) In fact, the converse is also
true: if a curve $\gamma$ has transverse intersections with $T$ and does not form a bigon with $T$, then $\gamma$ is minimal; compare \cite[Sec.~1.2.4 and
1.2.7]{farbPrimerMappingClass2012}.

\Cref{lem:complexity} shows that complexity of curves can never increase under taking
pullbacks. In particular, the set $\cC_m$ of the homotopy classes of curves of 
complexity at most $m$ satisfies $f^{-1}(\cC_m)
\subseteq \cC_m$. (Note also that the set $\cC_m$ is finite, because $T$ is connected.) We will say that a curve $\gamma$ is of \emph{stable
  complexity} (equal to $\gamma \cdot T$) if it has a pullback orbit 
$(\gamma_n)_{n\geq 0}$ with $\gamma_n \cdot T = \gamma \cdot T$ for all $n\geq 0$.  By
\Cref{lem:complexity}, this implies that each curve $\gamma_n$ is minimal
with $\gamma_n \cap T = \gamma \cap T$, and that all other pullbacks
of $\gamma$ under $f^n$ are null-homotopic. The
FGCA conjecture for (unmarked) critically fixed anti-rational maps is then equivalent to the following conjecture.

\begin{conjecture}[Decreasing complexity conjecture]
  \label{conj:decreasing-complexity}  
  Let $(f,Q)$ be a marked critically fixed anti-rational map with $Q=C_f$ and reduced Tischler graph
  $T_f$. Then there exists $N \in \N$ such that there are no curves of
  stable complexity (with respect to $T_f$) greater than $N$.
\end{conjecture}

\begin{remark}
The proof of the FGCA conjecture in the case of critically fixed
rational maps in \cite{hlushchankaTischlerGraphsCritically2019}
establishes a version of the above  conjecture, with the complexity of a curve $\gamma$ defined as
the intersection number $\gamma\cdot G_f$, where $G_f$ is a ``charge graph'' associated with a
given rational map $f$. In that setting, complexity strictly decreases under
taking pullbacks whenever the curve $\gamma$ has at least a double intersection with an
edge of $G_f$. In the orientation-reversing case, however, double
intersections with the edges of the reduced Tischler graph may persist under taking pullbacks,
as illustrated by the example in \Cref{fig:pullback-2}.
\end{remark}

\begin{figure}[t]
  \centering
  \begin{tikzpicture} [line width=.8pt]
    \def\r{1cm}
    
    \def\s{.15*\r}
    
    \tikzstyle{vertex}=[circle,fill=black,minimum size=\s,inner sep=0pt]
    \begin{scope}
      \foreach \name/\angle in {A/36, B/108, C/180, D/-108, E/-36} {
        \node[vertex] (\name) at (\angle:\r) {};
      }
      \draw circle (\r);
      \foreach \i/\j in {A/B, B/C, C/D, D/E} {
        \draw (\i) to [bend left=20] (\j);
      }
      
      \draw[red,
      ]  plot [closed hobby]
      coordinates { (-72:\r) (-108:1.3*\r) (-144:\r) (-144:.7*\r)
        (-72:.2*\r) (12:\r) (36:1.2*\r) (72:\r) (72:.7*\r) (144:.7*\r)
        (144:\r) (108:1.3*\r) (36:1.6*\r) (-12:\r) (-72:.7*\r) };
      \node at (144:\r) [red, above left] {$\gamma_1$};
    \end{scope}
    
    \begin{scope} [xshift=4*\r]
      \foreach \name/\angle in {A/36, B/108, C/180, D/-108, E/-36} {
        \node[vertex] (\name) at (\angle:\r) {};
      }
      \draw circle (\r);
      \foreach \i/\j in {A/B, B/C, C/D, D/E} {
        \draw (\i) to [bend left=20] (\j);
      }
      
      \draw[red,
      ]  plot [closed hobby]
      coordinates { (72:\r) (108:1.3*\r) (144:\r) (144:.7*\r)
        (72:.2*\r) (-12:\r) (-36:1.2*\r) (-72:\r) (-72:.7*\r) (-144:.7*\r)
        (-144:\r) (-108:1.3*\r) (-36:1.6*\r) (12:\r) (72:.7*\r) };
      \node at (144:\r) [red, above left] {$\gamma_2$};
  \end{scope}
  \end{tikzpicture}
  \caption{Example of an unobstructed reduced topological Tischler graph $T$ (in black) and a
    pair of minimal curves $\gamma_1$ and $\gamma_2$ (in red) satisfying the following two conditions: (i) both $\gamma_1$ and $\gamma_2$ have a double intersection with an edge of $T$; and (ii) $\gamma_1$ and $\gamma_2$ are pullbacks of each other (up to
    homotopy) under the corresponding (realizable)
    Schottky map $f_T$. In particular, the curves $\gamma_1$ and  $\gamma_2$ are periodic with period $2$.}
  \label{fig:pullback-2}
\end{figure}

We have the following partial result towards
\Cref{conj:decreasing-complexity}. (Recall that a face of $T$ is
called \emph{digonal} or \emph{triangular} if its simply connected and
its boundary cycle has length 2 or 3, respectively.)

\begin{proposition}
  \label{thm:gca-triangles}
  Let $T=(Q,E)$ be a marked topological Tischler graph and $(f,Q)$ be
  an associated marked Schottky map. If $\gamma$ is a minimal curve of
  stable complexity, then $\gamma$ passes through every digonal or
  triangular face $A$ of $T$ at most once, that is, $\gamma\cap A$ has
  at most one connected component.
\end{proposition}

Before proving \Cref{thm:gca-triangles}, let us first state and prove
a couple of auxiliary results.

\begin{lemma}
  \label{lem:no-repeats}
  Let $T=(Q,E)$ be a marked topological Tischler graph and $(f,Q)$ be
  an associated marked Schottky map. Suppose $\gamma$ is a minimal
  curve of stable complexity that passes through a face $A$ of $T$. If
  $z_1$ and $z_2$ are two consecutive intersections of $\gamma$ with
  $\partial A$ when following along $\gamma$, then $z_1$ and $z_2$ lie
  on two different edges of $T$.
\end{lemma}
In other words, if, while following the curve $\gamma$, we crossed an
edge $e\subset \partial A$ twice, then we must have crossed a
different edge $e'\subset \partial A$ in between.
\begin{proof}
  Suppose to the contrary that $z_1,z_2\in e$ for some edge $e\subset
  \partial A$, and let $\alpha$ be an open subarc of $\gamma$
  connecting $z_1$ and $z_2$ with $\alpha\cap \partial A =
  \emptyset$. Since $\gamma$ intersects the Jordan curve $\partial A$
  transversely, we have $z_1\neq z_2$, and either $\alpha \subset A$
  or $\alpha \subset S^2\setminus \overline{A}$.  In the former case,
  $\gamma$ and $e$ form a bigon, which contradicts the minimality of
  $\gamma$. In the latter case, there is a (unique) pullback $\gamma'$
  of $\gamma$ under $f$ with $z_1,z_2\in \gamma'$, as well as a
  (unique) preimage arc $\alpha'\subset \gamma'$ of $\alpha$ under $f$
  connecting $z_1$ and $z_2$ with $\alpha'\subset A$. It follows that
  $\gamma'$ and $e$ form a bigon, which implies that $\gamma'$ is not
  minimal, contradicting the assumption that the curve $\gamma$ is of stable
  complexity.
\end{proof}

\begin{figure}[t]
  \centering
  \begin{tikzpicture}[scale=1.8,
    every node/.style={inner sep=0pt}
    ]
    \def\r{.5pt}
    \def\s{3pt}
    \draw (-1.6,0) -- (0,0) node[label={[label
      distance=\s]above:$0$}](z){} -- (1.6,0);
    \fill (z) circle[radius=\r];
    \coordinate[label={[label distance=\s]260:$z_1$}] (z1) at (-1,0);
    \coordinate[label={[label distance=\s]-80:$z_2$}] (z2) at (1,0);
    \coordinate[label={[label distance=\s]260:$z_3$}] (z3) at (-.5,0);
    \coordinate[label={[label distance=\s]-80:$z_4$}] (z4) at (.5,0);
    \coordinate[label={[label distance=\s]260:$z_5$}] (z5) at (-.2,0);
    \draw[red] plot [hobby] coordinates { (-.95,-.2) (z1) (0,1) (z2) (0,-.7)
      (z3) (0,.5) (z4) (0,-.3) (z5) (-.1,.15) };
    \foreach \x in {z1,z2,z3,z4,z5} \fill[red] (\x) circle[radius=\r];
    \node (A) at (-1.3,.7) {$A$};
    \node (e1) at (-1.4,.1) {$e_1$};
    \node (e2) at (1.4,.1) {$e_2$};
    \node[red] (alpha1) at (.9,.8) {$\alpha_1$};
    \node[red] (alpha2) at (.9,-.7) {$\alpha_2$};
    \node[red] (alpha3) at (.3,.6) {$\alpha_3$};
  \end{tikzpicture}
  \hspace{1cm}
  \begin{tikzpicture}[scale=2.2,
    every node/.style={inner sep=0pt}
    ]
    \def\r{.5pt} \def\s{4pt} \draw (-1.5,0) -- (0,0)
    node[label={[label distance=\s]above:$0$}](a) 
    {} -- (1.2,0)
    node[label={[label distance=\s]above:$1$}](b){} -- (2.2,0);
    \foreach \x in {a,b} \fill (\x) circle[radius=\r];
    \coordinate[label={[label distance=\s]260:$z_1$}] (z1) at (-.5,0);
    \coordinate[label={[label distance=\s]-90:$z_2$}] (z2) at (.6,0);
    \coordinate[label={[label distance=\s]-80:$z_3$}] (z3) at (1.5,0);
    \coordinate[label={[label distance=\s]260:$z_4$}] (z4) at (-1,0);
    \coordinate[label={[label distance=\s]270:$z_5$}] (z5) at (.2,0);
    \draw[red] plot [hobby] coordinates { (-.5, -.2) (z1) (0,.4) (z2)
      (1.3,-.4) (z3) (0,.8) (z4) (-.5,-.5) (z5) (.7,.3) (1.1,.3)};
    \foreach \x in {z1,z2,z3,z4,z5} \fill[red] (\x) circle[radius=\r];
    \node (A) at (-1.3,.6) {$A$};
    \node (e1) at (-1.3,.1) {$e_1$};
    \node (e2) at (.8,.1) {$e_2$};
    \node (e3) at (2,.1) {$e_3$};
    \node[red] (alpha1) at (0,.5) {$\alpha_1$};
    \node[red] (alpha2) at (1.2,-.55) {$\alpha_2$};
    \node[red] (alpha3) at (1.4,.6) {$\alpha_3$};
    \node[red] (alpha4) at (-.6,-.6) {$\alpha_4$};
    \node[red] (alpha5) at (.7,.4) {$\alpha_5$};
  \end{tikzpicture}
  \caption{Illustrations of the arguments in the proofs of
    \Cref{lem:no-spiraling} (left) and \Cref{thm:gca-triangles}
    (right), both in the case where $\alpha_1 \subset A$.} 
  \label{fig:proof-illustrations}
\end{figure}

\begin{lemma}
  \label{lem:no-spiraling}
  Let $T=(Q,E)$ be a marked topological Tischler graph and $(f,Q)$ be
  an associated marked Schottky map. Suppose $\gamma$ is a minimal
  curve of stable complexity that passes through a face $A$ of $T$,
  and let $z_1,z_2,z_3$ be three consecutive intersections of $\gamma$
  with $\partial A$ when following along $\gamma$. If $z_1\in e_1$ and
  $z_2\in e_2$ for two adjacent edges $e_1,e_2$ of $T$, then either
  $z_3=z_1$ or $z_3\notin e_1\cup e_2$.
\end{lemma}

\begin{proof}
  By \Cref{lem:no-repeats}, we immediately have that $z_3 \notin
  e_2$. Suppose to the contrary that $z_3 \neq z_1$ and $z_3 \in
  e_1$. We may assume, for notational convenience, that $T$ is a plane
  graph in $\CC$, the face $A$ is the upper half-plane, $e_1 =
  (-2,0)$, and $e_2 = (0,2)$ (or $e_1 = (-\infty,0)$ and $e_2 =
  (0,\infty)$ when $A$ is a digonal face). By our assumptions, $z_3\in
  (z_1,0)$ or $z_1 \in (z_3,0)$. Up to changing the direction in which
  we are following the curve $\gamma$, we may assume without loss of
  generality that $z_3 \in (z_1,0)\subset e_1$.
  
  Let $(z_n)_{n\geq 1}$ be the sequence of consecutive intersections
  of $\gamma$ with $\partial A$ when following along $\gamma$ (in the
  chosen direction). For each $n\geq 1$, we denote by $\alpha_n$ the
  open subarc of $\gamma$ connecting $z_n$ and $z_{n+1}$ with $z_{n+2}
  \notin \alpha_n$. Since $\gamma$ has transverse intersections with
  $\partial A$, we have that $\alpha_3$ is contained in the Jordan
  domain in $\C$ bounded by $\alpha_1 \cup [z_1,z_2]$. Since $z_4\not
  \notin e_1$ (by \Cref{lem:no-repeats}), we thus get $z_4 \in
  (0,z_2)\subset e_2$. Repeating this argument, we conclude that $z_5
  \in (z_3,0)$, $z_6 \in (0,z_4)$, etc.; see the left hand side of
  \Cref{fig:proof-illustrations} for an illustration. This shows that
  $\gamma$ has an infinite number of intersections with $\partial A
  \subset T$, contradicting the minimality of $\gamma$.
\end{proof}

Now the proof of \Cref{thm:gca-triangles} is a simple consequence.

\begin{proof}[Proof of \Cref{thm:gca-triangles}]
  The proof in the digonal case follows immediately from
  Lemmas~\ref{lem:no-repeats} and \ref{lem:no-spiraling}, so let us
  suppose that $A$ is a triangular face.  We may again assume that $T$
  is a plane graph in $\CC$, the face $A$ is the upper half-plane, and
  that the three edges in $\partial A$ are $e_1 = (-\infty,0)$, $e_2 =
  (0,1)$, and $e_3 = (1,\infty)$.

  Suppose to the contrary that $\gamma$ is a minimal curve of stable
  complexity that passes through $A$ at least twice, so that
  $\gamma\cap A$ has at least two connected components. As before, let
  $(z_n)_{n\geq 1}$ be the sequence of consecutive intersections of
  $\gamma$ with $\partial A$ when following along $\gamma$, and denote
  by $\alpha_n$ (for $n\geq 1$) the open subarc of $\gamma$ connecting
  $z_n$ and $z_{n+1}$ with $z_{n+2} \notin \alpha_n$. By
  \Cref{lem:no-repeats}, we may assume that $z_1 \in e_1$ and $z_2 \in
  e_2$. Using Lemmas~\ref{lem:no-repeats} and \ref{lem:no-spiraling}
  together with the assumptions on $\gamma$, we conclude that $z_3 \in
  e_3$, $z_4 \in e_1\setminus\{z_1\}$, $z_5 \in e_2\setminus\{z_2\}$,
  and $z_6 \in e_3\setminus\{z_3\}$. Assuming that the subarc
  $\alpha_1$ of $\gamma$ connecting $z_1$ and $z_2$ is contained in
  $A$, we get that $z_4 < z_1 < 0 < z_5 < z_2 < 1 < z_3$; see the
  right hand side of \Cref{fig:proof-illustrations} for an
  illustration. However, the subarc $\alpha_5$ connecting $z_5$ and
  $z_6$ is then also contained in $A$ and must intersect $\alpha_1$,
  contradicting the assumption that $\gamma$ is a simple closed
  curve. The proof in the case where $\alpha_1 \subset S^2 \setminus
  \overline{A}$ is completely analogous.
\end{proof}

Finally, let us provide a simple result about certain curves that
are always periodic (with period $1$), and hence are of stable complexity
and contained in the FGCA of the respective map (if the FGCA exists).

\begin{lemma}
  \label{lem:fixed-curves}
  Let $T=(Q,E)$ be a marked topological Tischler graph and $(f,Q)$ be
  an associated marked Schottky map. Suppose $\gamma$ is a minimal
  curve with $\gamma\cdot T > 0$ that passes through every face of $T$ at most once. Then $\gamma$ has a
  pullback $\gamma'$ that is homotopic to $\gamma$, while all other pullbacks are null-homotopic.
\end{lemma}

Note that, since every face of $T$ is a Jordan domain, we have that a curve $\delta$ is null-homotopic if and only if $\delta\cdot T=0$. Furthermore, if $\delta \cdot T > 0$, then we must have $\delta \cdot T \geq 2$.

\begin{proof}
  Let $A_1, \dots, A_n$ be all the faces of $T$ intersecting with $\gamma$. By assumption, $\alpha_k :=
  \gamma \cap A_k$ and $\beta_k := \gamma \setminus \overline{A_k}$ are open subarcs of $\gamma$ for
  each $k=1,\ldots,n$. Moreover, we have $n\geq 2$, and (up to relabeling of the faces) $\alpha_k$ and $\beta_k$ connect $z_{k-1},z_k\in \partial A_k$ with $z_n =
  z_0$. By the definition of Schottky
  maps, $\alpha_k' := f^{-1}(\gamma) \cap A_k = f^{-1}(\beta_k) \cap
  A_k$ is also an open arc in $A_k$ connecting $z_{k-1}$ to $z_k$. Since
  $A_k$ is a Jordan domain, $\alpha_k'$ is homotopic to $\alpha_k$ in $\overline{A_k}$
  (with the endpoints $z_{k-1}$ and $z_k$ fixed), and thus the
  concatenation $\gamma'$ of all the arcs $\alpha_k'$ (with the points
  $z_k$ included) is a simple closed curve in $S^2\setminus Q$ that is homotopic to
  $\gamma$. Furthermore, the fact that $\gamma'$ is a simple closed
  curve implies that it is a connected component of
  $f^{-1}(\gamma)$, i.e., a pullback of $\gamma$, as claimed. The final assertion of the lemma now follows from \Cref{lem:complexity} (or directly from the definition of Schottky maps).
\end{proof}

The lemma above combined with the previous discussion immediately implies the following corollary.

\begin{corollary}
  \label{cor:gca-triangles}
  Let $T=(Q,E)$ be a marked topological Tischler graph and $(f,Q)$ be
  an associated marked Schottky map. If every edge of $T$ is on the
  boundary of a digonal or triangular face, then $(f,Q)$ has a finite
  global curve attractor. Moreover, if every face of $T$ is digonal or triangular, then the FGCA is given by the homotopy classes of curves that pass through
  every face of $T$ at most once.
\end{corollary}
\begin{remark}
  Note that the assumptions in the corollary imply that the marked
  topological Tischler graph $T$ is unobstructed, so that $(f,Q)$ is
  equivalent to a marked critically fixed anti-rational
  map. Consequently, we have shown the following partial case of the
  FGCA conjecture: if $g$ is a critically fixed anti-rational map such
  that every edge of its reduced Tischler graph $T_g$ is on the
  boundary of a digonal or triangular face, then $(g,C_g)$ has an
  FGCA.  
\end{remark}

\subsection{Twisting problem}
\label{subsec:twisting}

In this subsection, we will show how our classification enables us to
solve certain instances of the \emph{twisting problem} for critically
fixed anti-Thurston maps. Our discussion here parallels the one for
the orientation-preserving case in
\cite{hlushchankaCriticallyFixedThurston2022}, where one can find some
more background and references.

The general setup is as follows. Let
$(f,Q)$ be a marked \antiThurston map and $\phi\in \Homeo^+(S^2,Q)$. Then the \emph{twist of $(f,Q)$ by $\phi$} (or just the \emph{twisted map} if the context is unambiguous) is the marked \antiThurston map
$(f_\phi,Q)$, where $f_\phi = \phi \circ f$. Note that $f_\phi$ has the same dynamics on $Q$ as $f$; in particular, $f_\phi$ is an \antiThurston map with  $P_{f_\phi}=P_f$. The \emph{twisting problem} is
the problem of finding the combinatorial  equivalence class of the
twisted map $(f_\phi,Q)$, given the (isotopy) data of $(f,Q)$ and $\phi$. (Originally, the twisting problem was investigated for unmarked maps, i.e.,
with $Q=P_f$, but the setup is exactly the same for marked maps.)

In this subsection, we study the twisting problem in the setting of marked Schottky maps. More specifically, given a marked
topological Tischler graph $T = (Q,E)$ and an associated marked Schottky map $(f,Q)$, our focus is to determine conditions under which the twist of $(f,Q)$ by $\phi\in \Homeo^+(S^2,Q)$ is again a marked Schottky map (up to isotopy), and to identify the respective marked topological Tischler graph $T_\phi$. In fact, we will show below that this is the case when $\phi$ is a \emph{Dehn twist} (or, more generally, a power of a Dehn twist) about an essential simple closed curve $\gamma\subset S^2\setminus Q$ that passes through every face of $T$ at most once. We begin by recalling the definition of a Dehn twist and afterwards introducing a ``rotation operation'' on the given graph $T$, which will be used to describe the sought-for graph $T_\phi$.

Suppose $\gamma$ is an arbitrary simple closed curve in $S^2\setminus Q$. We can choose coordinates on $S^2$ (that is, identify $S^2$ with $\CC$ by an orientation-preserving homeomorphism) so that $\gamma$ is the circle $\{z: |z|=e^\pi\}$ and so that the closed annulus $R:=\{z: 1\leq |z|\leq e^{2\pi}\}$ satisfies $R\cap Q = \emptyset$. (Note that $\gamma$ is a core curve of $R$.) A \emph{Dehn twist about $\gamma$} is the map $\phi_\gamma: S^2 \to S^2$ defined by
\begin{equation}\label{eq:Dehn-twist}
  \phi_\gamma(z) =
  \begin{cases}
    z\cdot e^{ix} & \text{ if } z=e^{x+iy} \in R, \\
    z & \text{ otherwise}.
  \end{cases}
\end{equation}
It is immediate that $\phi_\gamma\in \Homeo^+(S^2,Q)$, and one can check that $\phi_\gamma\in \Homeo^+_0(S^2,Q)$ if and only if $\gamma$ is non-essential. We also note that, up to isotopy rel.\ $Q$, the Dehn twist $\phi_\gamma$ depends only on the free homotopy class of $\gamma$ in $S^2\setminus Q$ and is independent of the choice of coordinates on $S^2$. Furthermore, it is known that the \emph{pure mapping class group} of $(S^2,Q)$, defined as the quotient group $\Homeo^+(S,Q)/\Homeo^+_0(S^2,Q)$, is generated by finitely many Dehn twists; see \cite[Chs.~3 and 4]{farbPrimerMappingClass2012} for general background.

From now on, we restrict ourselves to the case where the curve $\gamma$ is essential, minimal with respect to the given marked topological Tischler graph $T=(Q,E)$, and also passes through every face of $T$ at most
once. By modifying the chosen coordinates on $S^2$, we may further assume
that $T \cap R$ consists of $m\geq 2$ radial segments $e_k := \{ \exp(x + i\theta_k) : 0 \le x \le 2\pi \}$ with
  arguments $\theta_k := \frac{2\pi k}{m}$ for $k=0,1,\ldots,m-1$. Note that, by our  standing assumptions on $\gamma$, the segments $e_k$ subdivide the annulus $R$ into $m$ sectors, each of which belongs to a separate face of $T$.

For an arbitrary $n \in \Z$, we introduce the \emph{$n/m$-twisted radial segments}
\[
  e_{k,n} := \left\{ \exp\left[ x + i \left(\theta_k + \frac{n}{m}x
      \right) \right] :  0 \le x \le 2\pi \right\},
\]
where $k=0,1,\ldots,m-1$. Note that each $e_{k,n}$ is a closed
arc in $R$ connecting the point with argument $\theta_k$ on the inner boundary of $R$ to the point with argument $\theta_{k+n}$ on the outer boundary of $R$ (with the convention that indices are understood modulo $m$). Furthermore, for every fixed $n$, the $n/m$-twisted radial segments $e_{0,n}, e_{1,n}, \ldots, e_{m-1,n}$ are pairwise disjoint. Since the endpoints of these segments are the same
as those of the radial segments  $e_{0}, e_{1}, \ldots, e_{m-1}$, only connected in a different way, we
can replace the radial segments $e_k$ in $T$ by the $n/m$-twisted
radial segments $e_{k,n}$ to obtain a new plane graph $T' =
(Q,E')$. Following \cite{hlushchankaCriticallyFixedThurston2022}, we
call $T'$ the \emph{$n$-rotation} of the graph $T$ about the curve
$\gamma$; see \Cref{fig:graph-rotation} for an illustration. 

Similar to Dehn twists, the $n$-rotation $T'$ is well-defined up to isotopy, i.e., it is independent of the choice of coordinates on $S^2$. Using the fact that $\gamma$ passes through every face of $T$ at most once, it is an easy exercise to show that $T'$ is again a
marked topological Tischler graph. We also note that when $n=m\cdot p$ for $p\in \Z$, we have $T'=\phi_\gamma^{p}(T)$, where $\phi_\gamma$ is the Dehn twist about $\gamma$ given by \eqref{eq:Dehn-twist}. In particular, in this case the $n$-rotation $T'$ is isomorphic to the original graph $T$.

\begin{figure}[t]
  \centering
  \begin{tikzpicture}[scale=.8]
    \def\r{1cm}

    \def\s{1.8}

    \def\g{1.3}    

    \def\vi{.5}
    \def\ve{2.2}

    \def\vm{2.6}

    \begin{scope}

    \path [fill=blue!20](0,0) circle (\r*\s);    
    \path [fill=white](0,0) circle (\r);  

      \foreach \name/\angle in {A/0, B/90, C/180, D/270} {
        \coordinate (\name1) at (\angle:\r);
        \coordinate (\name2) at (\angle:\r*\s);
      }
      
      \coordinate (V1) at (45:\vi*\r);
      \coordinate (V2) at (225:\vi*\r);
      \coordinate (V3) at (135:\ve*\r);
      \coordinate (V4) at (315:\ve*\r);
      
      \coordinate (W1) at (120:\vm*\r);
      \coordinate (W2) at (45: \vm*\r);
      \coordinate (W3) at (330:\vm*\r);
      
       \draw[red] circle (\r*\g);
      
      \foreach \name in {A,B,C,D} { \draw (\name1) -- (\name2); }
      
      \draw (V1) -- (V2);
      
      \draw plot [hobby] coordinates {(A1) (V1) (B1)};
      \draw plot [hobby] coordinates {(C1) (V2) (D1)};
      
      \draw plot [hobby] coordinates {(V3) (W1) (W2) (W3) (V4)};
      
      \draw plot [hobby] coordinates {(A2) (V4) (D2)};
      \draw plot [hobby] coordinates {(B2) (V3) (C2)};
      
      \foreach \name in {V1,V2,V3,V4} {\fill (\name) circle (2pt);}
    \end{scope}

    \begin{scope} [xshift=6*\r]

    \path [fill=blue!20](0,0) circle (\r*\s);    
    \path [fill=white](0,0) circle (\r);

      \foreach \name/\angle in {A/0, B/90, C/180, D/270} {
        \coordinate (\name1) at (\angle:\r);
        \coordinate (\name2) at (\angle:\r*\s);
      }
      
      \coordinate (V1) at (45:\vi*\r);
      \coordinate (V2) at (225:\vi*\r);
      \coordinate (V3) at (135:\ve*\r);
      \coordinate (V4) at (315:\ve*\r);
      
      \coordinate (W1) at (120:\vm*\r);
      \coordinate (W2) at (45: \vm*\r);
      \coordinate (W3) at (330:\vm*\r);
      
      
      \draw (V1) -- (V2);
      

      \foreach \angle in {0,90,180,270} {
        \draw[domain=\angle:90+\angle] plot (\x :
        {\r*exp((\x-\angle)/90*ln(\s))}); }
      
      \draw plot [hobby] coordinates {(A1) (V1) (B1)};
      \draw plot [hobby] coordinates {(C1) (V2) (D1)};
      
      \draw plot [hobby] coordinates {(V3) (W1) (W2) (W3) (V4)};
      
      \draw plot [hobby] coordinates {(A2) (V4) (D2)};
      \draw plot [hobby] coordinates {(B2) (V3) (C2)};
      
      \foreach \name in {V1,V2,V3,V4} {\fill (\name) circle (2pt);}
    \end{scope}

    \begin{scope}[xshift=12cm]

    \path [fill=blue!20](0,0) circle (\r*\s);    
    \path [fill=white](0,0) circle (\r);

      \foreach \name/\angle in {A/0, B/90, C/180, D/270} {
        \coordinate (\name1) at (\angle:\r);
        \coordinate (\name2) at (\angle:\r*\s);
      }
      
      \coordinate (V1) at (45:\vi*\r);
      \coordinate (V2) at (225:\vi*\r);
      \coordinate (V3) at (135:\ve*\r);
      \coordinate (V4) at (315:\ve*\r);
      
      \coordinate (W1) at (120:\vm*\r);
      \coordinate (W2) at (45: \vm*\r);
      \coordinate (W3) at (330:\vm*\r);
      

      \draw (V1) -- (V2);

     \foreach \angle in {0,90,180,270} {
        \draw[domain=\angle:180+\angle] plot (\x :
        {\r*exp((\x-\angle)/180*ln(\s))}); }
      
      \draw plot [hobby] coordinates {(A1) (V1) (B1)};
      \draw plot [hobby] coordinates {(C1) (V2) (D1)};
      
      \draw plot [hobby] coordinates {(V3) (W1) (W2) (W3) (V4)};
      
      \draw plot [hobby] coordinates {(A2) (V4) (D2)};
      \draw plot [hobby] coordinates {(B2) (V3) (C2)};
      
      \foreach \name in {V1,V2,V3,V4} {\fill (\name) circle (2pt);}
    \end{scope}

  \end{tikzpicture}
  \caption{Illustration of graph rotation. The left picture shows the original marked topological Tischler graph $T=(Q,E)$ (in black) and an 
  essential simple closed curve $\gamma\subset S^2\setminus Q$ (in red).  The middle and right picture illustrate the $1$-rotation and $2$-rotation of $T$ about $\gamma$, respectively. The closed annulus $R\supset \gamma$ used in  the construction is depicted in 
  blue and the resulting graphs are in black. Note that in this example the $1$-rotation of the unobstructed graph $T$
  becomes obstructed. Moreover, the $2$-rotation of $T$ is isomorphic to the original graph.}
  \label{fig:graph-rotation}
\end{figure}

With this preparation, we can now provide an answer to a special instance of the twisting problem for $(f,Q)$. 

\begin{theorem}
  \label{thm:twisting}
  Let $T=(Q,E)$ be a marked topological Tischler graph and $(f,Q)$ be an
  associated marked Schottky map. Suppose $\gamma$ is an essential simple closed curve in $S^2 \setminus Q$ that is minimal with respect to $T$ and that passes through
  each face of $T$ at most once. Then for all $n\in \Z$, the twist of  $(f, Q)$ by the $n$-th power of a Dehn twist $\phi_\gamma$ about $\gamma$ is isotopic to a marked Schottky map associated with the $n$-rotation of $T$ about $\gamma$. 

  Furthermore, the twisted maps $(\phi^{n_1}_\gamma\circ f, Q)$ and $(\phi^{n_2}_\gamma\circ f, Q)$ are combinatorially equivalent whenever $n_1$ equals $n_2$ modulo $m$.
\end{theorem}

\begin{proof}
  Below, we will follow the setup and notation used in the definition of the $n$-rotation $T'$ of $T$ about~$\gamma$; in particular, we assume that we have chosen appropriate coordinates on $S^2$.

  To simplify the argument, we would like the given Schottky map $f$ to have a simple standard form in the annulus $R$. (We have the flexibility to do so since the isotopy class of the twisted map $(\phi^n_\gamma\circ  f, Q)$ depends only on the isotopy class of $(f,Q)$.) To this end, we first define $f$ in $R$ by
\begin{equation}
    \label{eq:annulus-schottky}
    f(\exp(x+iy)) = \exp(x -i(m-1)y) \qquad \text{for }
  0 \le x \le 2\pi \text{ and } y \in \R.
\end{equation}
  In other words, the map $f|_{R}$ keeps the magnitude of each $z\in R$ fixed and multiplies the argument by $-(m-1)$. In
  particular, $f|_{R}$ fixes pointwise all the radial segments $e_0,e_1,\dots,e_{m-1}$. Furthermore, for each $k=0,1,\dots, m-1$, the sector $$R_k:=\{\exp(x+iy): 0 \le x \le 2\pi,\,  \theta_k\le y \le \theta_{k+1}\}$$ in $R$ between $e_k$ and $e_{k+1}$ is mapped homeomorphically onto $R\setminus \overline{R_k}$. (Here and below, all indices are understood modulo $m$.) It is now straightforward to extend $f$ to the whole sphere, so that it defines a Schottky map associated to $T$. (This easily follows from \Cref{thm:alexander}\ref{item:alexander_1} and the fact that, for every face $A$ of $T$, the set $A\setminus R$ has two connected components, each being a Jordan domain.)

  Suppose the Dehn twist $\phi_\gamma$ about $\gamma$ is given by \eqref{eq:Dehn-twist}. With our special form of $f$ in $R$, we then have the following claim.

  \begin{claim*}
      For all $n\in\Z$, the map $\phi_\gamma^n\circ f$ sends each edge of the $n$-rotation $T'$ of $T$ about~$\gamma$ homeomorphically onto itself. 
  \end{claim*}

  Indeed, since $f|_T = \id_T$ and $\phi_\gamma|_{S^2\setminus R} = \id_{S^2\setminus R}$, it is enough to check that $\phi_\gamma^n\circ f$ maps  each of the $n/m$-twisted radial segments $e_{0,n}, e_{1,n},\dots,e_{m-1,n}$ homeomorphically onto itself. Using \eqref{eq:Dehn-twist} and \eqref{eq:annulus-schottky}, we have that $\phi_\gamma$ and $f$ map twisted radial segments homeomorphically onto twisted radial segments as follows:
\begin{equation*}
  \phi_\gamma(e_{k,n}) = e_{k,n+m}  \quad \text{and} \quad 
  f(e_{k,n}) = e_{k, -n(m-1)},
\end{equation*}
  where $n\in \Z$ and $k=0,1,\dots,m-1$ are arbitrary. This immediately implies that $\phi_\gamma^n\circ f$ maps each $e_{k,n}$ homeomorphically onto itself. (In fact, with our normalizations, $\phi_\gamma^n\circ f$ fixes each $e_{k,n}$ pointwise.)

  The first statement of the theorem now follows from \Cref{prop:fixed-graph-implies-schottky}; conditions \ref{sch-crit-1}-\ref{sch-crit-2} in the lemma are implied by $\phi_\gamma\in \Homeo^+(S^2,Q)$, and conditions \ref{sch-crit-3}-\ref{sch-crit-5} follow from the claim above. To conclude the second statement from the first one, we apply \Cref{prop:equivalent-tischler-graphs}\ref{item:schottky-4} and note that the $n_1$-rotation and $n_2$-rotation of $T$ about $\gamma$ are isomorphic whenever $n_1-n_2 = m\cdot p$ for $p\in\Z$ (as they differ by the $p$-th iterate of the Dehn twist about $\gamma$). This finishes the proof of the theorem.
\end{proof}

\subsection{Census of critically fixed anti-rational maps of low
  degrees}
\label{subsec:census}

We start by introducing some terminology and conventions. To each
critically fixed anti-rational map of degree $d \ge 2$ we associate
its \emph{branching data} $(m_1, \ldots, m_r)$, where $m_1 \ge \ldots
\ge m_r$ are the multiplicities of its distinct critical points $c_1,
\ldots, c_r$. We then have $1 \le m_k \le d-1$ for each $k=1,\dots,m$,
and $\sum_{k=1}^r m_k = 2d-2$. We will refer to $d$ as the
\emph{mapping degree} to distinguish from the vertex degrees of the
corresponding (topological) Tischler graphs or trees, as well as from
the local degrees at critical points. Note that a critically fixed
anti-rational map is Möbius conjugate to an anti-polynomial if and
only if $m_1 = d-1$.


Throughout this subsection, all classifications and statements of
uniqueness are up to isomorphism for reduced topological Tischler
graphs (or trees) and are up to Möbius conjugacy for critically
fixed anti-rational maps (or up to affine conjugacy, in the case of
anti-polynomials). Moreover, reduced topological Tischler graphs and
trees are drawn to realize all possible symmetries. In particular, if
possible, they are drawn $\R$-symmetrically, which means that the
corresponding anti-rational or anti-polynomial map $f$ can be chosen
to have real coefficients. If, furthermore, all critical points of $f$
can be realized on the real line, the critical vertices of the
respective reduced topological Tischler graph or tree $T$ are arranged
on the real line as well. In that case, the
map $f$ is the complex conjugate of a critically fixed rational or
polynomial map. (Recall that a vertex of $T$ is called
\emph{critical} if it has degree $\geq 3$. Note that every vertex of a
reduced topological Tischler graph is critical, and so is every
non-leaf vertex of a reduced topological Tischler tree.)

\subsubsection{Critically fixed anti-polynomials}
\label{sec:crit-fixed-anti}

Instead of organizing our census of critically fixed anti-polynomials
by the mapping degree $d\geq 2$, it is easier to organize it by the
number $r$ of distinct finite critical points, which is also the
number of critical vertices in the corresponding reduced (topological)
Tischler trees, as well as the total number of vertices in the
corresponding Hubbard trees.  
We are going to omit the critical point at $\infty$ while writing the
respective branching data sequences, so these will be given by $(m_1, \ldots,
m_r)$, where $m_1 \ge m_2 \ge \ldots \ge m_r$ are the multiplicities
of the finite critical points $c_1, \ldots, c_r$ with $\sum_{k=1}^r
m_k= d-1$. 

For $r \le 2$ it is not hard to list all possible critically fixed
anti-polynomials. For $r\ge 3$ we instead list all possible reduced
topological Tischler trees, with some remarks on the corresponding
anti-polynomials. A few of these anti-polynomials have been explicitly
calculated using ad-hoc methods and computer algebra in
\cite{bshoutyCrofootSarasonConjectureHarmonic2004}, but with our
combinatorial description it should be easier to systematically find
them numerically.


In our sketches of reduced topological Tischler trees, critical
vertices are represented by thick black dots, and leaves are
evenly distributed on a circle shown in dashed lines. This reflects
the fact that the leaves of of these trees correspond to the landing points of
the fixed internal rays of the immediate basin of
$\infty$. Additionally, the edges of the Hubbard tree (as a subtree of the Tischler tree) are drawn slightly
thicker for emphasis. Note that we will be working with the notion of
\emph{angled} Hubbard trees introduced in \cite{poirierHubbardTrees2010},
which are (regular) Hubbard trees together with the extra data about the \emph{turn angles} between any two edges
incident with the same critical vertex. (Such a turn angle coincides with the angle
between the corresponding internal rays landing at the respective critical point.)

\textbf{Case} $\bm{r=1}$. In this case, the only possible branching
data is $(d-1)$, and the unique corresponding critically fixed
anti-polynomial is $f(z) = \bar{z}^d$.

\begin{figure}[ht]
  \centering
  \begin{tikzpicture}
    \begin{scope}
      \coordinate (A) at (0,0);
      \coordinate (B) at (0:1);
      \coordinate (C) at (120:1);
      \coordinate (D) at (240:1);
      \draw (A) -- (B);
      \draw (A) -- (C);
      \draw (A) -- (D);
      \fill (A) circle [radius=1mm];
      \draw[dashed] (0,0) circle [radius=1];
      \node at (0,-1.4) {$(1)$};
    \end{scope}
    \begin{scope}[xshift=3cm]
      \coordinate (A) at (0,0);
      \coordinate (B) at (0:1);
      \coordinate (C) at (90:1);
      \coordinate (D) at (180:1);
      \coordinate (E) at (270:1);
      \draw (A) -- (B);
      \draw (A) -- (C);
      \draw (A) -- (D);
      \draw (A) -- (E);
      \fill (A) circle [radius=1mm];
      \draw[dashed] (0,0) circle [radius=1];
      \node at (0,-1.4) {$(2)$};
    \end{scope}
    \begin{scope}[xshift=6cm]
      \coordinate (A) at (0,0);
      \coordinate (B) at (0:1);
      \coordinate (C) at (72:1);
      \coordinate (D) at (144:1);
      \coordinate (E) at (-72:1);
      \coordinate (F) at (-144:1);
      \draw (A) -- (B);
      \draw (A) -- (C);
      \draw (A) -- (D);
      \draw (A) -- (E);
      \draw (A) -- (F);
      \fill (A) circle [radius=1mm];
      \draw[dashed] (0,0) circle [radius=1];
      \node at (0,-1.4) {$(3)$};
    \end{scope}
    \begin{scope}[xshift=9cm]
      \coordinate (A) at (0,0);
      \coordinate (B) at (0:1);
      \coordinate (C) at (60:1);
      \coordinate (D) at (120:1);
      \coordinate (E) at (180:1);
      \coordinate (F) at (240:1);
      \coordinate (G) at (300:1);
      \draw (A) -- (B);
      \draw (A) -- (C);
      \draw (A) -- (D);
      \draw (A) -- (E);
      \draw (A) -- (F);
      \draw (A) -- (G);
      \fill (A) circle [radius=1mm];
      \draw[dashed] (0,0) circle [radius=1];
      \node at (0,-1.4) {$(4)$};      
    \end{scope}
  \end{tikzpicture}
  \caption{Reduced topological Tischler trees with one critical vertex
    up to mapping degree~5.}
\end{figure}

\textbf{Case} $\bm{r=2}$.
We can always normalize such maps to have critical points $c_1=0$ and
$c_2=1$ with branching data $(m_1, m_2)$, where 
$d= m_1 + m_2 +1$.  For every such pair of multiplicities there is a
unique associated reduced topological Tischler tree, with vertex degrees $m_1 + 2$ at
$c_1$ and $m_2 + 2$ at $c_2$, and with a unique edge connecting the critical vertices $c_1$ and $c_2$. The corresponding anti-polynomials are the complex
conjugates of the critically fixed polynomials $g_{m_1, m_2}$ with the
same critical points and branching data, given by the regularized beta
function:
\begin{equation*}
  g_{m_1,m_2}(z) = \frac{B(z; \, m_1+1,m_2+1)}{B(m_1+1,m_2+1)} =
  \frac{(m_1+m_2+1)!}{m_1!\, m_2!} \int_0^z \zeta^{m_1} (1-\zeta)^{m_2}
  \, d\zeta.
\end{equation*}
Below, we explicitly list all such polynomials up to degree $5$:
\begin{align*}
  g_{1,1}(z) & = 3z^2-2z^3;   & g_{2,1}(z) & = 4z^3-3z^4; \\
  g_{3,1}(z) & = 5z^4 - 4z^5; & g_{2,2}(z) & = 10z^3 - 15z^4 + 6z^5.
\end{align*}

\begin{figure}[ht]
  \centering
  \begin{tikzpicture}
    \begin{scope}
      \coordinate (A1) at (-.4,0);
      \coordinate (A2) at (.4,0);
      \coordinate (B) at (45:1);
      \coordinate (C) at (-45:1);
      \coordinate (D) at (135:1);
      \coordinate (E) at (-135:1);
      \draw[very thick] (A1) -- (A2);
      \draw (A2) -- (B);
      \draw (A2) -- (C);
      \draw (A1) -- (D);
      \draw (A1) -- (E);
      \fill (A1) circle [radius=1mm];
      \fill (A2) circle [radius=1mm];
      \draw[dashed] (0,0) circle [radius=1];
      \node at (0,-1.4) {$(1,1)$};
    \end{scope}
    \begin{scope}[xshift=3cm]
      \coordinate (A1) at (-0.2,0);
      \coordinate (A2) at (0.5,0);
      \coordinate (B) at (36:1);
      \coordinate (C) at (108:1);
      \coordinate (D) at (180:1);
      \coordinate (E) at (-108:1);
      \coordinate (F) at (-36:1);
      \draw[very thick] (A1) -- (A2);
      \draw (A2) -- (B);
      \draw (A2) -- (F);
      \draw (A1) -- (C);
      \draw (A1) -- (D);
      \draw (A1) -- (E);
      \fill (A1) circle [radius=1mm];
      \fill (A2) circle [radius=1mm];
      \draw[dashed] (0,0) circle [radius=1];
      \node at (0,-1.4) {$(2,1)$};      
    \end{scope}
    \begin{scope}[xshift=6cm]
      \coordinate (A1) at (0.5,0);
      \coordinate (A2) at (-0.2,0);
      \coordinate (B) at (30:1);
      \coordinate (C) at (-30:1);
      \coordinate (D) at (90:1);
      \coordinate (E) at (-90:1);
      \coordinate (F) at (150:1);
      \coordinate (G) at (-150:1);
      \draw[very thick] (A1) -- (A2);
      \draw (A1) -- (B);
      \draw (A1) -- (C);
      \draw (A2) -- (D);
      \draw (A2) -- (E);
      \draw (A2) -- (F);
      \draw (A2) -- (G);
      \fill (A1) circle [radius=1mm];
      \fill (A2) circle [radius=1mm];
      \draw[dashed] (0,0) circle [radius=1];
      \node at (0,-1.4) {$(3,1)$};      
    \end{scope}
    \begin{scope}[xshift=9cm]
      \coordinate (A1) at (0.4,0);
      \coordinate (A2) at (-0.4,0);
      \coordinate (B) at (0:1);
      \coordinate (C) at (60:1);
      \coordinate (D) at (120:1);
      \coordinate (E) at (180:1);
      \coordinate (F) at (240:1);
      \coordinate (G) at (300:1);
      \draw[very thick] (A1) -- (A2);
      \draw (A1) -- (B);
      \draw (A1) -- (C);
      \draw (A1) -- (G);
      \draw (A2) -- (D);
      \draw (A2) -- (E);
      \draw (A2) -- (F);
      \fill (A1) circle [radius=1mm];
      \fill (A2) circle [radius=1mm];
      \draw[dashed] (0,0) circle [radius=1];
      \node at (0,-1.4) {$(2,2)$};      
    \end{scope}
  \end{tikzpicture}
  \caption{Reduced topological Tischler trees with two critical
    vertices up to mapping degree 5.}
\end{figure}

\textbf{Case} $\bm{r=3}$.
In this case, we have branching data $(m_1,m_2,m_3)$ with
$d=m_1+m_2+m_3+1$. There is only one possible Hubbard tree as a subset
of the plane, with one vertex of degree 2, and two vertices of degree
1. However, unless $m_1=m_2=m_3$, there are different ways to
distribute these multiplicities on the Hubbard tree, giving
inequivalent maps. Furthermore, in general there are different choices
for the ``turn angle'' at the vertex of degree 2. The different
inequivalent reduced topological Tischler trees encode all these different
choices.

For branch data $(1,1,1)$ there is a unique associated reduced topological Tischler tree,
and an associated anti-polynomial $f$ of degree 4 with real
coefficients, and one real and two complex conjugate critical
points.

For branch data $(2,1,1)$ there are four associated reduced topological Tischler trees,
and correspondingly four different critically fixed
anti-polynomials. Two of them have the double critical point at the
vertex of degree 2 in the Hubbard tree. For one of them, the two edges
in the Hubbard tree are not adjacent in the cyclic order in the respective 
Tischler tree. This
corresponds to an angle of $180^\circ$ between them in the angled
Hubbard tree. In the other one, they are adjacent, corresponding to a
$90^\circ$ angle between them. The two other Tischler trees have the
double critical point as a leaf in the Hubbard tree. This means
that the angle at the vertex of degree $2$ in the Hubbard tree is
$120^\circ$. However, because the Hubbard tree is not symmetric with
respect to interchanging the leaves (since one is a simple and the
other a double critical point), the two different orientations of this
angle give inequivalent reduced topological Tischler trees. These are the first examples in
this list which can not be realized as an anti-polynomials with real
coefficients.

\begin{figure}[ht]
  \centering
  \begin{tikzpicture}
    \begin{scope}
      \coordinate (A1) at (0.3,0);
      \coordinate (A2) at (-0.2,0.4);
      \coordinate (A3) at (-0.2,-0.4);
      \coordinate (B) at (0:1);
      \coordinate (C) at (72:1);
      \coordinate (D) at (144:1);
      \coordinate (E) at (-72:1);
      \coordinate (F) at (-144:1);
      \draw[very thick] (A1) -- (A2);
      \draw[very thick] (A1) -- (A3);
      \draw (A1) -- (B);
      \draw (A2) -- (C);
      \draw (A2) -- (D);
      \draw (A3) -- (E);
      \draw (A3) -- (F);
      \fill (A1) circle [radius=1mm];
      \fill (A2) circle [radius=1mm];
      \fill (A3) circle [radius=1mm];
      \draw[dashed] (0,0) circle [radius=1];
      \node at (0,-1.4) {$(1,1,1)$};      
    \end{scope}
    \begin{scope}[xshift=3cm]
      \coordinate (A1) at (0.5,0);
      \coordinate (A2) at (0,0);
      \coordinate (A3) at (-0.5,0);
      \coordinate (B) at (30:1);
      \coordinate (C) at (-30:1);
      \coordinate (D) at (90:1);
      \coordinate (E) at (-90:1);
      \coordinate (F) at (150:1);
      \coordinate (G) at (-150:1);
      \draw[very thick] (A1) -- (A2) -- (A3);
      \draw (A1) -- (B);
      \draw (A1) -- (C);
      \draw (A2) -- (D);
      \draw (A2) -- (E);
      \draw (A3) -- (F);
      \draw (A3) -- (G);
      \fill (A1) circle [radius=1mm];
      \fill (A2) circle [radius=1mm];
      \fill (A3) circle [radius=1mm];
      \draw[dashed] (0,0) circle [radius=1];
      \node at (0,-1.4) {$(2,1,1)$};
    \end{scope}
    \begin{scope}[xshift=6cm]
      \coordinate (A1) at (-0.2,0.4);
      \coordinate (A2) at (0.25,0);
      \coordinate (A3) at (-0.2,-0.4);
      \coordinate (B) at (30:1);
      \coordinate (C) at (90:1);
      \coordinate (D) at (150:1);
      \coordinate (E) at (-150:1);
      \coordinate (F) at (-90:1);
      \coordinate (G) at (-30:1);
      \draw[very thick] (A1) -- (A2) -- (A3);
      \draw (A1) -- (C);
      \draw (A1) -- (D);
      \draw (A2) -- (B);
      \draw (A2) -- (G);
      \draw (A3) -- (E);
      \draw (A3) -- (F);
      \fill (A1) circle [radius=1mm];
      \fill (A2) circle [radius=1mm];
      \fill (A3) circle [radius=1mm];
      \draw[dashed] (0,0) circle [radius=1];
      \node at (0,-1.4) {$(2,1,1)$};
    \end{scope}
    \begin{scope}[xshift=9cm]
      \coordinate (A1) at (0.5,0);
      \coordinate (A2) at (-0.1,0);
      \coordinate (A3) at (-0.5,-0.3);
      \coordinate (B) at (0:1);
      \coordinate (C) at (60:1);
      \coordinate (D) at (120:1);
      \coordinate (E) at (180:1);
      \coordinate (F) at (240:1);
      \coordinate (G) at (300:1);
      \draw[very thick] (A1) -- (A2) -- (A3);
      \draw (A1) -- (B);
      \draw (A1) -- (C);
      \draw (A1) -- (G);
      \draw (A2) -- (D);
      \draw (A3) -- (E);
      \draw (A3) -- (F);
      \fill (A1) circle [radius=1mm];
      \fill (A2) circle [radius=1mm];
      \fill (A3) circle [radius=1mm];
      \draw[dashed] (0,0) circle [radius=1];
      \node at (0,-1.4) {$(2,1,1)$};
    \end{scope}
    \begin{scope}[xshift=12cm]
      \coordinate (A1) at (0.5,0);
      \coordinate (A2) at (-0.1,0);
      \coordinate (A3) at (-0.5,0.3);
      \coordinate (B) at (0:1);
      \coordinate (C) at (60:1);
      \coordinate (D) at (120:1);
      \coordinate (E) at (180:1);
      \coordinate (F) at (240:1);
      \coordinate (G) at (300:1);
      \draw[very thick] (A1) -- (A2) -- (A3);
      \draw (A1) -- (B);
      \draw (A1) -- (C);
      \draw (A1) -- (G);
      \draw (A2) -- (F);
      \draw (A3) -- (D);
      \draw (A3) -- (E);
      \fill (A1) circle [radius=1mm];
      \fill (A2) circle [radius=1mm];
      \fill (A3) circle [radius=1mm];
      \draw[dashed] (0,0) circle [radius=1];
      \node at (0,-1.4) {$(2,1,1)$};
    \end{scope}
  \end{tikzpicture}
  \caption{Reduced topological Tischler trees with three vertices up to degree 5. The
    first three are $\R$-symmetric, the last two are mirror images
    under complex conjugation.}
\end{figure}

\textbf{Case} $\bm{r=4}$.
For branch data $(1,1,1,1)$ we have four different reduced topological Tischler trees. One
of them has a Hubbard tree with a vertex of degree 3 and 3 leaves, the
other three have two Hubbard vertices of degree 2 and 2 leaves, and
only differ in their turning angles at the degree 2 leaves. In this
case, the Hubbard tree is symmetric with respect to switching the
leaves, but the two trees with opposite $120^\circ$ degree turns are
not equivalent by an orientation-preserving homeomorphism, which again
leads to two different Möbius conjugacy classes which are
anti-Möbius conjugate.

\begin{figure}[ht]
  \begin{tikzpicture}
    \begin{scope}
      \coordinate (A0) at (0,0);
      \coordinate (A1) at (0:0.58);
      \coordinate (A2) at (120:0.58);
      \coordinate (A3) at (240:0.58);
      \coordinate (B) at (30:1);
      \coordinate (C) at (-30:1);
      \coordinate (D) at (90:1);
      \coordinate (E) at (-90:1);
      \coordinate (F) at (150:1);
      \coordinate (G) at (-150:1);
      \draw[very thick] (A2) -- (A0) -- (A1);
      \draw[very thick] (A0) -- (A3);
      \draw (A1) -- (B);
      \draw (A1) -- (C);
      \draw (A2) -- (D);
      \draw (A3) -- (E);
      \draw (A2) -- (F);
      \draw (A3) -- (G);
      \fill (A0) circle [radius=1mm];
      \fill (A1) circle [radius=1mm];
      \fill (A2) circle [radius=1mm];
      \fill (A3) circle [radius=1mm];
      \draw[dashed] (0,0) circle [radius=1];
      \node at (0,-1.4) {$(1,1,1,1)$};
    \end{scope}
    \begin{scope}[xshift=3cm]
      \coordinate (A0) at (0.3,0.5);
      \coordinate (A1) at (-0.2,0.2);
      \coordinate (A2) at (-0.2,-0.2);
      \coordinate (A3) at (0.3,-0.5);
      \coordinate (B) at (30:1);
      \coordinate (C) at (-30:1);
      \coordinate (D) at (90:1);
      \coordinate (E) at (-90:1);
      \coordinate (F) at (150:1);
      \coordinate (G) at (-150:1);
      \draw[very thick] (A0) -- (A1) -- (A2) -- (A3);
      \draw (A0) -- (B);
      \draw (A3) -- (C);
      \draw (A0) -- (D);
      \draw (A3) -- (E);
      \draw (A1) -- (F);
      \draw (A2) -- (G);
      \fill (A0) circle [radius=1mm];
      \fill (A1) circle [radius=1mm];
      \fill (A2) circle [radius=1mm];
      \fill (A3) circle [radius=1mm];
      \draw[dashed] (0,0) circle [radius=1];
      \node at (0,-1.4) {$(1,1,1,1)$};
    \end{scope}
    \begin{scope}[xshift=6cm]
      \coordinate (A0) at (0.2,0.5);
      \coordinate (A1) at (-0.2,0.2);
      \coordinate (A2) at (0.2,-0.2);
      \coordinate (A3) at (-0.2,-0.5);
      \coordinate (B) at (30:1);
      \coordinate (C) at (-30:1);
      \coordinate (D) at (90:1);
      \coordinate (E) at (-90:1);
      \coordinate (F) at (150:1);
      \coordinate (G) at (-150:1);
      \draw[very thick] (A0) -- (A1) -- (A2) -- (A3);
      \draw (A0) -- (B);
      \draw (A2) -- (C);
      \draw (A0) -- (D);
      \draw (A3) -- (E);
      \draw (A1) -- (F);
      \draw (A3) -- (G);
      \fill (A0) circle [radius=1mm];
      \fill (A1) circle [radius=1mm];
      \fill (A2) circle [radius=1mm];
      \fill (A3) circle [radius=1mm];
      \draw[dashed] (0,0) circle [radius=1];
      \node at (0,-1.4) {$(1,1,1,1)$};
    \end{scope}
    \begin{scope}[xshift=9cm, xscale=-1]
      \coordinate (A0) at (0.2,0.5);
      \coordinate (A1) at (-0.2,0.2);
      \coordinate (A2) at (0.2,-0.2);
      \coordinate (A3) at (-0.2,-0.5);
      \coordinate (B) at (30:1);
      \coordinate (C) at (-30:1);
      \coordinate (D) at (90:1);
      \coordinate (E) at (-90:1);
      \coordinate (F) at (150:1);
      \coordinate (G) at (-150:1);
      \draw[very thick] (A0) -- (A1) -- (A2) -- (A3);
      \draw (A0) -- (B);
      \draw (A2) -- (C);
      \draw (A0) -- (D);
      \draw (A3) -- (E);
      \draw (A1) -- (F);
      \draw (A3) -- (G);
      \fill (A0) circle [radius=1mm];
      \fill (A1) circle [radius=1mm];
      \fill (A2) circle [radius=1mm];
      \fill (A3) circle [radius=1mm];
      \draw[dashed] (0,0) circle [radius=1];
      \node at (0,-1.4) {$(1,1,1,1)$};
    \end{scope}
  \end{tikzpicture}
  \caption{Reduced topological Tischler trees with four critical vertices up to mapping degree 5. The first
    two are $\R$-symmetric, and the last two are mirror images under
    complex conjugation.}
\end{figure}

\subsubsection{Critically fixed non-polynomial rational maps}
\label{sec:critically-fixed-non}

Again, we fix a mapping degree $d\geq 3$. In order to census the
reduced (topological) Tischler graphs for critically fixed
anti-rational maps of degree $d$ that are not equivalent to
anti-polynomials, we have to consider branching data $(m_1, \ldots,
m_r)$ with $d-2 \ge m_1 \ge m_2 \ge \ldots \ge m_r \ge 1$ and
$\sum_{k=1}^r m_k = 2d-2$. Below, we list all such possible graphs for
mapping degrees 3 and 4. Similar to anti-polynomials, this
combinatorial census should make it possible to calculate the
corresponding maps numerically.

\textbf{Case} $\bm{d=3}$.  In this case, the only possible
non-anti-polynomial branching data is $(1,1,1,1)$, and the only
respective unobstructed reduced topological Tischler graph is the
complete (simple) plane graph on 4 vertices. The corresponding
critically fixed anti-rational map is the \emph{tetrahedral map}
\begin{equation}\label{eq:tetr_map}
      f(z) = \frac{3\bar{z}^2}{2\bar{z}^3+1}.
\end{equation}

\begin{figure}[ht]
  \centering
  \begin{tikzpicture}
    \begin{scope}
      \coordinate (A) at (0,0);
      \coordinate (B) at (0:1);
      \coordinate (C) at (120:1);
      \coordinate (D) at (240:1);
      \draw (A) -- (B);
      \draw (C) -- (A) -- (D);
      \draw (A) circle [radius=1];
      \fill (A) circle [radius=1mm];
      \fill (B) circle [radius=1mm];
      \fill (C) circle [radius=1mm];
      \fill (D) circle [radius=1mm];
      \node at (0,-1.4) {$(1,1,1,1)$};
    \end{scope}
  \end{tikzpicture}
  \caption{Unique non-anti-polynomial unobstructed reduced topological
    Tischler graph for mapping degree $d=3$, corresponding to the
    tetrahedral map \eqref{eq:tetr_map} and labeled by the respective
    branching data.}
\end{figure}

\textbf{Case} $\bm{d=4}$.  Figure~\ref{fig:tischler-graphs-deg-4}
shows all the unobstructed reduced topological Tischler graphs for
mapping degree 4 that do not come from anti-polynomials. Note that the
first two graphs, with branching data $(2,2,2)$ and $(2,2,1,1)$,
correspond to the complex conjugates of critically fixed rational
maps, since these graphs are $\R$-symmetric and have all their
critical vertices arranged on the real line. Möbius conjugates of
these two critically fixed rational maps are explicitly calculated in
\cite[Sec.~11]{cordwellClassificationCriticallyFixed2015}.

All but two of the graphs in Figure~\ref{fig:tischler-graphs-deg-4}
are $\R$-symmetric (namely, except for the two Z-shaped graphs with
branching data $(2,2,1,1)$), so they correspond to critically fixed
anti-rational maps with real coefficients. The unique graph with
branching data $(1,1,1,1,1,1)$ is a \emph{prism graph}.

\begin{figure}[ht]
  \centering
  \begin{tikzpicture}
    \begin{scope}
      \coordinate (A) at (0,0);
      \coordinate (B) at (1,0);
      \coordinate (C) at (-1,0);
      \draw (A) to [bend left=60] (B);
      \draw (A) to [bend right=60] (B);
      \draw (A) to [bend left=60] (C);
      \draw (A) to [bend right=60] (C);
      \draw (A) circle [radius=1];
      \fill (A) circle [radius=1mm];
      \fill (B) circle [radius=1mm];
      \fill (C) circle [radius=1mm];
      \node at (0,-1.4) {$(2,2,2)$};
    \end{scope}
    \begin{scope}[xshift=3cm]
      \coordinate (A) at (0,0);
      \coordinate (B) at (-1,0);
      \coordinate (C) at (-0.3,0);
      \coordinate (D) at (0.3,0);
      \coordinate (E) at (1,0);
      \draw (B) to [bend left=90] (C);
      \draw (B) to [bend right=90] (C);
      \draw (D) to [bend left=90] (E);
      \draw (D) to [bend right=90] (E);
      \draw (C) -- (D);
      \draw (A) circle [radius=1];
      \fill (B) circle [radius=1mm];
      \fill (C) circle [radius=1mm];
      \fill (D) circle [radius=1mm];
      \fill (E) circle [radius=1mm];
      \node at (0,-1.4) {$(2,2,1,1)$};
    \end{scope}
    \begin{scope}[xshift=6cm]
      \coordinate (A) at (0,0);
      \coordinate (B) at (-1,0);
      \coordinate (C) at (0,.4);
      \coordinate (D) at (0,-.4);
      \coordinate (E) at (1,0);
      \draw (B) -- (C) -- (E) -- (D) -- (B);
      \draw (C) -- (D);
      \draw (A) circle [radius=1];
      \fill (B) circle [radius=1mm];
      \fill (C) circle [radius=1mm];
      \fill (D) circle [radius=1mm];
      \fill (E) circle [radius=1mm];
      \node at (0,-1.4) {$(2,2,1,1)$};
    \end{scope}
    \begin{scope}[xshift=9cm]
      \coordinate (A) at (0,0);
      \coordinate (B) at (30:1);
      \coordinate (C) at (150:1);
      \coordinate (D) at (-150:1);
      \coordinate (E) at (-30:1);
      \draw (C) -- (B) -- (D) -- (E);
      \draw (A) circle [radius=1];
      \fill (B) circle [radius=1mm];
      \fill (C) circle [radius=1mm];
      \fill (D) circle [radius=1mm];
      \fill (E) circle [radius=1mm];
      \node at (0,-1.4) {$(2,2,1,1)$};
    \end{scope}
    \begin{scope}[xshift=12cm]
      \coordinate (A) at (0,0);
      \coordinate (B) at (30:1);
      \coordinate (C) at (150:1);
      \coordinate (D) at (-150:1);
      \coordinate (E) at (-30:1);
      \draw (B) -- (C) -- (E) -- (D);
      \draw (A) circle [radius=1];
      \fill (B) circle [radius=1mm];
      \fill (C) circle [radius=1mm];
      \fill (D) circle [radius=1mm];
      \fill (E) circle [radius=1mm];
      \node at (0,-1.4) {$(2,2,1,1)$};
    \end{scope}
    \begin{scope}[xshift=0cm, yshift=-3cm]
      \coordinate (A) at (-1,0);
      \coordinate (B) at (-.3,0);
      \coordinate (C) at (.3,.3);
      \coordinate (D) at (.3,-.3);
      \coordinate (E) at (1,0);
      \draw (A) -- (B) -- (C) -- (E) -- (D) -- (B);
      \draw (C) -- (D);
      \draw (0,0) circle [radius=1];
      \fill (A) circle [radius=1mm];
      \fill (B) circle [radius=1mm];
      \fill (C) circle [radius=1mm];
      \fill (D) circle [radius=1mm];
      \fill (E) circle [radius=1mm];
      \node at (0,-1.4) {$(2,1,1,1,1)$};
    \end{scope}
    \begin{scope}[xshift=3cm, yshift=-3cm]
      \coordinate (A) at (0,0);
      \coordinate (B) at (0:1);
      \coordinate (C) at (90:1);
      \coordinate (D) at (180:1);
      \coordinate (E) at (270:1);
      \draw (B) -- (A) -- (C);
      \draw (D) -- (A) -- (E);
      \draw (A) circle [radius=1];
      \fill (A) circle [radius=1mm];
      \fill (B) circle [radius=1mm];
      \fill (C) circle [radius=1mm];
      \fill (D) circle [radius=1mm];
      \fill (E) circle [radius=1mm];
      \node at (0,-1.4) {$(2,1,1,1,1)$};
    \end{scope}
    \begin{scope}[xshift=6cm, yshift=-3cm]
      \coordinate (A) at (0,0);
      \coordinate (B1) at (0:.5);
      \coordinate (C1) at (120:.5);
      \coordinate (D1) at (240:.5);
      \coordinate (B2) at (0:1);
      \coordinate (C2) at (120:1);
      \coordinate (D2) at (240:1);
      \draw (A) circle [radius=.5];
      \draw (A) circle [radius=1];
      \draw (B1) -- (B2);
      \draw (C1) -- (C2);
      \draw (D1) -- (D2);
      \fill (B1) circle [radius=1mm];
      \fill (C1) circle [radius=1mm];
      \fill (D1) circle [radius=1mm];
      \fill (B2) circle [radius=1mm];
      \fill (C2) circle [radius=1mm];
      \fill (D2) circle [radius=1mm];
      \node at (0,-1.4) {$(1,1,1,1,1,1)$};      
    \end{scope}
  \end{tikzpicture}
  \caption{Non-anti-polynomial unobstructed reduced topological Tischler graphs
    for mapping degree 4, labeled by the respective branching data.}
  \label{fig:tischler-graphs-deg-4}
\end{figure}

\subsection{Maps with symmetries}
\label{subsec:maps-with-symmetries}
If $\Gamma$ is a finite group of spherical isometries, then every
element of $\Gamma$ commutes with the antipodal map $\tau(z) =
-1/\bar{z}$, so $f$ is a rational map with $\Gamma$-symmetry if and
only if $g=\tau \circ f$ is an anti-rational map with
$\Gamma$-symmetry. This gives an explicit one-to-one correspondence
between $\Gamma$-symmetric rational and anti-rational maps, under
which critically fixed anti-rational map correspond to ``critically
antipodal'' rational maps, i.e., rational maps which map each critical
point to its antipode. Some of the references given in the following
concern only rational maps, but the results and formulas obviously
directly imply the corresponding results and formulas for
anti-rational maps.

We have already encountered the tetrahedral map as the unique
critically fixed anti-rational map with branching data $(1,1,1,1)$. In
a slightly different normalization, this map arises without the need
to use Thurston's theorem in the following way: Draw a regular
spherical tetrahedron, and in each face $A$, define $f$ to be the
anti-conformal map from $A$ to its complement
$\CC \setminus \overline{A}$, fixing the vertices. By symmetry, these
conformal maps fit together along the edges, resulting in a critically
fixed anti-rational map of degree 3 with 4 simple critical points and
tetrahedral symmetry. The same construction can be carried out for all
Platonic solids. In \cite{doyleSolvingQuinticIteration1989}, these
maps were calculated for the dodecahedron and icosahedron, and this
and other methods are used to explicitly find all rational maps of
degree $d \le 30$ with icosahedral symmetry. (The authors remark that
there is good evidence that these maps were already known to Felix
Klein.)  It turns out that all of these maps are postcritically
finite, and indeed complex conjugates of critically fixed
anti-rational maps. See \cite{lodgeCirclePackingsKissing2022} for
explicit formulas for the remaining Platonic solid, and pictures of
the corresponding Julia sets.

In degree $d=31$, there is a non-trivial one-parameter family of
rational maps with icosahedral symmetry, most of which are not
postcritically finite. In \cite{crassDynamicsSoccerBall2014}, Crass
calculated two critically fixed anti-rational maps of degree 31
explicitly, and conjectured that these are the only such maps with
simple critical points. The associated reduced topological Tischler graphs are a ``soccer
ball'' (truncated icosahedron) $T_1$, and a truncated dodecahedron
$T_2$. It is well-known that the only polyhedra with 32 faces and
icosahedral symmetry are those two and the icosidodecahedron $T_3$,
which has 20 triangular faces and 12 pentagonal faces. However, while
$T_1$ and $T_2$ have 60 vertices, each of degree 3, the
icosidodecahedron has only 30 vertices, each of degree 4. Combining
the classification of polyhedra with
Theorems~\ref{thm:tischler-fixed-correspondence} and
\ref{thm:symmetries}, we have the following, resolving Crass's
conjecture:
\begin{theorem}\label{thm:ico-31}
  There are exactly three critically fixed anti-rational maps $f_k$,
  $k=1,2,3$, of degree $d=31$ with icosahedral symmetry, with
  associated reduced topological Tischler graphs $T_k$. The maps $f_1$ and $f_2$ each have
  60 simple critical points, the map $f_3$ has 30 double critical
  points.
\end{theorem}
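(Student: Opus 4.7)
The plan is to apply Theorems~\ref{thm:tischler-fixed-correspondence} and~\ref{thm:symmetries} to translate the statement into a finite combinatorial classification problem. By those theorems, Möbius conjugacy classes of critically fixed anti-rational maps of degree $d = 31$ with icosahedral symmetry correspond bijectively to equivalence classes of unobstructed topological Tischler graphs $T \subset S^2$ with $d+1 = 32$ faces whose symmetry group contains the full icosahedral group $I_h$ of order $120$.

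First I will analyze the face-orbit structure. The point stabilizers of $I_h$ acting on $S^2$ are $D_5$, $D_3$, $D_2$, $C_s$, or trivial, giving the orbit sizes $12$, $20$, $30$, $60$, and $120$ respectively. Since the $F = 32$ faces of $T$ partition into $I_h$-orbits of such sizes, the unique decomposition is $32 = 12 + 20$. A face in the size-$12$ orbit carries a $D_5$-stabilizer, so its boundary cycle is acted on by a $5$-fold rotation, forcing its length to be $a \in \{5, 10\}$; likewise a face in the size-$20$ orbit has $D_3$-stabilizer and boundary length $b \in \{3, 6\}$. From $2E = 12a + 20b$ and Euler's formula $V - E + F = 2$, the four cases $(a,b) \in \{(5,3), (5,6), (10,3), (10,6)\}$ yield $(V, E) \in \{(30, 60), (60, 90), (60, 90), (90, 120)\}$. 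In the last case, $V = 90$ would have to split as $30 + 60$ (the only valid orbit decomposition), with degrees $r_1, r_2 \ge 3$ satisfying $30 r_1 + 60 r_2 = 240$, i.e.\ $r_1 + 2 r_2 = 8$, which is incompatible with $r_1, r_2 \ge 3$. The remaining three cases each admit a single vertex orbit of common degree $4$, $3$, and $3$ respectively.

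Next I will argue that each surviving combination determines a unique $I_h$-invariant cellulation. The centers of the size-$12$ and size-$20$ face orbits are forced by their stabilizers onto the six $5$-fold and ten $3$-fold rotation axes of $I_h$, and once these centers together with the boundary lengths $(a,b)$ are fixed, the $I_h$-invariant edge set has no remaining freedom. The three resulting cellulations are the icosidodecahedron $T_3$ (vertex configuration $3.5.3.5$), the truncated icosahedron $T_1$ ($5.6.6$), and the truncated dodecahedron $T_2$ ($3.10.10$). Direct inspection of each vertex configuration shows that no two faces share two edges, so all three graphs are unobstructed. By Theorem~\ref{thm:characterization-1}, a vertex of degree $r$ corresponds to a critical point of multiplicity $r - 2$, so $T_1$ and $T_2$ (each with $60$ degree-$3$ vertices) yield maps $f_1$ and $f_2$ with $60$ simple critical points each, while $T_3$ (with $30$ degree-$4$ vertices) yields $f_3$ with $30$ double critical points, consistent with $\sum m_k = 2d - 2 = 60$ in every case.

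The main technical obstacle is the rigidity step in the previous paragraph: one must check that, given $I_h$-symmetry and the face-orbit data, the cellulation is genuinely uniquely determined rather than admitting a continuous or combinatorial deformation. This reduces to verifying that after pinning the face centers to the prescribed rotation axes, there is a single way to partition the edge incidence among the $I_h$-orbits of oriented edges so that each face has its prescribed boundary length. This is a short finite case analysis driven by the transitive action of $D_5$ (respectively $D_3$) on the boundary of a single face in each orbit.
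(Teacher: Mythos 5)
Your overall strategy is the same as the paper's: both reduce the theorem, via Theorems~\ref{thm:tischler-fixed-correspondence} and~\ref{thm:symmetries}, to classifying the icosahedrally symmetric unobstructed topological Tischler graphs with $32$ faces. The difference is that the paper simply invokes the ``well-known'' classification of polyhedra with $32$ faces and icosahedral symmetry, whereas you actually derive it by orbit counting and Euler's formula. That extra detail is welcome, but it contains one genuine gap: the $5$-fold (resp.\ $3$-fold) rotation stabilizing a face only forces $5\mid a$ (resp.\ $3\mid b$), not $a\in\{5,10\}$ and $b\in\{3,6\}$. As written, cases such as $(a,b)=(15,3)$ or $(5,9)$ are never considered, and they are not excluded by divisibility. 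The clean fix is the inequality you never state: since every vertex has degree $\ge 3$, $2E=\sum_v\deg v\ge 3V=3(E-30)$, so $E\le 90$ and hence $12a+20b=2E\le 180$, i.e.\ $3a+5b\le 45$. Together with $5\mid a$ and $3\mid b$ this leaves exactly $(5,3)$, $(5,6)$, $(10,3)$, and in fact disposes of $(10,6)$ immediately, making your separate orbit-decomposition argument for that case unnecessary (though correct).

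Two smaller remarks. First, your reduction of face stabilizers to $D_5$, $D_3$, etc.\ deserves a word: a face is an invariant disk, so its stabilizer acts on a closed disk and is cyclic or dihedral; in particular the order-$10$ subgroups of $I_h$ generated by rotary reflections (which contain the fixed-point-free central inversion) cannot stabilize a face, which is why the order-$10$ stabilizer must be $D_5$ and why the $12$ face ``centers'' land on the $5$-fold poles. Second, you correctly flag the rigidity step (uniqueness of the $I_h$-invariant cellulation for each admissible $(a,b)$) as the remaining technical point and only sketch it; the paper does not carry it out either, resting on the classical classification of symmetric polyhedra, so you are not behind the paper here, but a complete self-contained proof would need that finite check. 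The unobstructedness verification and the translation of vertex degrees $3$ and $4$ into critical multiplicities $1$ and $2$ via Theorem~\ref{thm:characterization-1} are correct.
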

This conjecture had been verified numerically by Crass, who continued
his investigation in
\cite{crassCriticallyFiniteDynamicsIcosahedron2020} to numerically
find all rational maps of degre 31 with icosahedral symmetry and
periodic critical points.  Obviously, analogues of
Theorem~\ref{thm:ico-31} hold for any given degree $d$ and symmetry
group $\Gamma$. In general, the $\Gamma$-symmetric critically fixed
anti-rational maps are classified by $\Gamma$-symmetric polyhedra with
$d+1$ faces.


\bibliographystyle{amsalpha}
\bibliography{classification-zotero}

\listoffixmes

\end{document}
